\documentclass[12pt,a4paper]{article}
\usepackage[utf8]{inputenc}
\usepackage{amsmath, amssymb, amsthm}
\usepackage{bbm}
\usepackage{geometry}
\usepackage{xcolor}
\usepackage{graphicx}
\usepackage{float}
\usepackage{subcaption}  
\usepackage{comment}

\newtheorem{theorem}{Theorem}[section]
\newtheorem{proposition}[theorem]{Proposition}
\newtheorem{lemma}[theorem]{Lemma}

\newtheorem{definition}[theorem]{Definition}
\newtheorem{remark}[theorem]{Remark}

\newcommand{\R}{\mathbb{R}}

\definecolor{verde}{RGB}{0,150,0}

\title{The pointwise multiscale texture operator: analytical foundations and functional characterization}
\author{Carlos Fernández García \& Zulima Fernández Mu\~niz}
\date{\today}

\begin{document}
	
	\maketitle
	
	\begin{abstract}

	Texture is commonly described through statistical descriptors, filter-bank responses, or multiscale representations, yet its functional-analytic structure remains largely unexplored. In this work we investigate texture from the viewpoint of Gaussian scale-space evolution, interpreting it as the persistence of local structures across scales. This perspective naturally leads to a multiscale texture operator that quantifies the evolution of local image structures under Gaussian diffusion.
	We establish the fundamental analytical properties of this operator, proving its boundedness on $L^p(\mathbb{R}^n)$, its infinite-order smoothing effect in the Hilbert setting, and an explicit spectral characterization showing that it acts as a localized Gaussian band-pass filter. Motivated by the logarithmic organization of Gaussian scales, we introduce an $r$-adic Difference-of-Gaussians discretization and prove Littlewood--Paley-type stability estimates together with an explicit reconstruction formula.
	Building on this multiscale representation, we define multiscale texture norms that quantify the persistence of local structures across scales. These norms induce a natural family of Banach spaces, with a distinguished Hilbertian case, and we prove that they provide an equivalent characterization of the classical Besov and Sobolev scales. Consequently, classical functional regularity can be described intrinsically through the persistence of texture under Gaussian scale evolution.
	These results establish the first functional-analytic characterization of texture persistence under Gaussian scale evolution, revealing that classical Besov and Sobolev regularity admit an equivalent description in terms of multiscale texture persistence. This provides a rigorous bridge between Gaussian scale-space theory, harmonic analysis, and variational models based on texture.
	
\end{abstract}
	
	\section{Introduction}
	
		When we observe a function or an image, we are able to extract a remarkable amount of geometric and structural information. This naturally raises a fundamental question: where is this information actually encoded?
		
		A first answer would be to claim that the information is contained in the pointwise values of the function itself. Experience in image analysis, however, has shown that this viewpoint is often insufficient: two images whose values differ only slightly may represent completely different objects, while images with very different intensities may share essentially the same geometric structure.
		
		This observation led to a change of perspective in which derivatives, and particularly the gradient, acquired a central role. Edges, discontinuities, and sharp intensity transitions contain a substantial part of the visual and geometric information carried by an image, and much of modern image processing theory is built upon this principle.
		
		Nevertheless, even a description based on derivatives is often unable to capture certain structural phenomena. In many applications, the relevant information is not associated solely with edges or discontinuities, but also with oscillatory patterns, local regularities, and spatial organizations that are commonly referred to collectively as texture.
		
		The central idea underlying this work is that the evolution of local structures across scales carries information that is not accessible from pointwise values or spatial derivatives alone. We propose to interpret texture through the persistence of contextual structures under Gaussian scale evolution, thereby introducing a multiscale representation in which texture is viewed as a manifestation of structural persistence rather than as a collection of local statistical or frequency-based features.
		
		This perspective naturally leads to a family of operators associated with Gaussian scale evolution and, more importantly, to a functional framework in which persistence becomes the fundamental analytical quantity. As will be shown throughout the paper, this viewpoint establishes direct connections with harmonic analysis, Littlewood--Paley theory, and the classical Besov and Sobolev scales, while providing a mathematically rigorous interpretation of multiscale texture persistence.
		
		\subsection{Motivation from texture analysis and imaging}
		
			Texture is a core structural component in image analysis. Together with shape, color, and motion, it is one of the main visual cues that allow a human or artificial observer to segment, recognize, and interpret a scene. From the fine textures of biological tissues in medical imaging to the regular patterns of industrial surfaces or the subtle variations appearing in remote sensing data, texture information plays an important role in many applications.
			
			Despite its importance, texture has no universally accepted mathematical definition. Intuitively, a texture may be understood as a local repetition of an elementary pattern, possibly accompanied by gradual variations in orientation, scale, or contrast. However, this intuitive notion is hard to formalize. What exactly constitutes a pattern? How should repetition be measured? Over which range of scales should this repetition occur?
			
			Many approaches have been proposed over the past decades for texture analysis. Statistical methods describe texture through local distributions and co-occurrence patterns, filter-bank methods rely on multiscale frequency decompositions, and wavelet methods provide efficient multiresolution representations. More recently, deep learning techniques have shown strong performance in texture classification and segmentation tasks.
			
			Despite their differences, most existing approaches share a common feature: they describe texture either through local statistics or through responses to families of filters. While highly successful in practice, these methods often give little insight into why a structure is perceived as a texture, and their link to classical regularity notions is often unclear.
			
			This leads naturally to the question: can texture be described through the persistence of structures across scales?
			
			The approach developed here builds on the observation that textures are neither purely local nor purely global objects. Rather, they emerge from the way local contextual structures evolve under changes of observation scale. Persistent structures generate slowly varying multiscale trajectories, whereas unstable oscillatory configurations disappear rapidly under Gaussian diffusion. In this sense, texture is interpreted as the observable manifestation of persistence across scales rather than as a purely statistical or frequency-based phenomenon.
			
			This perspective naturally leads to the study of Gaussian scale-space evolution, where each point is represented by its entire scale trajectory rather than by isolated responses at prescribed resolutions. The remainder of the paper develops the analytical framework generated by this evolution, including the associated persistence operator, its spectral properties, its $r$-adic multiscale decomposition, and the functional structures naturally induced by these multiscale trajectories.
			
			From the point of view of harmonic analysis and coorbit theory, the multiscale decomposition induced by $\tau$ can be placed within the general framework of Besov–Lizorkin–Triebel spaces as developed by Ullrich and others; this connection will be made explicit in Section~\ref{sec:spaces}.
			
		\subsection{The pointwise multiscale texture operator and its relation to existing frameworks}
		
			The construction proposed in this work originates from Gaussian scale-space theory \cite{witkin,koenderink,lindeberg}, a widely studied multiscale representation in image analysis. Given a function $u:\mathbb{R}^n\to\mathbb{R}$, we define its scale-space evolution by
			\begin{equation*}
				U(z,\sigma) = (u*\gamma_\sigma)(z) = \int_{\mathbb{R}^n}  u(y)\,\gamma_\sigma(z-y)\,dy,
			\end{equation*}
			where $\gamma_\sigma$ denotes the Gaussian kernel with variance $\sigma^2$.
			
			For each point $z$ in the domain, the function $U(z,\cdot)$ describes how the local Gaussian average of $u$ around $z$ evolves as the observation scale changes. This one-parameter family, known as the Gaussian scale-space representation \cite{witkin,koenderink,lindeberg}, satisfies fundamental properties such as linearity, isotropy, causality, and the diffusion equation
			\begin{equation*}
				\partial_\sigma U = \sigma\Delta U.
			\end{equation*}
			
			Rather than regarding the scale parameter merely as an external observation parameter, we treat it as an additional analytical variable. Consequently, each point is no longer represented by a single value but by an entire trajectory in Gaussian scale-space, describing the evolution of its local contextual identity across scales.
			
			The trajectory $U(z,\cdot)$ contains the complete multiscale evolution of the local contextual information around the point $z$. Our interest is not only the trajectory itself but also the rate at which it evolves as the observation scale changes. This naturally leads to the definition of the pointwise multiscale texture operator
			\begin{equation*}
				\tau_{z,u}(\sigma) = \partial_\sigma U(z,\sigma).
			\end{equation*}
			
			Since $\tau_{z,u}(\sigma)$ is the derivative of the contextual multiscale signature with respect to the diffusion scale, it measures the instantaneous rate at which the local contextual identity changes under Gaussian evolution. Structures whose contextual organization remains essentially unchanged across scales generate small responses, whereas structures that rapidly disappear or reorganize under diffusion produce larger values of $\tau$. In this sense, the operator provides a quantitative measure of multiscale persistence, while texture is interpreted as the observable manifestation of this persistence across scales.
			
			\medskip\noindent\textbf{Relation with scale-space theory and diffusion.}
			
			Using the diffusion identity
			\begin{equation*}
				\partial_\sigma\gamma_\sigma = \sigma\Delta\gamma_\sigma,
			\end{equation*}
			we obtain the equivalent representation
			\begin{equation*}
				\tau_{z,u}(\sigma) = (u*\Phi_\sigma)(z), \qquad \Phi_\sigma =  \sigma\Delta\gamma_\sigma.
			\end{equation*}
			
			Consequently, the texture operator can be viewed as a convolution with an explicit kernel of zero mean. Although each fixed-scale response admits this convolutional representation, the object studied throughout this paper is the complete scale-dependent family generated by Gaussian evolution rather than isolated responses at prescribed scales. As will be shown later, the kernel acts as a localized band-pass filter and provides a natural mechanism for detecting oscillatory structures at their intrinsic scales.
			
			\medskip\noindent\textbf{Relation with DoG and LoG filters.}
			
			For each fixed observation scale, the operator is closely related to the classical Difference-of-Gaussians (DoG) and Laplacian-of-Gaussian (LoG) filters \cite{lindeberg,mallat}.
			
			Indeed, for nearby scales one has
			\begin{equation*}
				U(z,\sigma_2)-U(z,\sigma_1) \approx \tau_{z,u}(\sigma)\,(\sigma_2-\sigma_1),
			\end{equation*} 
			showing that $\tau$ may be interpreted as the continuous-scale limit of a family of DoG filters. This establishes a direct analytical connection with classical multiscale filtering. However, unlike DoG or LoG methods, which are typically employed through responses at selected scales, the present framework studies the complete evolution of the scale derivative as a function of the diffusion parameter. In this sense, the operator provides an analytically well-defined realization of a widely used multiscale construction while extending it to a genuinely functional setting.
			
			\medskip\noindent\textbf{Relation with multiscale decompositions.}
			
			A geometric discretization of the scale variable with ratio $r>1$ leads naturally to a family of Difference-of-Gaussians filters satisfying dilation relations and constant relative bandwidth properties \cite{mallat}. Beyond its computational convenience, this discretization provides a natural bridge between continuous Gaussian scale evolution and classical multiresolution analysis. As will be shown later, the resulting system yields stable multiscale representations together with reconstruction formulas and Littlewood--Paley-type estimates, thereby connecting texture persistence with the analytical framework of harmonic analysis.
			
			\medskip\noindent\textbf{Relation with previous work.}
			
			A first systematic use of the operator appeared in \cite{fernandez2}, where $\tau$ was introduced as a pointwise multiscale texture descriptor and successfully applied to the segmentation of 3D medical images. That work established the practical relevance of the operator, while leaving open the analytical questions concerning its functional, spectral, and multiscale structure.
			
			More recently, the diffusion--persistence variational model proposed in \cite{fernandez1} incorporated $\tau$ into a variational framework for texture-preserving image restoration, showing that multiscale persistence can also serve as a regularization principle. The emphasis there was placed on the variational model and its numerical behaviour rather than on the mathematical analysis of the operator itself.
			
			The purpose of the present article is to develop the mathematical foundations underlying these previous constructions. More precisely, we study the analytical properties of the texture operator, establish its spectral characterization, derive its associated $r$-adic multiscale decomposition, and investigate the functional structures naturally induced by this representation. A central result of the paper is that the $r$-adic DoG decomposition gives rise to a family of multiscale texture spaces $\mathcal T^{s,p,q}(\mathbb R^n)$ that fit naturally within the  classical Besov scale and, in the Hilbertian case, provide an equivalent realization of Sobolev regularity expressed entirely through texture responses across scales.

		\subsection{Main contributions}
		
			The present article is devoted to a systematic analytical study of the pointwise multiscale texture operator $\tau_{z,u}(\sigma)$ and of the multiscale structures induced by its associated Gaussian scale-space representation. The main contributions of the paper can be summarized as follows.
			
			\medskip
			
			\noindent\textbf{1. Analytical properties of the texture operator.}
			
			We establish boundedness and regularization properties for the operator $\tau$. In particular, for every fixed scale $\sigma>0$, the mapping
			\begin{equation*}
				u \mapsto \tau_{\cdot,u}(\sigma)
			\end{equation*}
			is shown to be bounded on $L^p(\mathbb R^n)$ for all $1\le p\le\infty$. In the Hilbert setting, the operator exhibits an infinite-order smoothing effect, mapping $L^2(\mathbb R^n)$ continuously into $H^k(\mathbb R^n)$ for every $k\ge0$.
			
			\medskip
			
			\noindent\textbf{2. Spectral characterization and band-pass behavior.}
			
			We derive an explicit expression for the Fourier symbol of the kernel associated with $\tau$ and analyze its spectral properties. The resulting operator acts as a localized band-pass filter whose characteristic frequency is proportional to the inverse observation scale and whose symbol decays exponentially at high frequencies. This provides a mathematical explanation of the ability of $\tau$ to detect oscillatory structures at their intrinsic scales.
			
			\medskip
			
			\noindent\textbf{3. $r$-adic discretization and multiscale decomposition.}
			
			We introduce a geometric discretization of the scale variable through an $r$-adic progression and study the associated Difference-of-Gaussians family. The resulting system satisfies dilation relations, possesses constant relative bandwidth, and yields stable multiscale representations together with reconstruction and Littlewood--Paley-type estimates in $L^2(\mathbb R^n)$.
			
			\medskip
			
			\noindent\textbf{4. Hilbert texture spaces and connection with Sobolev regularity.}
			
			Motivated by the previous multiscale analysis, we introduce the Hilbert texture spaces $\mathcal T^{s}(\mathbb R^n)$ and characterize them through weighted texture coefficients. We prove that these spaces are Hilbert spaces and identify them with the classical Sobolev spaces $H^s(\mathbb R^n)$ up to equivalence of norms. This characterization provides an alternative realization of Sobolev regularity entirely expressed in terms of multiscale texture responses.
			
			\medskip
			
			\noindent\textbf{5. A functional framework for texture persistence.}
			
			Taken together, the previous results provide a rigorous mathematical framework for the notion of texture persistence across scales. They establish direct connections between Gaussian scale-space theory, multiscale texture analysis, harmonic analysis, and classical regularity theory, thereby supplying the analytical foundations underlying the persistence-based methodologies introduced in \cite{fernandez1,fernandez2}.
			
			\medskip
			
			\noindent
			In this sense, the present article provides the mathematical foundations of the pointwise multiscale texture operator introduced in \cite{fernandez2}. Beyond the individual analytical results, it shows that texture persistence admits a natural formulation within Gaussian scale-space theory and classical harmonic analysis.
			
			The logical development of the paper is summarized in Figure~\ref{fig:pipeline}.
			
			\begin{figure}[H]
				\centering
				\includegraphics[width=0.65\textwidth]{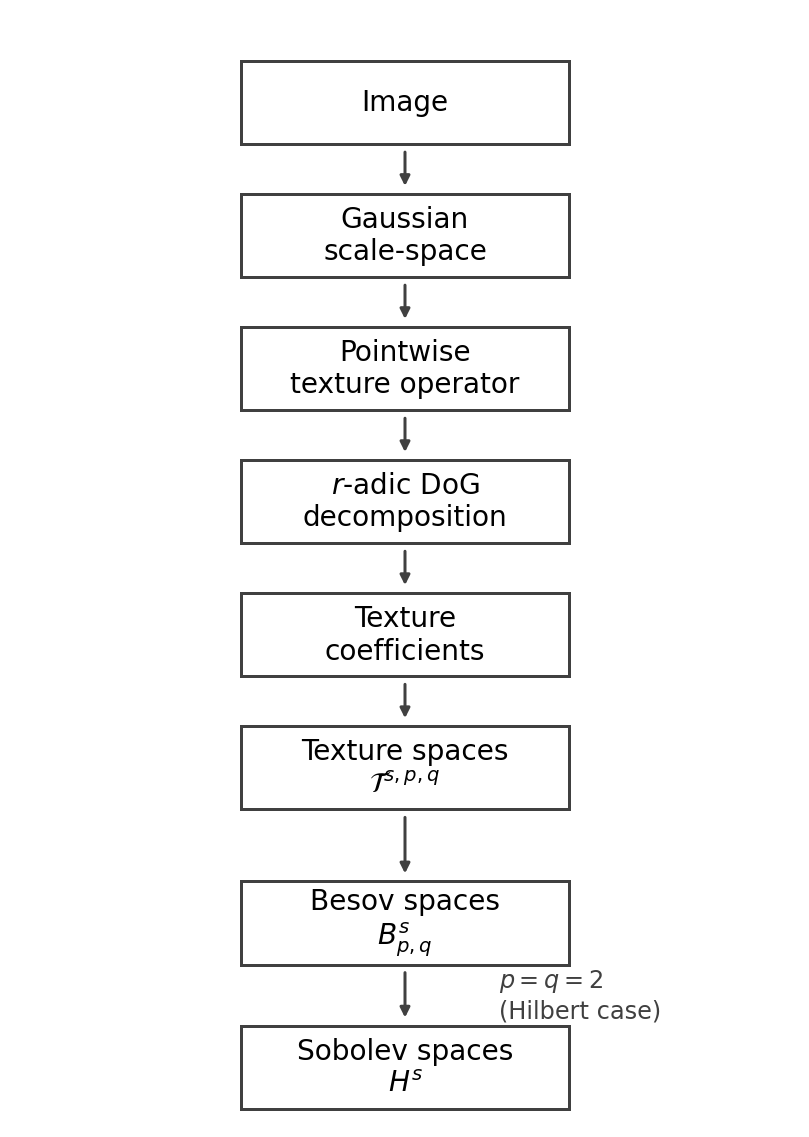}
				\caption{Construction of the multiscale texture framework developed in this paper. Starting from the Gaussian scale-space representation, the pointwise texture operator induces an $r$-adic Difference-of-Gaussians decomposition, whose texture coefficients define the multiscale texture spaces $\mathcal{T}^{s,p,q}(\mathbb{R}^n)$. This space admit a description in terms of the classical Besov scale, while preserving a geometric interpretation in terms of texture persistence. In the Hilbertian case $(p=q=2)$, this characterization recovers the Sobolev spaces.}
				\label{fig:pipeline}
			\end{figure}
		
		\subsection{Organization of the paper}
		
			The remainder of the paper is organized as follows. In Section~\ref{sec:preliminares} we introduce the notation and present a number of illustrative examples of the texture operator. Section~\ref{sec:basicproperties} is devoted to its basic analytical properties, including boundedness and regularization results. In Section~\ref{sec:spectral} we study its spectral representation and band-pass behavior. Section~\ref{sec:radic} introduces the $r$-adic discretization and the associated multiscale decomposition. Section~\ref{sec:spaces} develops the Hilbert texture spaces associated with the multiscale decomposition and establishes their equivalence with the classical Sobolev spaces. Finally, Section~\ref{sec:conclusions} contains conclusions and several directions for future research.
	
	\section{Preliminaries and motivating examples}\label{sec:preliminares}
	
		Before undertaking the functional-analytic study of the texture operator, we fix the notation used throughout the paper and present some simple examples that help illustrate the qualitative behavior of the operator.
		
		\subsection{Notation and basic functional-analytic tools}
		
			We denote by $\mathbb{R}^n$ the Euclidean space of dimension $n$, where $n=2$ for standard images, although the results are established for any $n\ge 1$. Spatial variables are denoted by $z, x, y \in \mathbb{R}^n$, while $\sigma > 0$ represents the scale parameter.
			
			For $1 \le p \le \infty$, $L^p(\mathbb{R}^n)$ is the space of measurable functions $u:\mathbb{R}^n\to\mathbb{R}$ such that
			\begin{equation*}
				\|u\|_{L^p} = \left( \int_{\mathbb{R}^n} |u(x)|^p dx \right)^{1/p} < \infty
			\end{equation*}
			(with the usual modification using the essential supremum for $p=\infty$).
			
			Given a kernel $\Phi \in L^1(\mathbb{R}^n)$, the convolution of $u$ with $\Phi$ is defined as
			\begin{equation*}
				(u * \Phi)(z) = \int_{\mathbb{R}^n} u(y) \Phi(z-y) \, dy.
			\end{equation*}
			Young's inequality states that for $1 \le p \le \infty$,
			\begin{equation*}
				\|u * \Phi\|_{L^p} \le \|u\|_{L^p} \|\Phi\|_{L^1}.
			\end{equation*}
			
			We denote by $\mathcal{S}(\mathbb{R}^n)$ the space of rapidly decreasing functions (Schwartz space) and by $\mathcal{S}'(\mathbb{R}^n)$ its dual, the space of tempered distributions. The Fourier transform is defined as
			\begin{equation*}
				\hat{u}(\xi) = \int_{\mathbb{R}^n} u(x) e^{-i x\cdot\xi} \, dx,
			\end{equation*}
			although in this paper we will use a normalization constant adapted to the expression of the Gaussian kernel.
			
			The Gaussian kernel (or heat kernel) in $\mathbb{R}^n$ is
			\begin{equation*}
				\gamma_{\sigma}(x) = \frac{1}{(2\pi\sigma^2)^{n/2}} \exp\left(-\frac{|x|^2}{2\sigma^2}\right), \quad \sigma > 0.
			\end{equation*}
			This kernel satisfies $\int_{\mathbb{R}^n} \gamma_{\sigma}(x) dx = 1$ and its Fourier transform is
			\begin{equation*}
				\hat{\gamma}_{\sigma}(\xi) = e^{-\frac{\sigma^2|\xi|^2}{2}}.
			\end{equation*}
			
			The Gaussian scale-space of a function $u \in L^1(\mathbb{R}^n)$ or, more generally, a tempered distribution $u \in \mathcal{S}'(\mathbb{R}^n)$, is defined as the family $\{U(\cdot,\sigma)\}_{\sigma>0}$ where
			\begin{equation*}
				U(z,\sigma) = (u * \gamma_{\sigma})(z) = \int_{\mathbb{R}^n} u(y) \gamma_{\sigma}(z-y)\,dy,
			\end{equation*}
			with the convolution understood in the distributional sense for $u \in \mathcal{S}'(\mathbb{R}^n)$. For each $\sigma > 0$, $U(\cdot,\sigma)$ is a smoothed version of $u$; as $\sigma \to 0$, one has $U(\cdot,\sigma) \to u$ almost everywhere (under suitable conditions).
		
		\subsection{Illustrative examples of $\tau$}
			
			Before developing the analytical theory, it is instructive to examine the behavior of the texture operator on a number of simple yet representative examples. Although elementary, these examples illustrate the principal qualitative behaviors of the operator under Gaussian scale evolution and anticipate many of the analytical properties established in the subsequent sections. Their purpose is not to provide rigorous proofs, but to build intuition for the mathematical theory developed later.
		
			Using the notation introduced in previous subsection, we write $g_{z,u}(\sigma) = U(z,\sigma)$ to emphasize its interpretation as a multiscale contextual signature. We recall the fundamental definitions. For $u \in L^1(\mathbb{R}^n)$ and $z \in \mathbb{R}^n$, we define the \emph{multiscale contextual signature} by
			\begin{equation*}
				g_{z,u}(\sigma) = (u * \gamma_{\sigma})(z) = \int_{\mathbb{R}^n} u(y) \gamma_{\sigma}(z-y)\,dy.
			\end{equation*}
			The function $\sigma \mapsto g_{z,u}(\sigma)$ describes how the local average of $u$ around the point $z$ evolves as the observation scale increases. For small $\sigma$, $g_{z,u}(\sigma)$ approximates the pointwise value $u(z)$; for large $\sigma$, it tends to the global mean value of $u$ (if the function is integrable).
			
			The \emph{pointwise multiscale texture operator} is defined as the derivative of this signature with respect to the scale:
			\begin{equation*}
				\tau_{z,u}(\sigma) = \frac{\partial}{\partial\sigma} g_{z,u}(\sigma).
			\end{equation*}
			
			Intuitively, $\tau_{z,u}(\sigma)$ measures the \emph{rate of change} of the local neighbourhood of $z$ as the scale varies. Small values of $|\tau_{z,u}(\sigma)|$ indicate that the structure around $z$ is relatively homogeneous at scale $\sigma$; large values indicate the presence of transitions, edges, or active textural patterns at that scale.
			
			\medskip
			
			\noindent \textbf{Example 1: Constant functions.}
			Let $u(x) = c$ be constant. Then $g_{z,u}(\sigma) = c$ for all $\sigma$, and therefore $\tau_{z,u}(\sigma) = 0$ for all $z$ and $\sigma$. The operator does not detect texture in the absence of local variations.
			
			\medskip
			
			\noindent \textbf{Example 2: An ideal edge (step function).}
			Consider in one dimension $u(x) = \mathbf{1}_{[0,\infty)}(x)$, the Heaviside step. For a point $z$ far from the edge, $g_{z,u}(\sigma)$ is essentially constant and $\tau_{z,u}(\sigma)$ is small. Near the edge, however, the signature $g_{z,u}(\sigma)$ smoothly transitions from $0$ to $1$ over an interval of order $\sigma$, and its derivative $\tau_{z,u}(\sigma)$ attains a maximum in the vicinity of the discontinuity. The maximum occurs at scales comparable with the distance to the discontinuity. The operator thus detects the edge, but does so through a smooth, scale-dependent signature.
			
			\begin{figure}[htb]
				\centering
				\includegraphics[width=.48\textwidth]{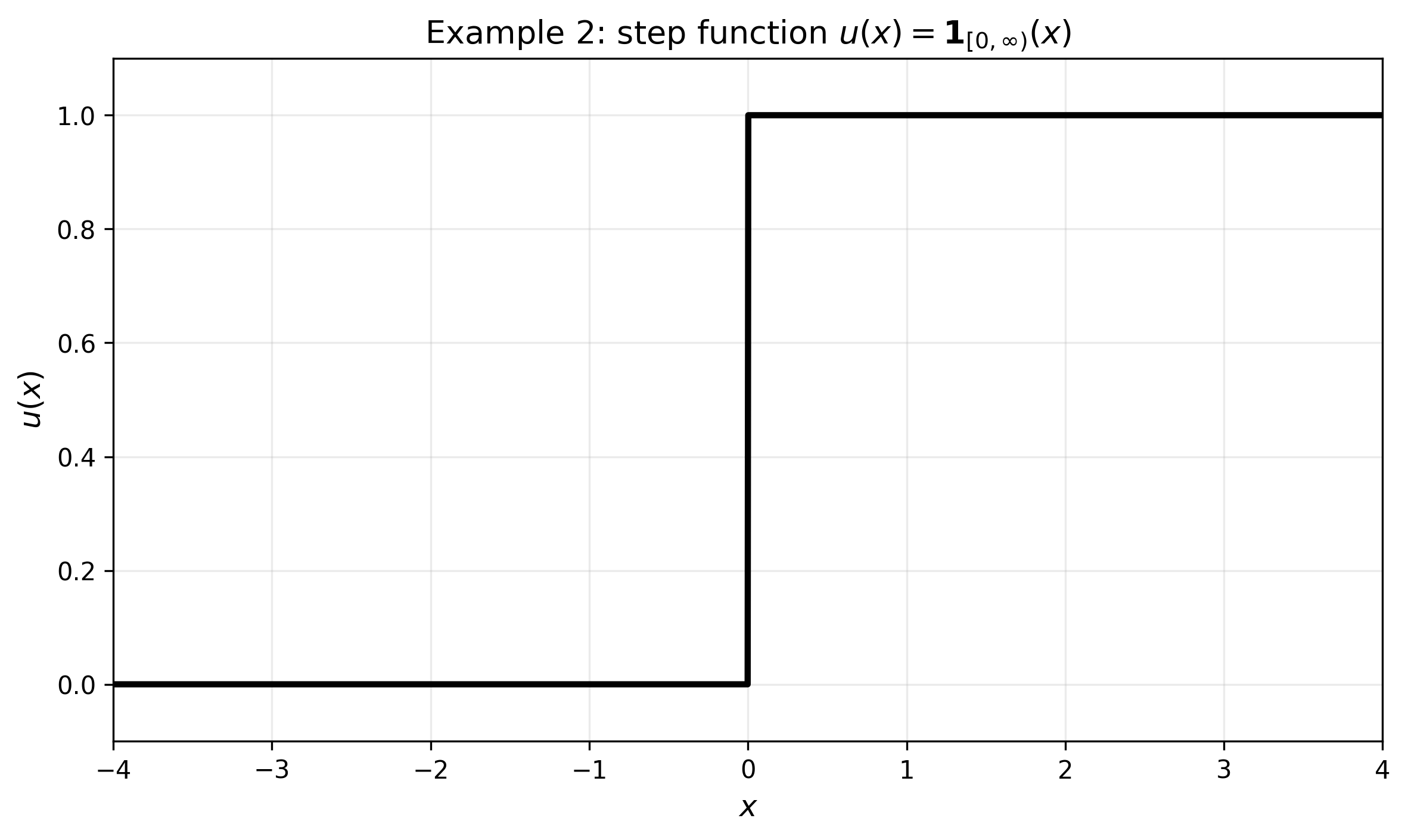}
				\includegraphics[width=.48\textwidth]{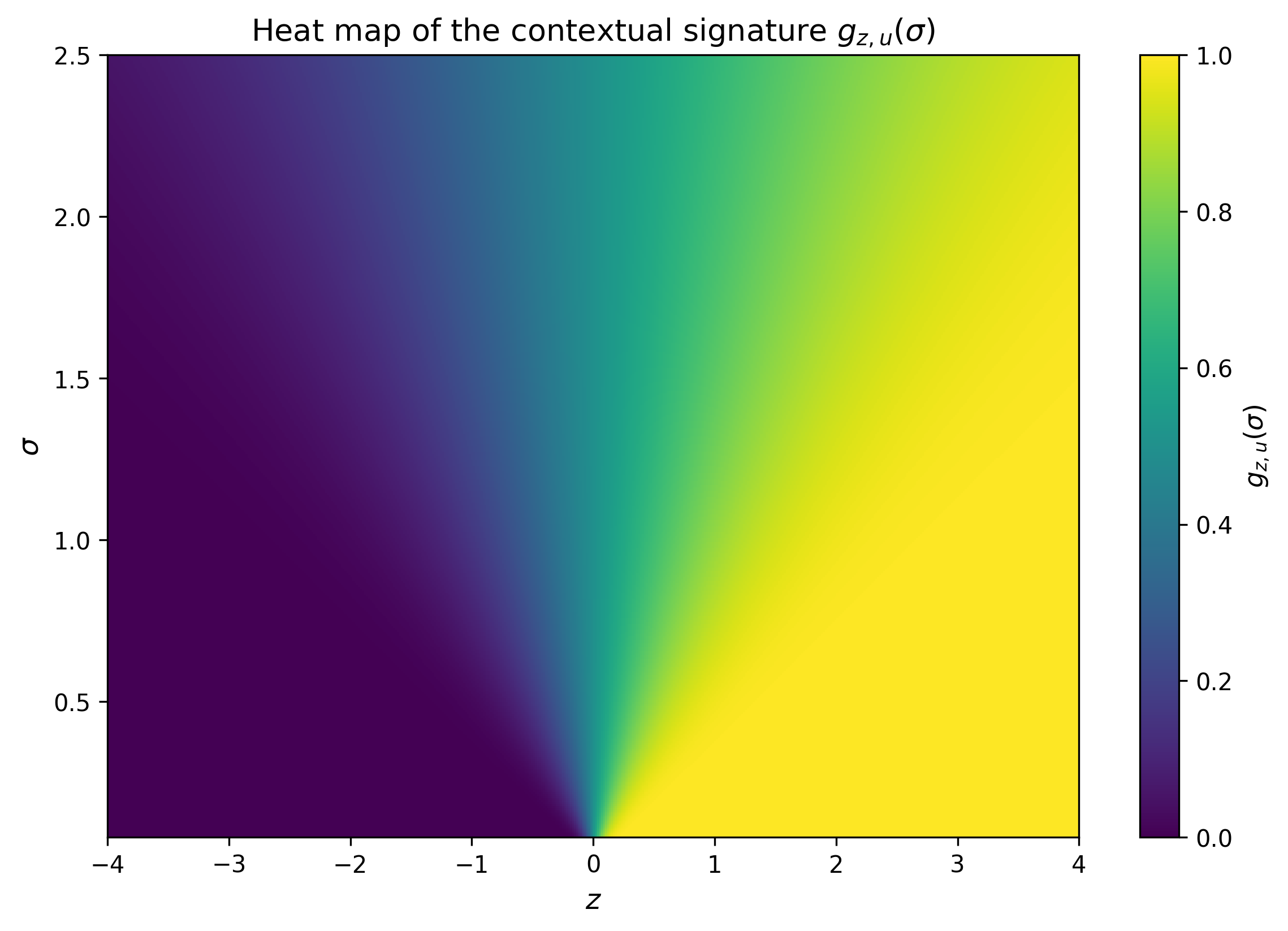}
				
				\vspace{0.3cm}
				
				\includegraphics[width=.48\textwidth]{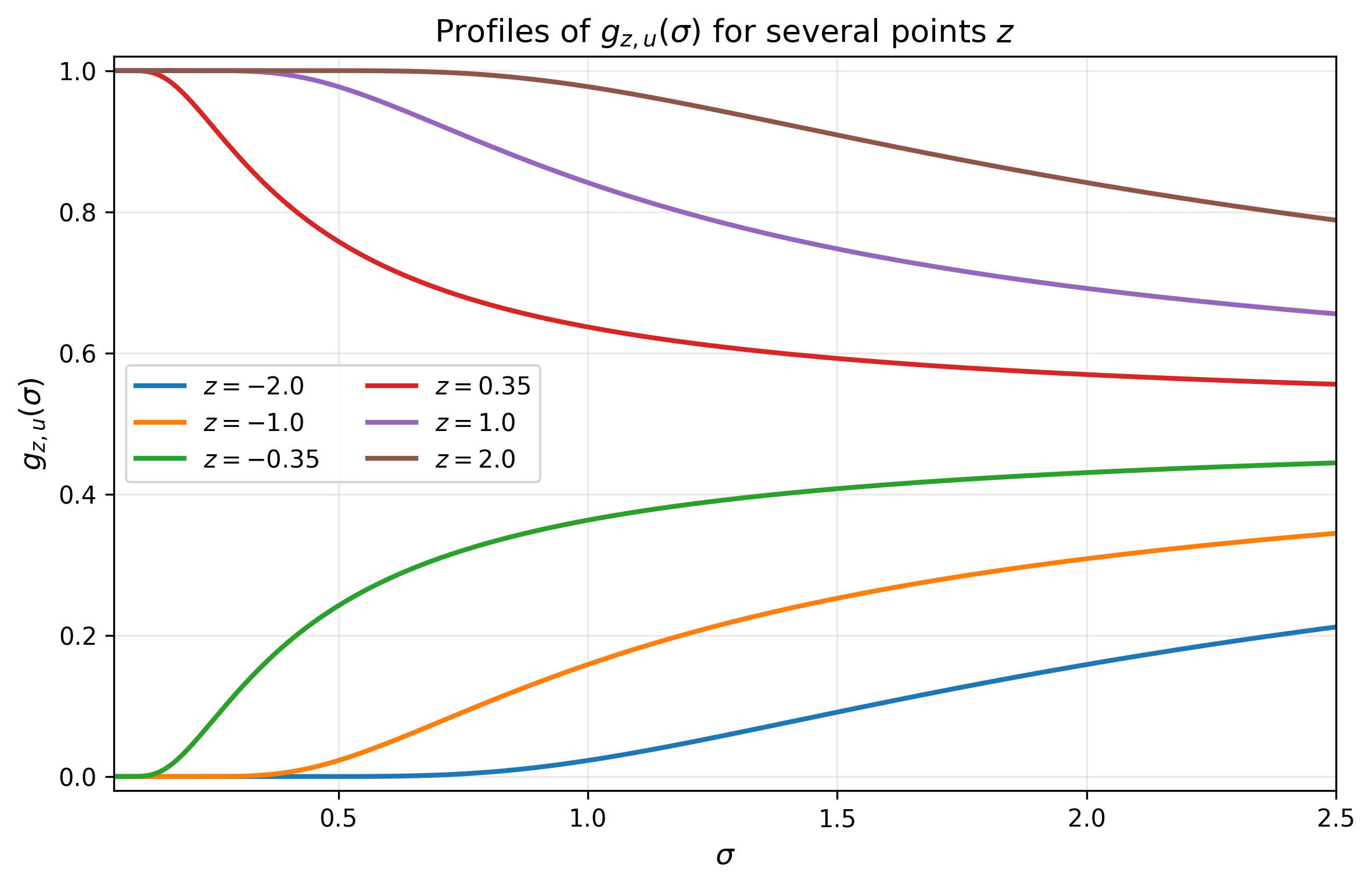}
				\includegraphics[width=.48\textwidth]{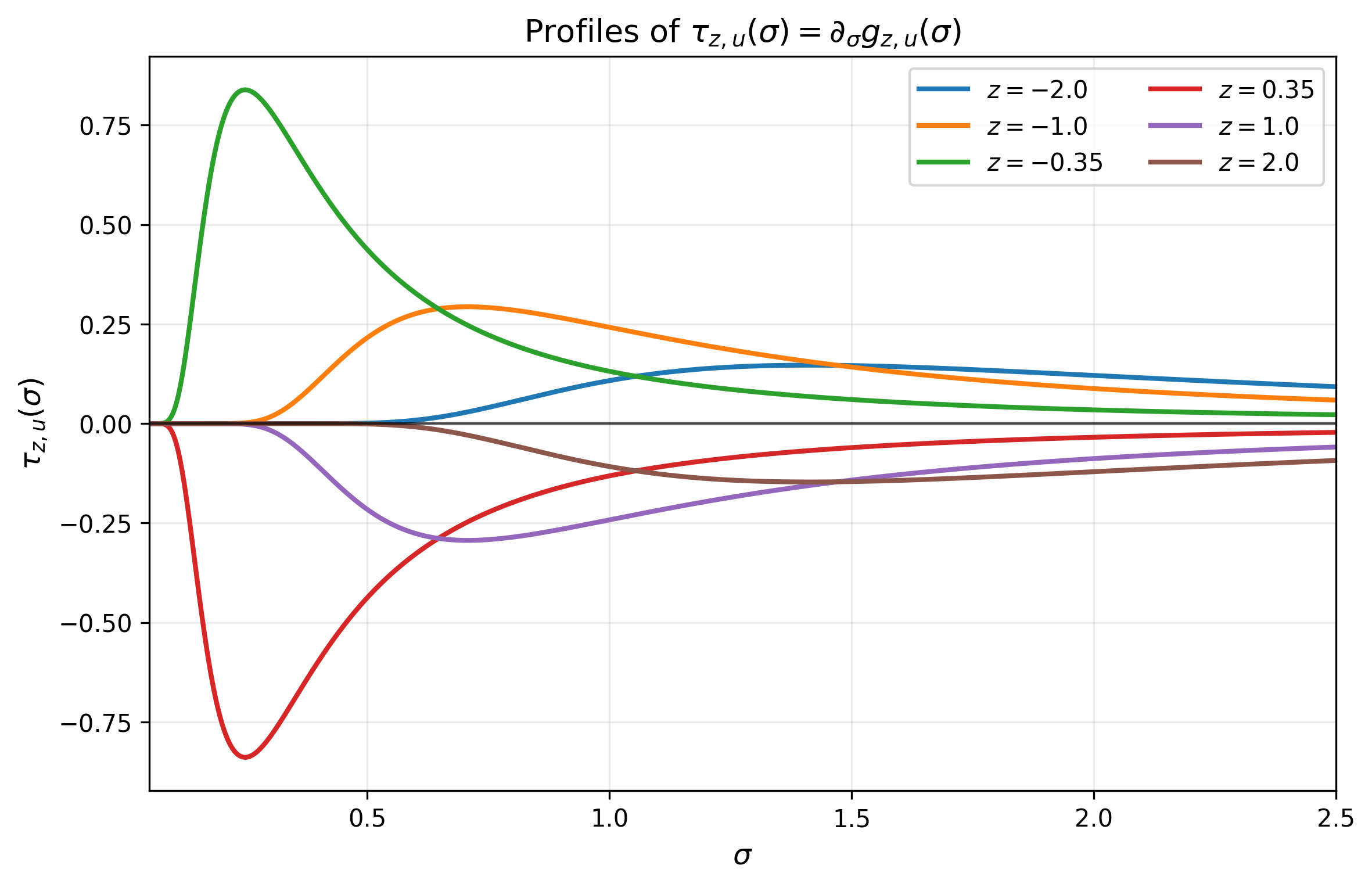}
				\caption{Illustrations of Example 2 for the ideal edge (Heaviside step function): original signal, multiscale contextual signature, and profiles of $g_{z,u}(\sigma)$ and $\tau_{z,u}(\sigma)$ around the discontinuity.}
			\end{figure}
			
			\medskip
			
			\noindent \textbf{Example 3: Sinusoidal texture.}
			Let $u(x) = \cos(\omega x)$ in one dimension. Its contextual signature is
			\begin{equation*}
				g_{z,u}(\sigma) = e^{-\omega^2\sigma^2/2} \cos(\omega z),
			\end{equation*}
			which decays exponentially with $\sigma$. The texture operator is given by
			\begin{equation*}
				\tau_{z,u}(\sigma) = -\omega^2 \sigma e^{-\omega^2\sigma^2/2} \cos(\omega z) = \omega^2 \sigma e^{-\omega^2\sigma^2/2} \cos(\omega z + \pi).
			\end{equation*}
			For $\sigma \ll 1/\omega$, $\tau$ is small because the texture has not yet been averaged out. It attains a maximum at $\sigma \approx 1/\omega$, the characteristic scale of the texture, and then decays to zero.
			
			For a sinusoid of frequency $\omega$, the response of the texture operator is maximal at the observation scale $\sigma = \frac{1}{w}$.
			
			\begin{figure}[htb]
				\centering
				\includegraphics[width=.48\textwidth]{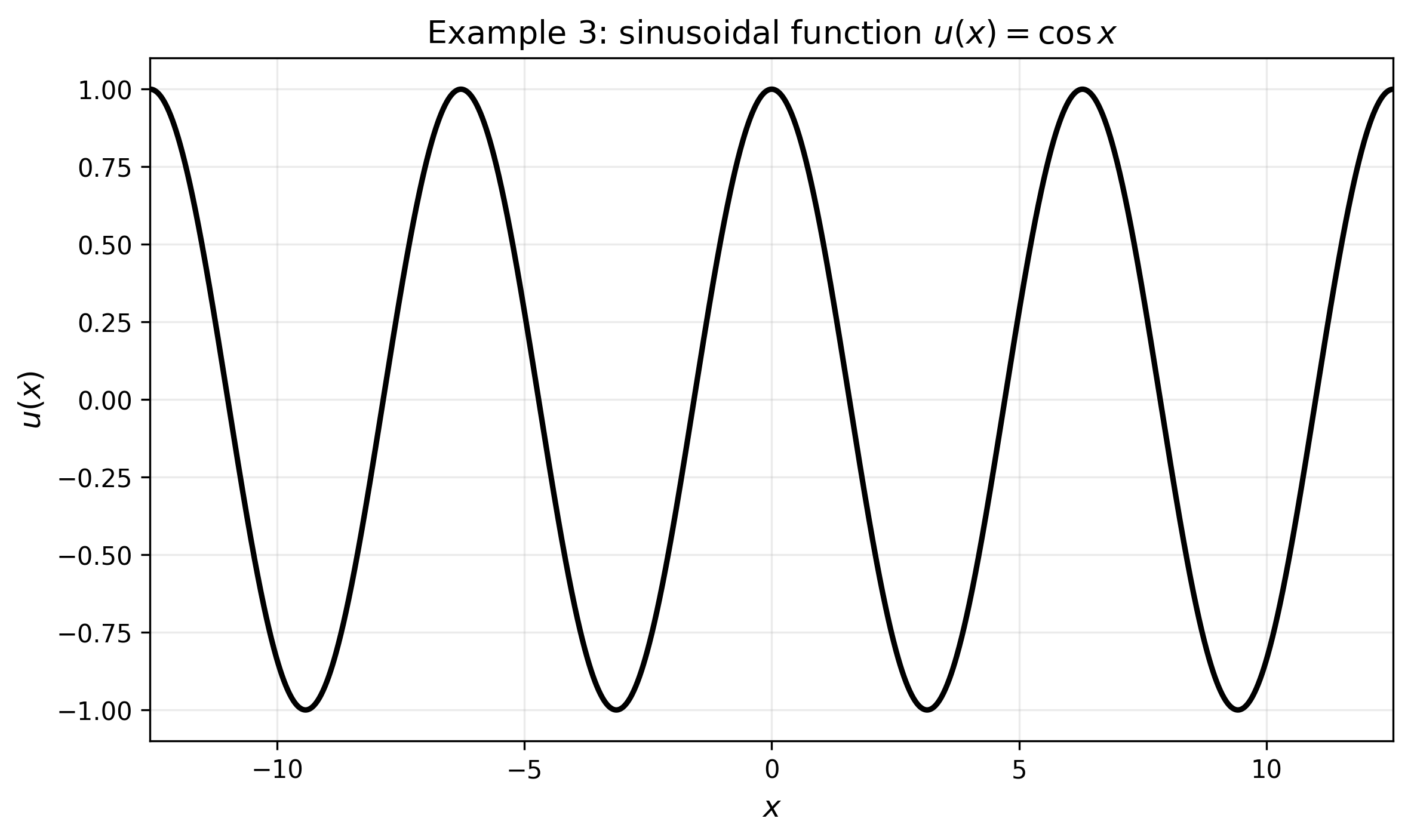}
				\includegraphics[width=.48\textwidth]{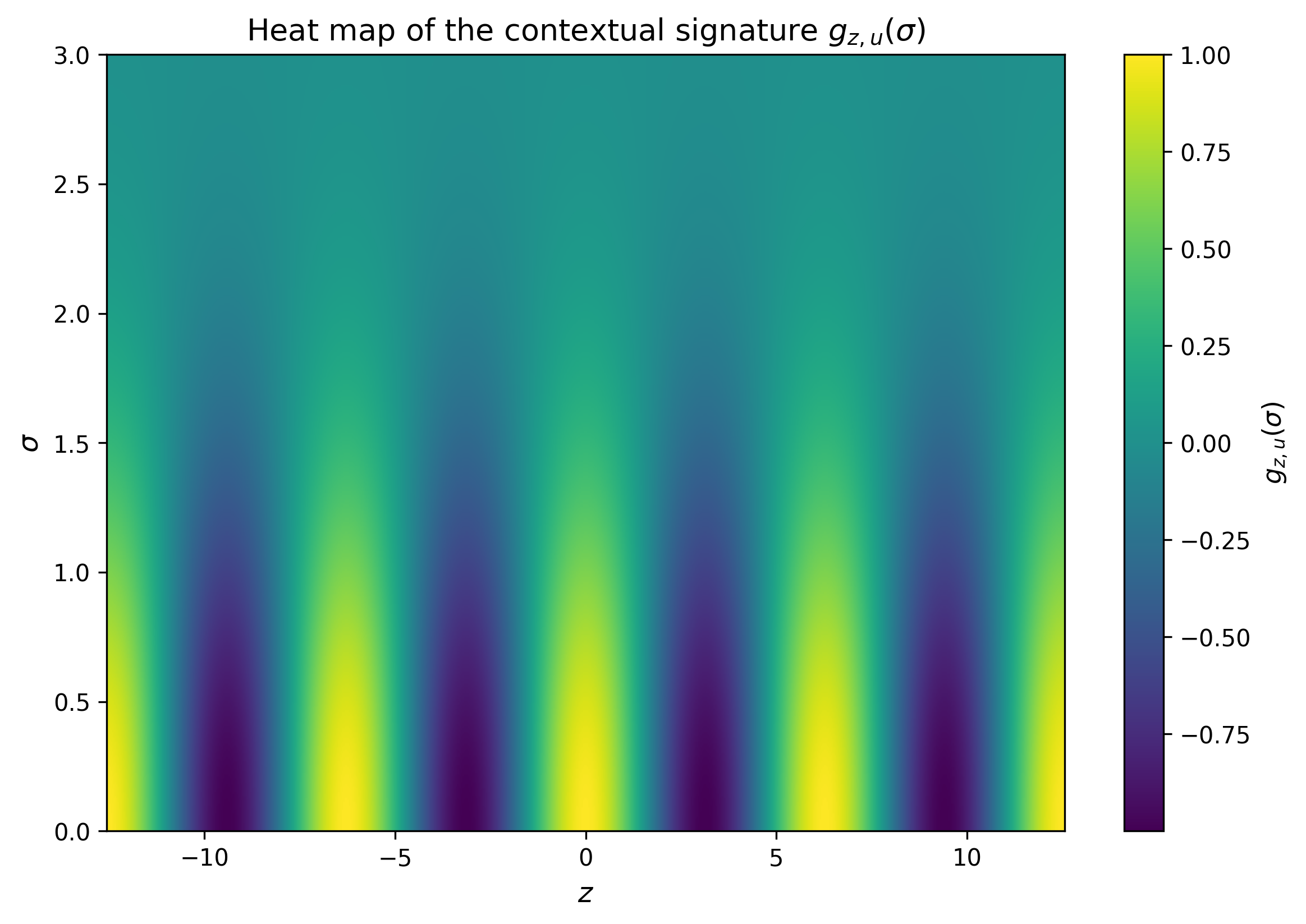}
				
				\vspace{0.3cm}
				
				\includegraphics[width=.48\textwidth]{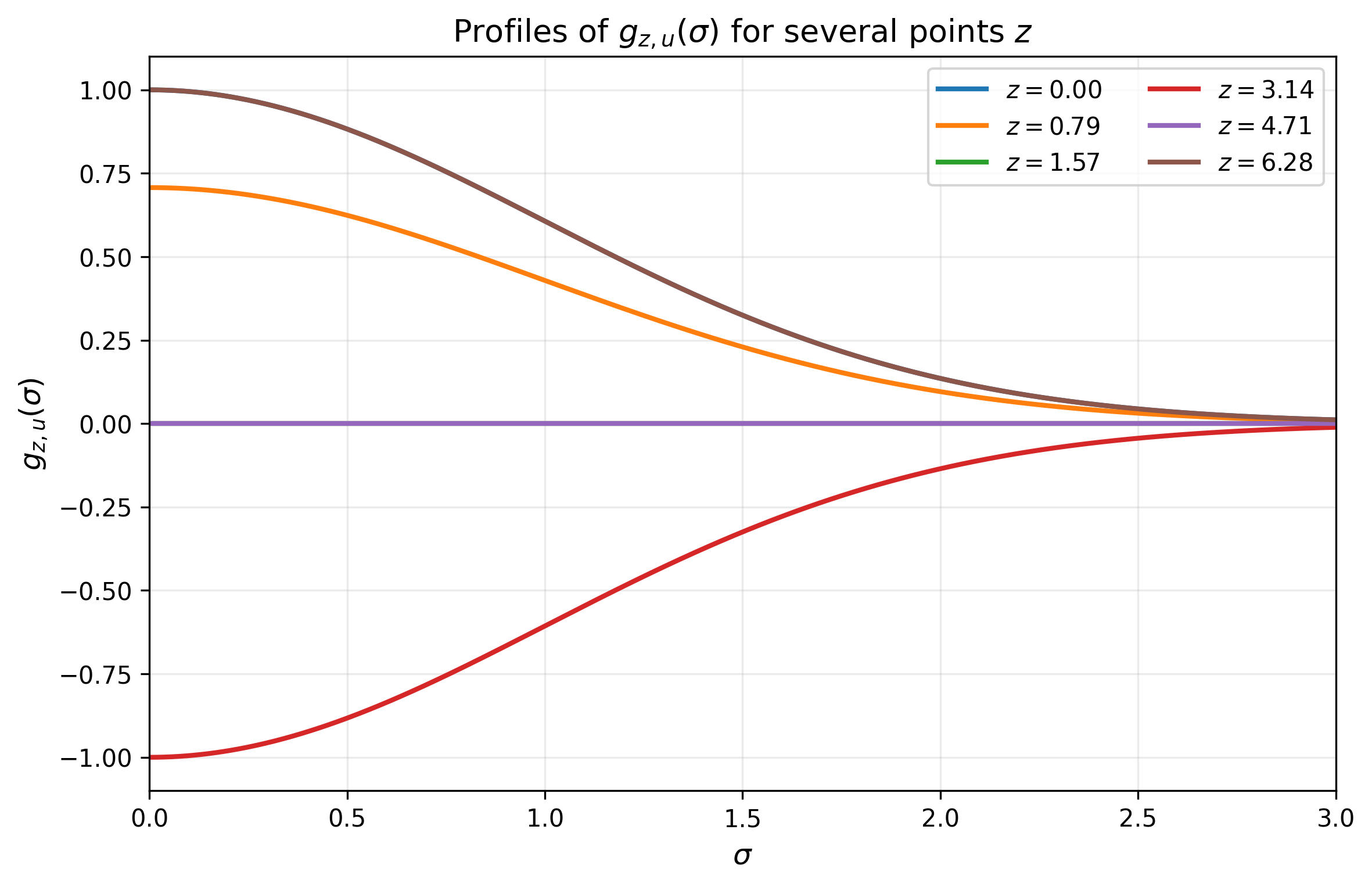}
				\includegraphics[width=.48\textwidth]{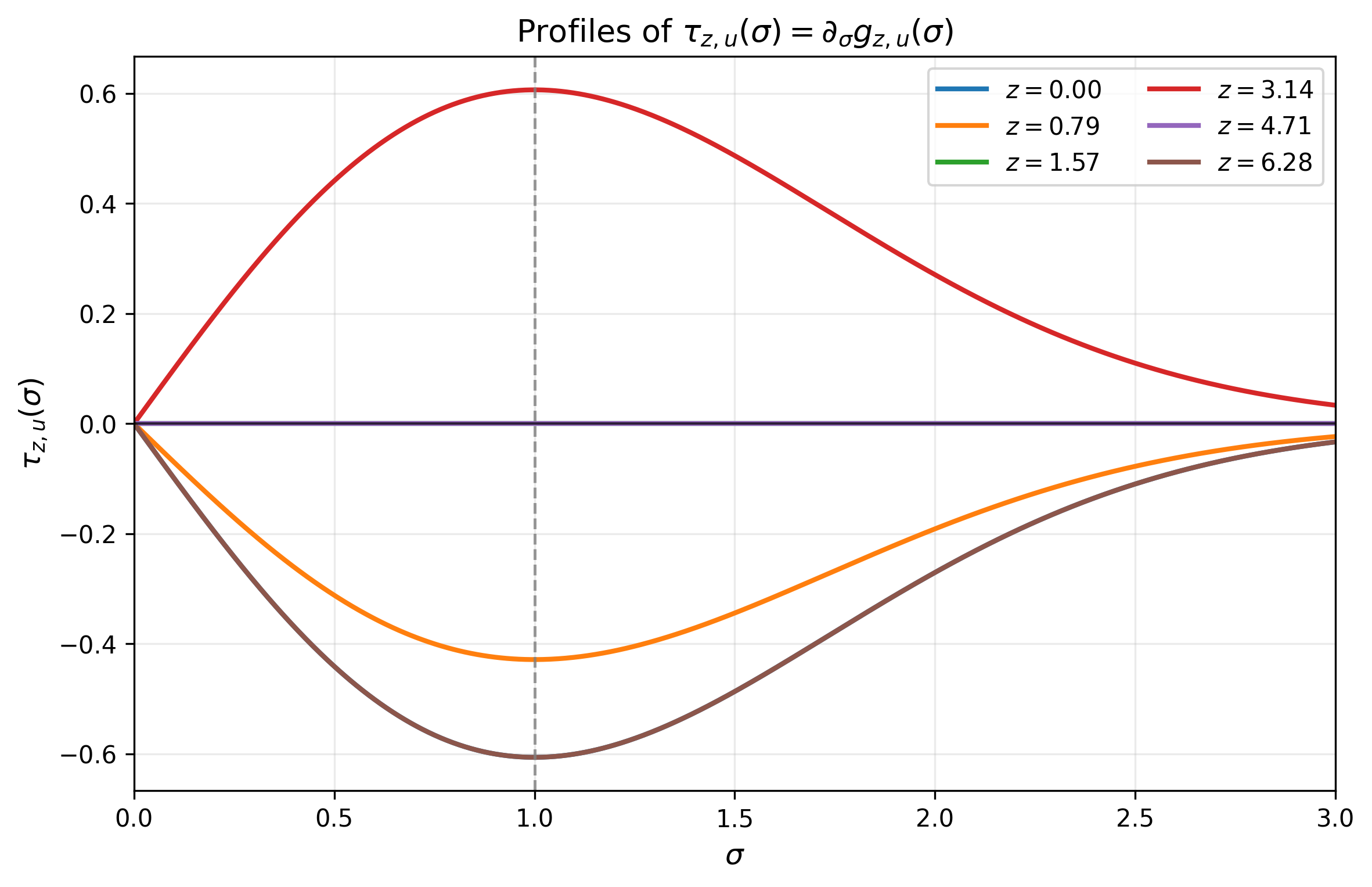}
				\caption{Illustrations of Example 3 for the sinusoidal texture $u(x)=\cos x$: original signal, heat map of $g_{z,u}(\sigma)$, and profiles of $g_{z,u}(\sigma)$ and $\tau_{z,u}(\sigma)$ showing resonance at the characteristic scale.}
			\end{figure}
			
			\medskip
			
			\noindent \textbf{Example 4: White noise.}
			If $u$ is Gaussian white noise, its Fourier transform is flat. Convolution with $\gamma_\sigma$ yields a process whose values converge, for large values of the parameter, to the mean value of the noise, and for which $\tau_{z,u}(\sigma)$ is a random variable with variance scaling like $\sigma^{-n-2}$. Formally, this scaling follows from the fact that $\tau_{z,u}(\sigma) = (u * \Phi_\sigma)(z)$ with $\Phi_\sigma = \sigma\Delta\gamma_\sigma$, whose $L^2$-norm is of order $\sigma^{-n/2-1}$; hence the variance of the convolution with white noise scales as $\|\Phi_\sigma\|_{L^2}^2 \sim \sigma^{-n-2}$. For small $\sigma$, the response is large and highly variable; for large $\sigma$, the noise is averaged out and the response decays. This observation anticipates the need for robustness to noise, which we will address in Section~\ref{sec:spectral}.
			
			\begin{figure}[htb]
				\centering
				\includegraphics[width=.48\textwidth]{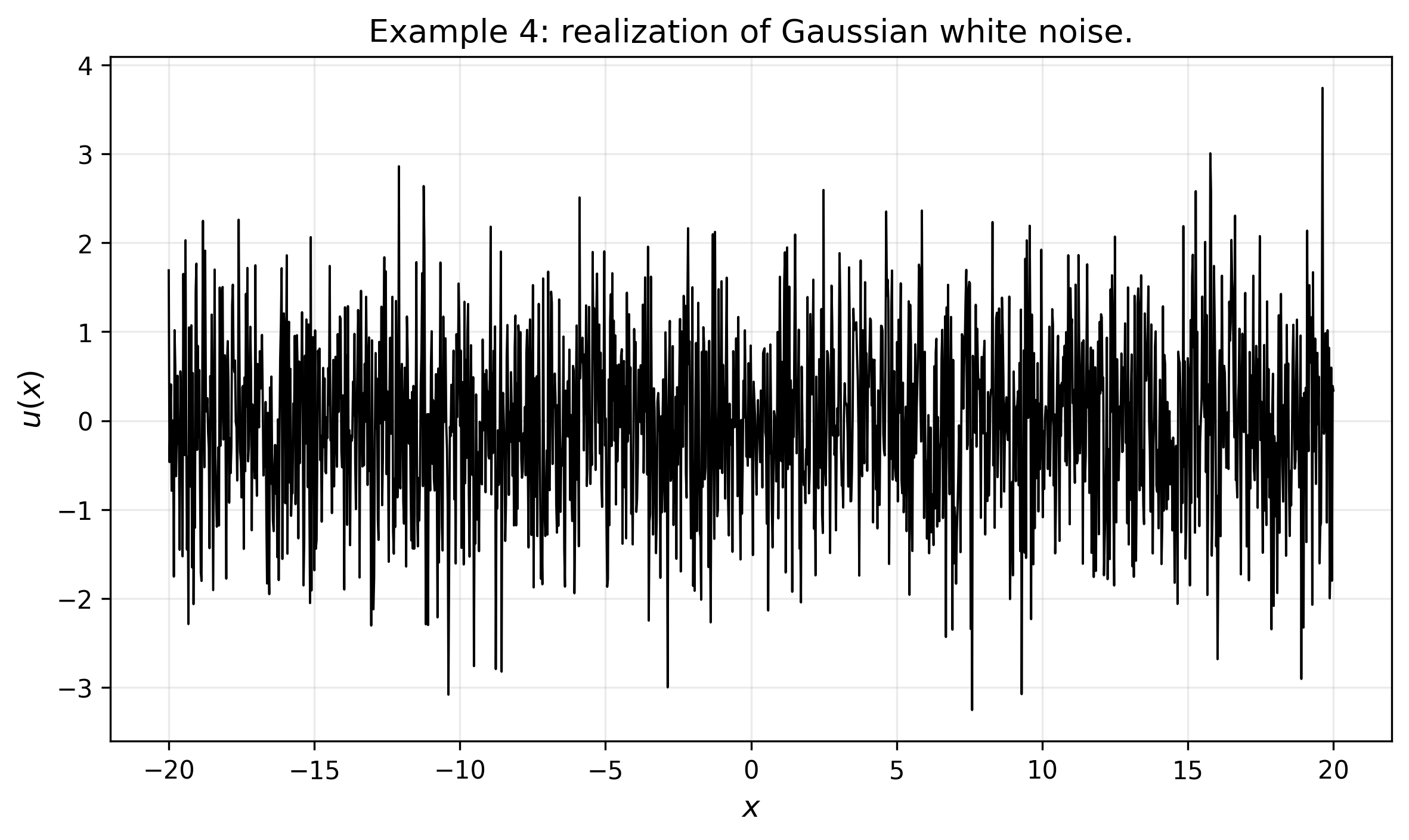}
				\includegraphics[width=.48\textwidth]{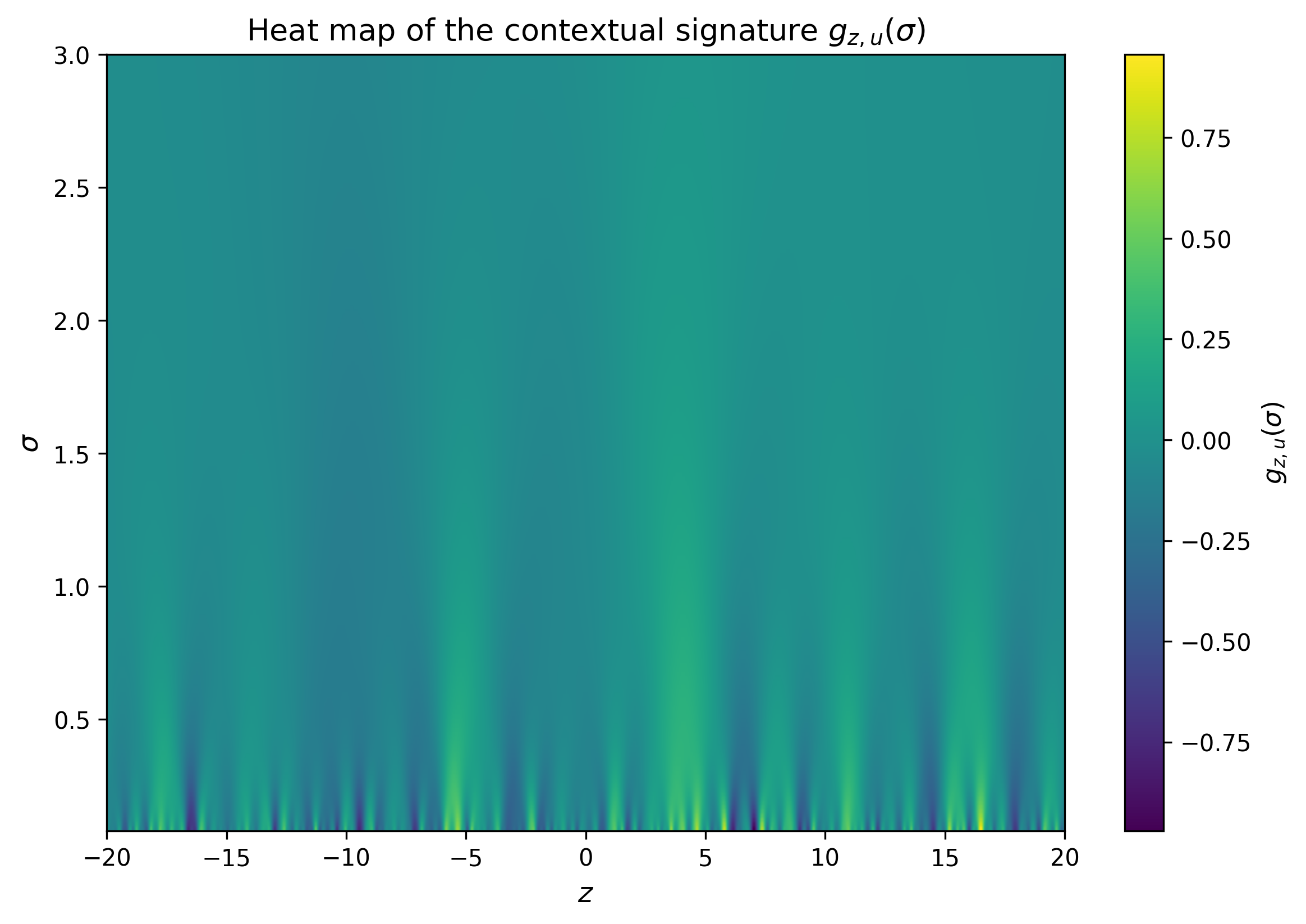}
				
				\vspace{0.3cm}
				
				\includegraphics[width=.48\textwidth]{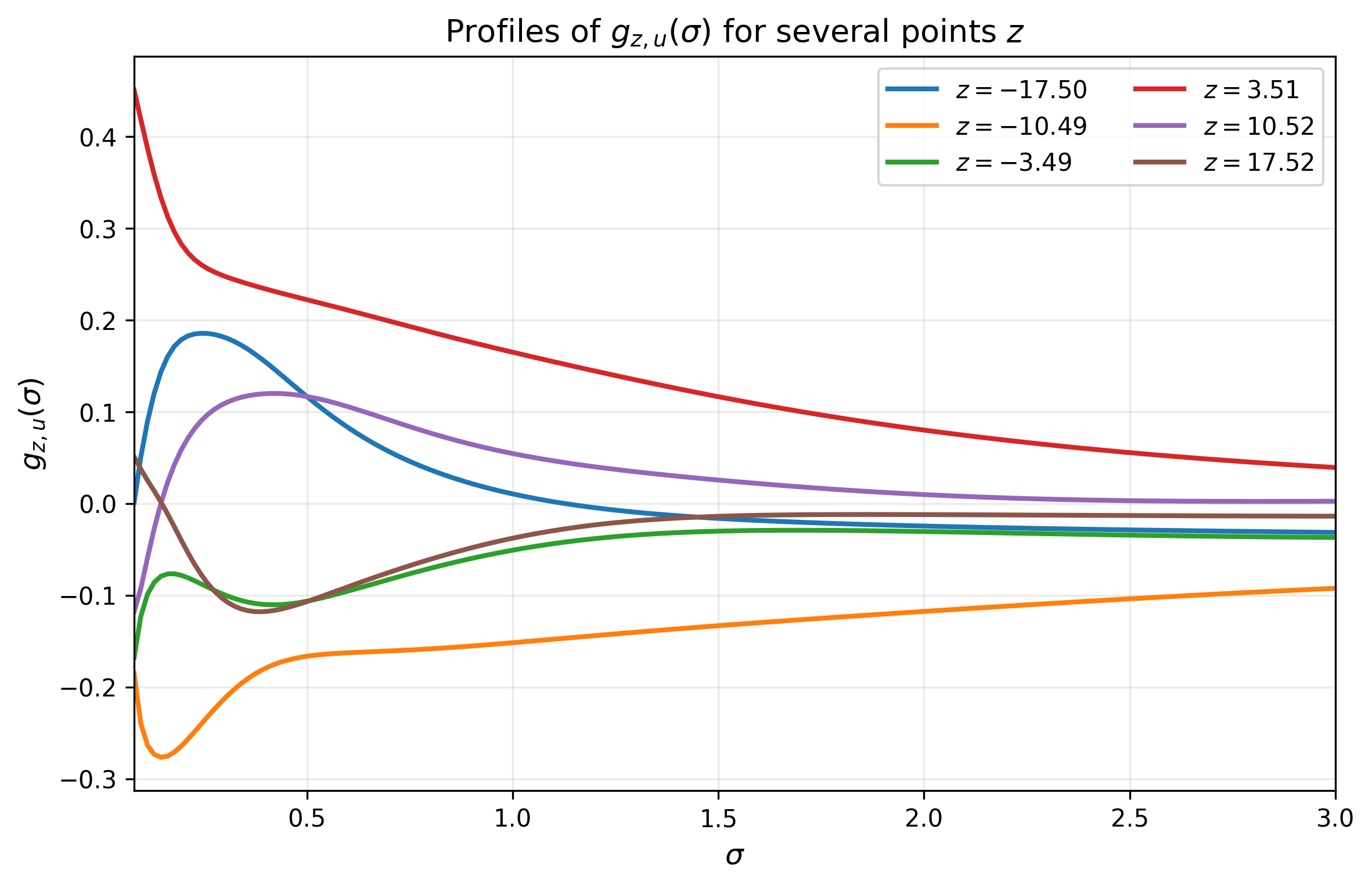}
				\includegraphics[width=.48\textwidth]{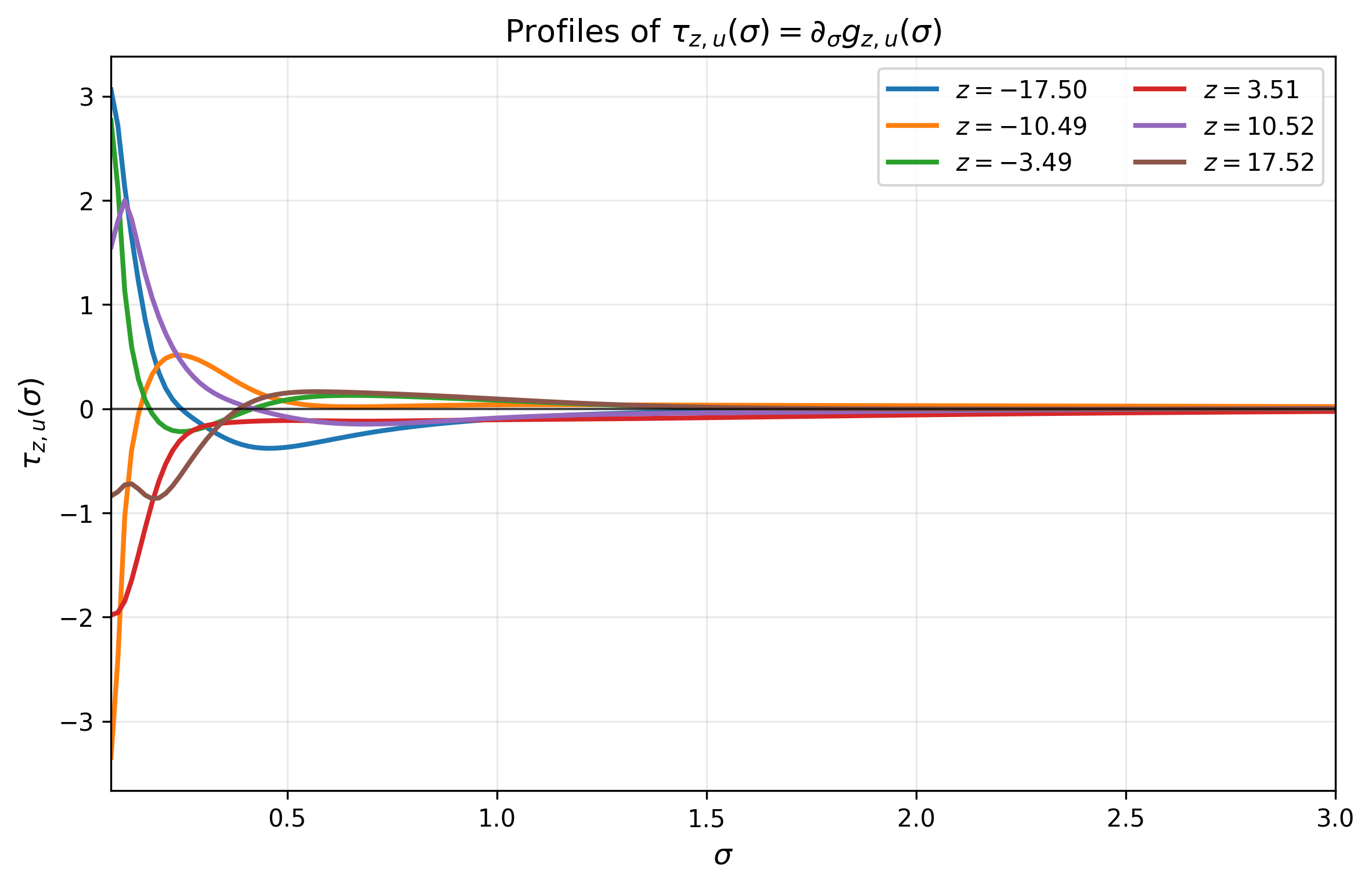}
				\caption{Illustrations of Example 4 for a realization of Gaussian white noise: original signal, Gaussian smoothings at different scales, and evolution of $g_{z,u}(\sigma)$ and $\tau_{z,u}(\sigma)$, showing the attenuation of noise as the scale increases.}
			\end{figure}
			
			\medskip
			
			\noindent \textbf{Example 5: Periodic texture with modulation.}
			Consider $u(x) = A(x)\cos(\omega x)$, where $A(x)$ is a slowly varying envelope. For scales $\sigma$ much smaller than the variation scale of $A$, $\tau$ detects the frequency $\omega$; for larger scales, the envelope modulates the amplitude of the response. This example shows how $\tau$ can simultaneously capture information about fine-scale texture (carrier frequency) and coarse-scale structure (modulation).
			
			\begin{figure}[htb]
				\centering
				\includegraphics[width=.48\textwidth]{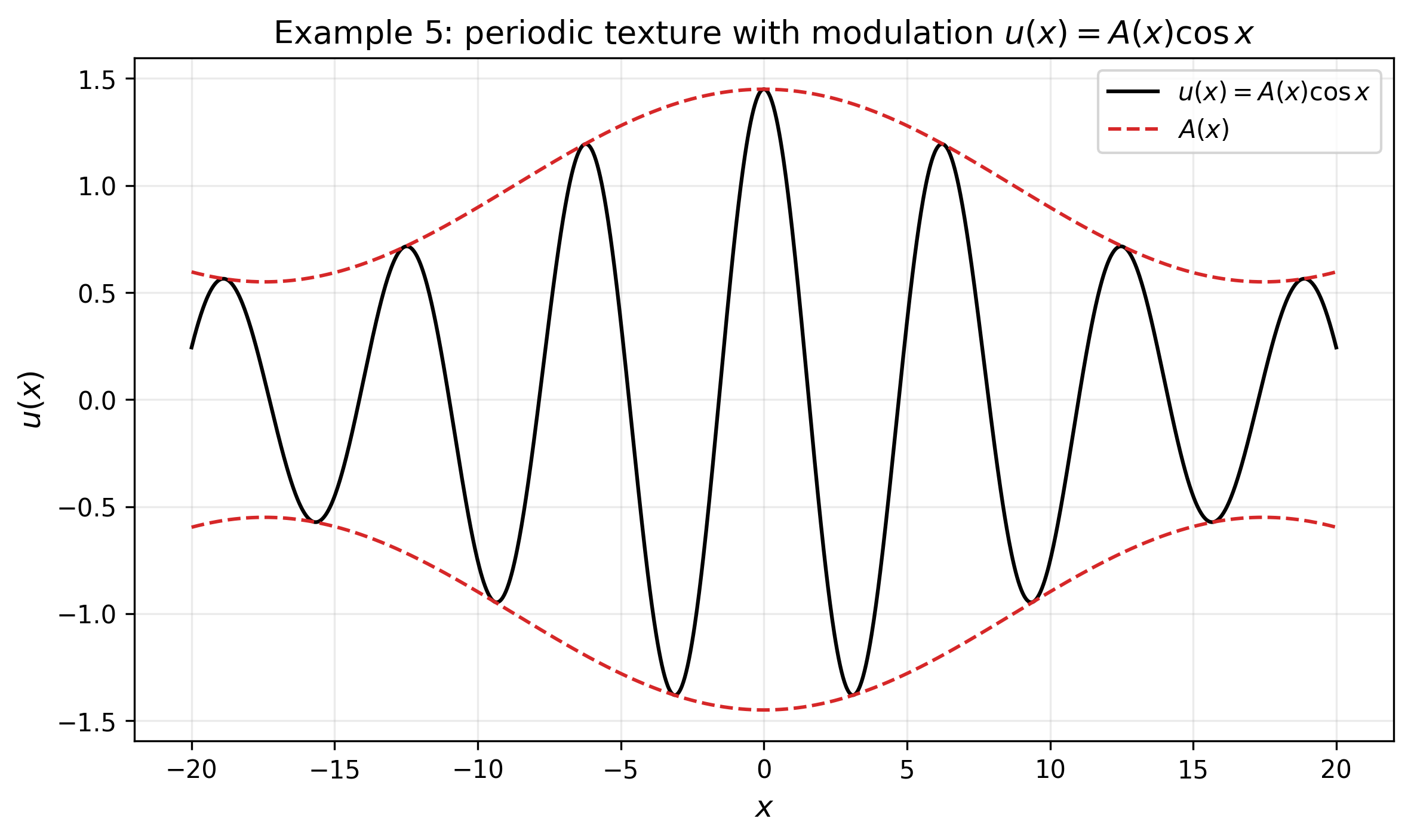}
				\includegraphics[width=.48\textwidth]{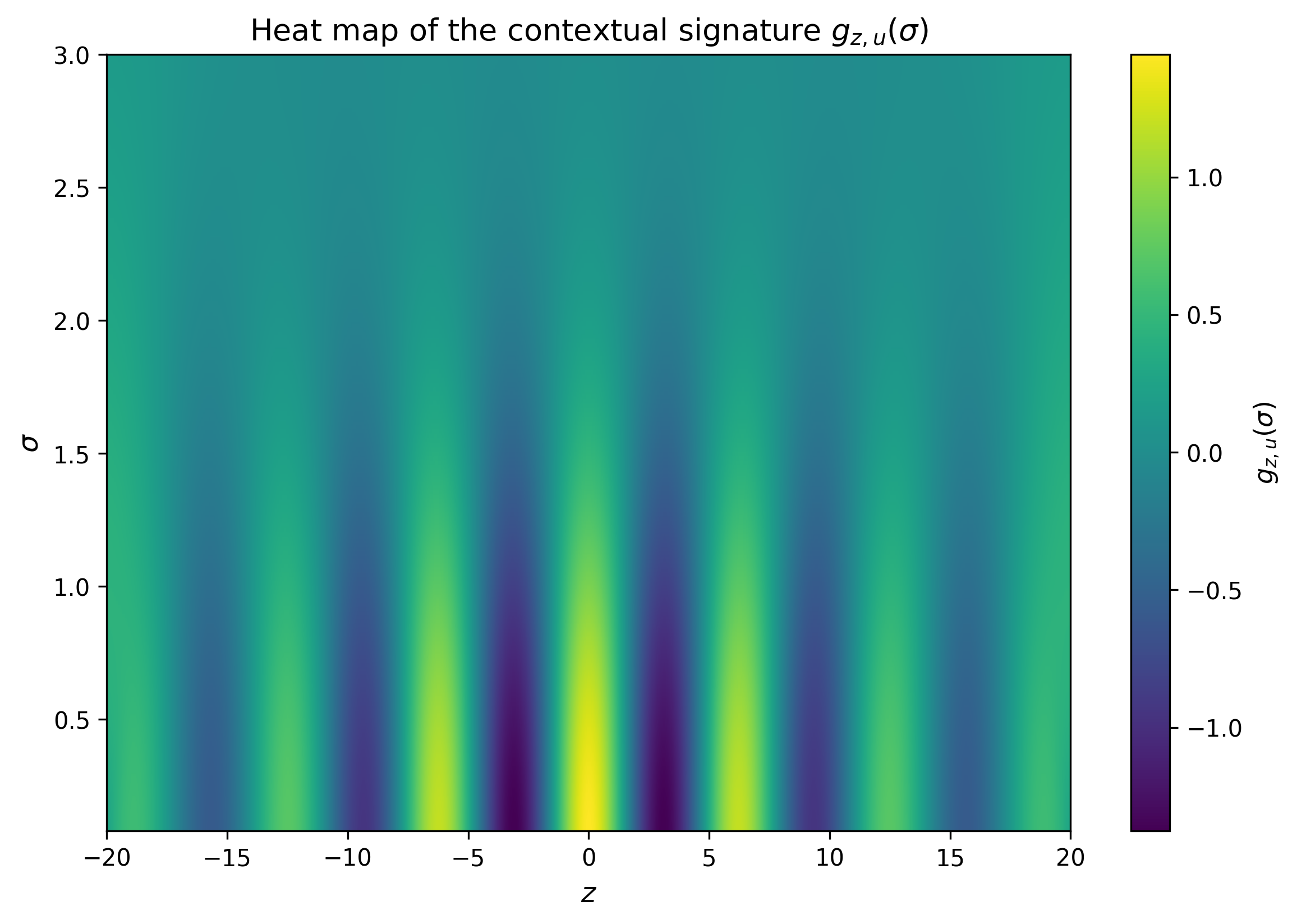}
				
				\vspace{0.3cm}
				
				\includegraphics[width=.48\textwidth]{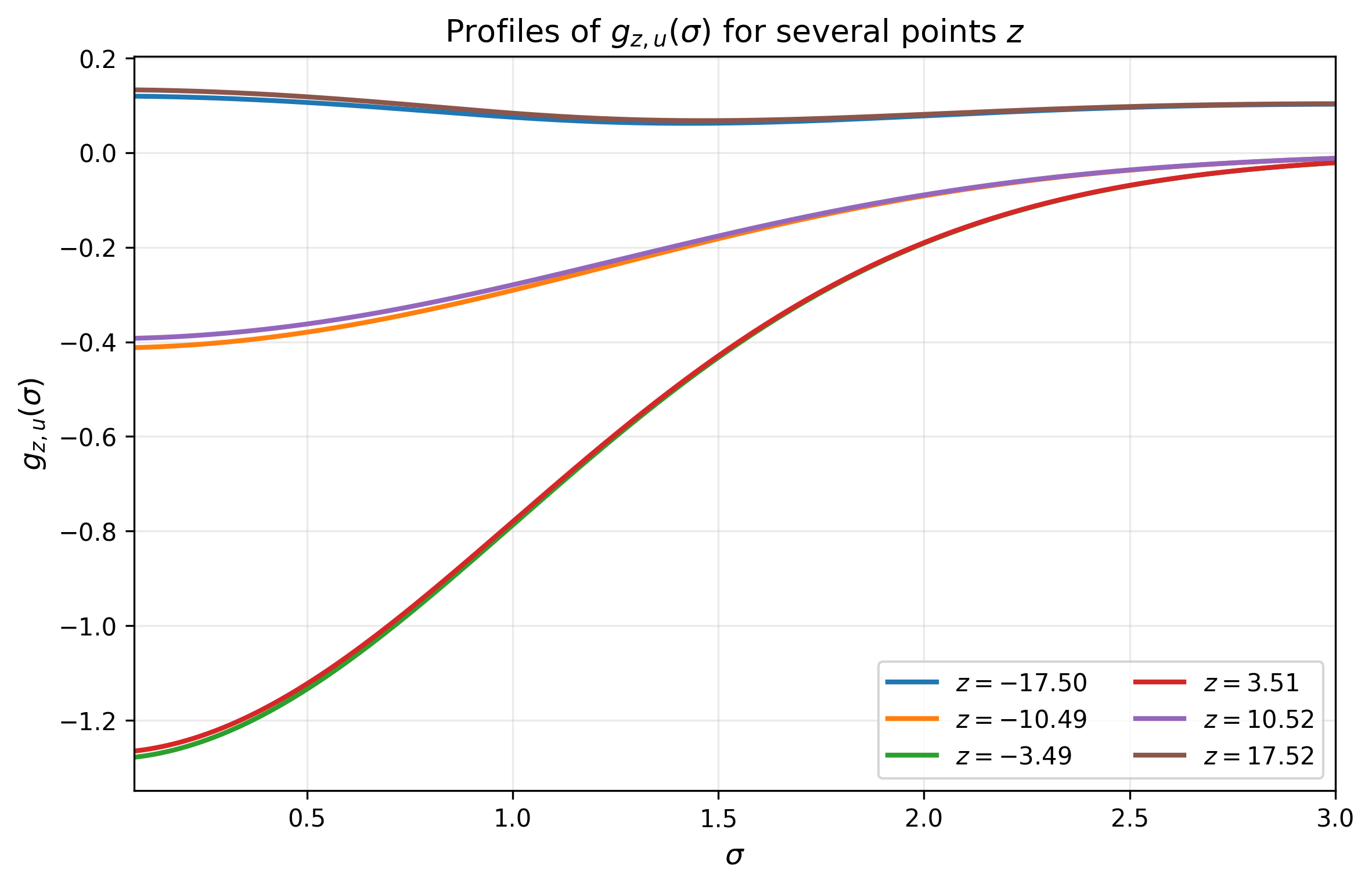}
				\includegraphics[width=.48\textwidth]{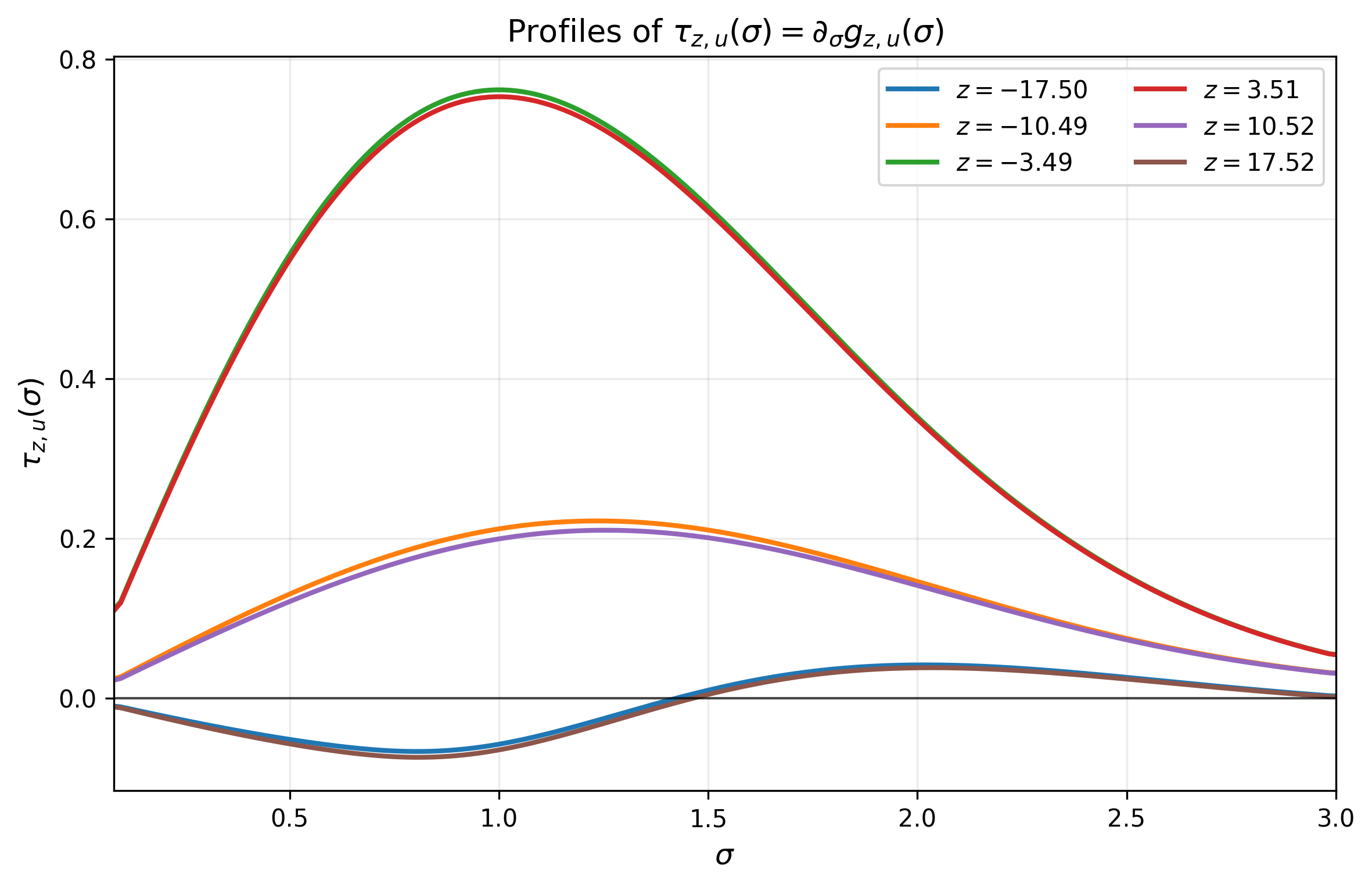}
				\caption{Illustrations of Example 5 for the modulated periodic texture $u(x)=A(x)\cos x$: original texture together with its envelope, heat map of $g_{z,u}(\sigma)$, and profiles of $g_{z,u}(\sigma)$ and $\tau_{z,u}(\sigma)$ reflecting the interaction between carrier frequency and slow modulation.}
			\end{figure}
			
			\medskip
			
			These examples illustrate the qualitative behavior of the pointwise multiscale texture operator in several canonical situations. The analytical results established in the following sections provide a rigorous mathematical explanation of the properties observed here, including boundedness, spectral localization, multiscale decomposition, and the Hilbertian functional framework associated with Gaussian scale evolution.

	\section{Definition and basic properties of the texture operator}\label{sec:basicproperties}
	
		This section introduces the pointwise multiscale texture operator and develops its basic analytical framework. Building upon the Gaussian scale-space representation, we define the multiscale contextual signature and its scale derivative, and establish the structural properties that will support the spectral and multiscale analysis developed in the following sections.
		
		\subsection{Definition of $g_{z,u}$ and $\tau_{z,u}$}
		
		We start from the classical Gaussian scale-space representation of a function. This construction naturally gives rise to a family of local averages parameterized by the scale variable. The multiscale contextual signature and the pointwise multiscale texture operator are obtained by studying the evolution of these averages as the scale varies.
		
		\begin{definition}
			Let $u \in L^p(\R^n)$, with $1\le p\le \infty$, and let $z \in \R^n$. The \emph{multiscale contextual signature} of $u$ at the point $z$ is defined by
			\begin{equation*}
				g_{z,u}(\sigma) = (u * \gamma_\sigma)(z) = \int_{\R^n}  u(y)\,\gamma_\sigma(z-y)\,dy.
			\end{equation*}
			The \emph{pointwise multiscale texture operator} is defined as the scale derivative of this signature:
			\begin{equation*}
				\tau_{z,u}(\sigma) = \frac{\partial}{\partial \sigma} g_{z,u}(\sigma).
			\end{equation*}
			For $u\in\mathcal S(\R^n)$, differentiation under the integral sign yields
			\begin{equation*}
				\tau_{z,u}(\sigma) = (u * \partial_\sigma \gamma_\sigma)(z).
			\end{equation*}
		\end{definition}
		
		For $u\in\mathcal S(\R^n)$, the convolution identity follows directly by differentiation under the integral sign. The convolution representation $u \mapsto u * \Phi_\sigma$ will later be extended by continuity to $L^p(\R^n)$ using its $L^p$-boundedness (see Proposition~\ref{prop:Lp-boundedness}). Although the pointwise multiscale texture operator was introduced in \cite{fernandez2}, it arises naturally from the Gaussian scale-space evolution introduced by Witkin and subsequently developed by Koenderink and Lindeberg \cite{witkin,koenderink,lindeberg}. The present work is devoted to the analytical study of this operator.
		
		From a qualitative standpoint, the function $g_{z,u}(\sigma)$ describes how the local structure around the point $z$ evolves as the observation scale increases. Small values of $\sigma$ capture fine-scale information, whereas larger values reflect progressively coarser structures. The texture operator $\tau_{z,u}(\sigma)$ measures the rate of variation of this local description with respect to scale. Regions where $|\tau_{z,u}(\sigma)|$ remains small are comparatively homogeneous across the corresponding range of scales, while large values indicate the presence of significant structural or textural changes.
		
		\subsection{Diffusion identity and kernel representation}
		
			The Gaussian scale-space representation provides a convenient analytical framework for the study of the texture operator. In particular, the operator admits an explicit diffusion interpretation, a convolution representation through a scale-dependent kernel, and a precise spectral characterization. The following results summarize the basic analytical properties of this construction.
			
			\begin{proposition}[Diffusion identity in scale-space]\label{prop:diffusion}
				For every $u \in L^p(\R^n)$, $1\le p \le \infty$, and every $\sigma>0$, one has
				\begin{equation*}
					\frac{\partial}{\partial \sigma} g_{z,u}(\sigma) = \sigma \Delta g_{z,u}(\sigma),
				\end{equation*}
				where $\Delta$ denotes the spatial Laplacian. Consequently,
				\begin{equation*}
					\tau_{z,u}(\sigma) = \sigma \Delta (u * \gamma_\sigma)(z).
				\end{equation*}
			\end{proposition}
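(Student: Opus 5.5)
The plan is to reduce the identity to the corresponding statement for the Gaussian kernel itself and then transfer it to $u$ by differentiating under the integral sign, using the decay of the Gaussian and its derivatives to justify the exchange for arbitrary $u\in L^p(\R^n)$.

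First, I verify the kernel identity $\partial_\sigma\gamma_\sigma=\sigma\Delta\gamma_\sigma$ pointwise on $\R^n\times(0,\infty)$. This is a direct calculation. Writing $\gamma_\sigma(x)=(2\pi\sigma^2)^{-n/2}e^{-|x|^2/(2\sigma^2)}$, one gets
\begin{equation*}
\partial_\sigma\gamma_\sigma(x)=\Bigl(-\frac{n}{\sigma}+\frac{|x|^2}{\sigma^3}\Bigr)\gamma_\sigma(x),
\qquad
\Delta\gamma_\sigma(x)=\Bigl(-\frac{n}{\sigma^2}+\frac{|x|^2}{\sigma^4}\Bigr)\gamma_\sigma(x).
\end{equation*}
The two expressions agree after multiplying the second by $\sigma$. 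Alternatively, one can check the identity on the Fourier side, since $\partial_\sigma e^{-\sigma^2|\xi|^2/2}=-\sigma|\xi|^2e^{-\sigma^2|\xi|^2/2}$.

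Second, I justify differentiating $g_{z,u}(\sigma)=\int u(y)\gamma_\sigma(z-y)\,dy$ once in $\sigma$ and twice in $z$ under the integral sign. Fix a compact interval $[\sigma_0,\sigma_1]\subset(0,\infty)$ and a bounded neighbourhood of $z$. From the formulas above, every derivative $\partial_\sigma\gamma_\sigma$, $\partial_{x_i}\gamma_\sigma$ and $\partial^2_{x_ix_j}\gamma_\sigma$ is bounded there by $C(1+|x|^2)e^{-|x|^2/(2\sigma_1^2)}$. Hence the relevant derivatives of $\gamma_\sigma(z-y)$ are dominated, uniformly in the local parameters, by a function $G(z-y)$ with $G\in L^1\cap L^\infty$, and therefore $G\in L^{p'}$ for every $p'\in[1,\infty]$. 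By Hölder's inequality, $|u(y)|\,G(z-y)$ is integrable, with integral at most $\|u\|_{L^p}\|G\|_{L^{p'}}$. A translation of the domain shifts the argument of $G$ by a bounded amount; to handle this I will enlarge the Gaussian width slightly, using $e^{-|x-h|^2/a}\le C_h e^{-|x|^2/(2a)}$. Dominated convergence then gives
\begin{equation*}
\partial_\sigma g_{z,u}(\sigma)=(u*\partial_\sigma\gamma_\sigma)(z),
\qquad
\Delta(u*\gamma_\sigma)(z)=(u*\Delta\gamma_\sigma)(z),
\end{equation*}
for all $u\in L^p$, including $p=\infty$. This removes the restriction to $\mathcal S$ in the Definition.

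Third, combining the two steps yields $\partial_\sigma g_{z,u}(\sigma)=u*(\sigma\Delta\gamma_\sigma)(z)=\sigma\Delta(u*\gamma_\sigma)(z)$. Since $\tau_{z,u}(\sigma)$ is by definition $\partial_\sigma g_{z,u}(\sigma)$, this gives the stated formula for $\tau_{z,u}(\sigma)$ as well.

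The main obstacle is the domination in the second step. It has to be uniform near a fixed $\sigma$ and near a fixed $z$, and it has to work when $p=\infty$, where no density argument is available. I expect the enlarged-Gaussian bound to handle all cases at once.
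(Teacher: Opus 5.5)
Your proof is correct. The paper itself gives no proof of this proposition. It treats $\partial_\sigma\gamma_\sigma=\sigma\Delta\gamma_\sigma$ as the standard heat-kernel identity. After the Definition it remarks that for $u\in\mathcal S(\R^n)$ one may differentiate under the integral sign, and that the representation $u\mapsto u*\Phi_\sigma$ is to be ``extended by continuity'' to $L^p(\R^n)$ via Proposition~\ref{prop:Lp-boundedness}.

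Your kernel computation matches this. Your treatment of general $u\in L^p$ is a genuinely different route. Instead of density plus continuity, you dominate the $\sigma$- and $z$-derivatives of $\gamma_\sigma(z-y)$, locally uniformly, by a widened Gaussian times a quadratic polynomial, which lies in every $L^{p'}$. You then conclude by H\"older and dominated convergence.

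This gains three things the paper's route does not:
\begin{enumerate}
\item It covers $p=\infty$, where $\mathcal S(\R^n)$ is not dense.
\item It avoids a circularity in the paper's plan: the proof of Proposition~\ref{prop:Lp-boundedness} itself invokes Proposition~\ref{prop:diffusion} to write $\tau_{\cdot,u}(\sigma)=u*\Phi_\sigma$. Your argument establishes that representation independently.
\item It gives the identity pointwise at every $(z,\sigma)$, including existence of the $\sigma$-derivative that defines $\tau_{z,u}(\sigma)$. An extension by continuity, by itself, only defines a bounded operator on $L^p$.
\end{enumerate}
The paper's route is shorter, but as written it is only rigorous for $u\in\mathcal S(\R^n)$. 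With an additional locally uniform convergence argument it would extend to $p<\infty$.

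As a by-product, your Step 1 gives $\Phi_\sigma=\sigma^{-1}\bigl(|x|^2/\sigma^2-n\bigr)\gamma_\sigma$. So the prefactor $\sigma(2\pi\sigma^2)^{-n/2}$ in the paper's displayed formula for $\Phi_\sigma$ should read $\sigma^{-1}(2\pi\sigma^2)^{-n/2}$. This corrected form is what the later identity $\|\Phi_\sigma\|_{L^1}=C_n\sigma^{-1}$ actually relies on.
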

			
			As a direct consequence, the texture operator admits the convolution representation
			\begin{equation*}
				\tau_{z,u}(\sigma) = (u * \Phi_\sigma)(z),
			\end{equation*}
			where
			\begin{equation*}
				\Phi_\sigma(x) = \sigma \Delta \gamma_\sigma(x) = \frac{\sigma}{(2\pi\sigma^2)^{n/2}} \left(\frac{|x|^2}{\sigma^2} - n\right) \exp\!\left(-\frac{|x|^2}{2\sigma^2}\right).
			\end{equation*}
			
			\begin{proposition}\label{prop:TF_nucleo}
				The Fourier transform of $\Phi_\sigma$ is given by
				\begin{equation*}
					\widehat{\Phi_\sigma}(\xi) = -\sigma |\xi|^2 \exp\!\left(-\frac{\sigma^2|\xi|^2}{2}\right).
				\end{equation*}
			\end{proposition}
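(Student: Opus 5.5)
The plan is to compute $\widehat{\Phi_\sigma}$ directly from the definition $\Phi_\sigma=\sigma\Delta\gamma_\sigma$. We combine the known Gaussian transform $\hat\gamma_\sigma(\xi)=e^{-\sigma^2|\xi|^2/2}$ from the preliminaries with the standard rule that converts derivatives into multiplication by polynomials on the Fourier side. We use the convention $\hat u(\xi)=\int u(x)e^{-ix\cdot\xi}\,dx$ fixed in the notation subsection, which is consistent with the stated formula for $\hat\gamma_\sigma$.

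\textbf{Step 1 (Fourier multiplier for derivatives).} Since $\gamma_\sigma\in\mathcal S(\R^n)$, integration by parts gives $\widehat{\partial_{x_j}f}(\xi)=i\xi_j\hat f(\xi)$ for every $f\in\mathcal S(\R^n)$; the boundary terms vanish by rapid decay. Applying this twice in each coordinate and summing yields
\begin{equation*}
\widehat{\Delta f}(\xi)=-|\xi|^2\hat f(\xi).
\end{equation*}

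\textbf{Step 2 (compute the symbol).} By linearity of the Fourier transform,
\begin{equation*}
\widehat{\Phi_\sigma}(\xi)=\sigma\,\widehat{\Delta\gamma_\sigma}(\xi)=-\sigma|\xi|^2\hat\gamma_\sigma(\xi)=-\sigma|\xi|^2e^{-\sigma^2|\xi|^2/2},
\end{equation*}
which is the claimed identity.

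\textbf{Step 3 (consistency check via the scale derivative).} Differentiating $\hat\gamma_\sigma(\xi)$ in $\sigma$ gives $-\sigma|\xi|^2e^{-\sigma^2|\xi|^2/2}$. This is the same expression, in agreement with the diffusion identity $\partial_\sigma\gamma_\sigma=\sigma\Delta\gamma_\sigma$ of Proposition~\ref{prop:diffusion}. To justify exchanging $\partial_\sigma$ with the Fourier integral, note that $|\partial_\sigma\gamma_\sigma(x)|$ is dominated, locally uniformly in $\sigma$ on compact subsets of $(0,\infty)$, by an integrable Gaussian-times-polynomial function, so dominated convergence applies.

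\textbf{Main obstacle.} The only delicate point is bookkeeping the Fourier normalization. With the $2\pi$ placed in the exponent instead, the symbol would be $-4\pi^2\sigma|\xi|^2e^{-2\pi^2\sigma^2|\xi|^2}$. I will therefore state explicitly that the unitary-free convention $e^{-ix\cdot\xi}$ is used, since it is the one under which $\hat\gamma_\sigma(\xi)=e^{-\sigma^2|\xi|^2/2}$ holds.
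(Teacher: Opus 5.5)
Your proof is correct and is essentially the paper's own argument: both compute $\widehat{\Delta\gamma_\sigma}(\xi)=-|\xi|^2\widehat{\gamma_\sigma}(\xi)$ and insert $\widehat{\gamma_\sigma}(\xi)=e^{-\sigma^2|\xi|^2/2}$. Your Step~3 check (differentiating the Gaussian symbol in $\sigma$, consistent with $\partial_\sigma\gamma_\sigma=\sigma\Delta\gamma_\sigma$) and your remark on the $e^{-ix\cdot\xi}$ normalization are both accurate, but neither is needed for the argument.
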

			
			\begin{proof}
				By definition, $\Phi_\sigma = \sigma \Delta \gamma_\sigma$. Taking Fourier transforms and using the convention of Section~\ref{sec:preliminares}, we obtain
				\begin{equation*}
					\widehat{\Phi_\sigma}(\xi) = \sigma\,\widehat{\Delta \gamma_\sigma}(\xi) = \sigma\,(-|\xi|^2)\,\widehat{\gamma_\sigma}(\xi).
				\end{equation*}
				Since $\widehat{\gamma_\sigma}(\xi) = \exp(-\sigma^2|\xi|^2/2)$, this yields
				\begin{equation*}
					\widehat{\Phi_\sigma}(\xi) = -\sigma |\xi|^2 \exp\!\left(-\frac{\sigma^2|\xi|^2}{2}\right),
				\end{equation*}
				which proves the claim.
			\end{proof}
			
			The explicit form of $\widehat{\Phi_\sigma}$ immediately reveals the scale-dependent, frequency-selective nature of the texture operator.
			
			\begin{theorem}[Band-pass properties]\label{thm:bandpass}
				The symbol $\widehat{\Phi_\sigma}$ satisfies:
				\begin{enumerate}
					\item $\widehat{\Phi_\sigma}(0)=0$.
					\item $\widehat{\Phi_\sigma}(\xi)<0$ for every $\xi\neq 0$, and the unique zero is at $\xi=0$.
					\item The unique global minimum is attained at
					\begin{equation*}
						|\xi|_{\min} = \frac{\sqrt{2}}{\sigma}.
					\end{equation*}
					\item
					\begin{equation*}
						\lim_{|\xi|\to\infty} \widehat{\Phi_\sigma}(\xi) = 0.
					\end{equation*}
				\end{enumerate}
				Consequently, $\Phi_\sigma$ acts as a Gaussian band-pass filter whose response is concentrated around frequencies of order $1/\sigma$, with peak response at $\sqrt{2}/\sigma$. The symbol is nonpositive and vanishes only at the origin, reflecting the fact that $\Phi_\sigma$ is a zero-mean, scale-dependent Laplacian of the Gaussian.
			\end{theorem}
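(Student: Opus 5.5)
The plan is to reduce everything to a one-variable calculus problem using the explicit symbol from Proposition~\ref{prop:TF_nucleo}. Since $\widehat{\Phi_\sigma}(\xi) = -\sigma|\xi|^2 e^{-\sigma^2|\xi|^2/2}$ depends on $\xi$ only through $t=|\xi|\ge 0$, we write $\widehat{\Phi_\sigma}(\xi)=-\sigma h(t)$ with
\begin{equation*}
h(t)=t^2 e^{-\sigma^2 t^2/2}, \qquad t\ge 0 .
\end{equation*}
Because $\sigma>0$, all four items become statements about $h$.

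Items 1 and 2 are immediate. We have $h(0)=0$, and for $t>0$ both factors of $h$ are strictly positive. Hence $\widehat{\Phi_\sigma}(\xi)<0$ for $\xi\ne 0$, and the origin is the only zero.

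For item 3, minimizing $\widehat{\Phi_\sigma}$ is the same as maximizing $h$ on $[0,\infty)$. Differentiating gives
\begin{equation*}
h'(t)=t\,e^{-\sigma^2t^2/2}\,(2-\sigma^2t^2).
\end{equation*}
This is positive on $(0,\sqrt2/\sigma)$ and negative on $(\sqrt2/\sigma,\infty)$. So $h$ is strictly increasing, then strictly decreasing, and has a unique global maximum at $t=\sqrt2/\sigma$, with value $2e^{-1}/\sigma^2$. The minimum value of the symbol is therefore $-2/(e\sigma)$.

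One point needs a remark. For $n\ge2$ the minimizing set in $\xi$ is the whole sphere $\{|\xi|=\sqrt2/\sigma\}$, not a single point. "Unique" must therefore be read in the radial variable $|\xi|_{\min}$, as the statement's notation indicates. I expect this wording issue to be the only real subtlety.

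For item 4, the exponential factor dominates the quadratic: $t^2e^{-\sigma^2t^2/2}\to0$ as $t\to\infty$. One way to see this is the bound $e^{s}\ge s^2/2$ with $s=\sigma^2t^2/2$, which gives $h(t)\le 8/(\sigma^4 t^2)$.

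The concluding interpretation follows from these facts. The symbol vanishes at $0$ and at infinity and peaks at $\sqrt2/\sigma$, which is the band-pass behaviour. Moreover, $\widehat{\Phi_\sigma}(0)=\int\Phi_\sigma=0$ is exactly the zero-mean property of the kernel.
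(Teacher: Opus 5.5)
Your proposal is correct and follows essentially the same route as the paper: reduce to the radial variable $t=|\xi|$, differentiate $t^2e^{-\sigma^2t^2/2}$ (the paper differentiates $-\sigma t^2e^{-\sigma^2t^2/2}$ directly), and read off the single sign change at $t=\sqrt2/\sigma$ together with the decay at infinity. Your extra remarks are correct refinements that the paper leaves implicit: uniqueness of the minimizer must be read in $|\xi|$, since for $n\ge2$ the minimum is attained on the whole sphere $\{|\xi|=\sqrt2/\sigma\}$; the minimum value is $-2/(e\sigma)$; and the quantitative bound $h(t)\le 8/(\sigma^4t^2)$ holds.
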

			
			\begin{proof}
				Let $r = |\xi| \ge 0$. From Proposition~\ref{prop:TF_nucleo}, the symbol is given by
				\begin{equation*}
					\widehat{\Phi_\sigma}(\xi) = -\sigma r^2 \exp\!\left(-\frac{\sigma^2 r^2}{2}\right).
				\end{equation*}
				We analyze each property separately.
				
				\begin{enumerate}
					\item At $\xi = 0$, we have $r = 0$, hence
					\begin{equation*}
						\widehat{\Phi_\sigma}(0) = -\sigma \cdot 0^2 \cdot e^{0} = 0.
					\end{equation*}
					
					\item For $\xi \neq 0$, we have $r > 0$. Since $\sigma > 0$, $r^2 > 0$ and $\exp(-\sigma^2 r^2/2) > 0$, it follows that
					\begin{equation*}
						\widehat{\Phi_\sigma}(\xi) = -\sigma r^2 \exp\!\left(-\frac{\sigma^2 r^2}{2}\right) < 0.
					\end{equation*}
					Moreover, the equation $\widehat{\Phi_\sigma}(\xi) = 0$ is therefore equivalent to $r=0$, that is, $\xi = 0$.
					
					\item To locate the critical points, we differentiate with respect to $r$:
					\begin{equation*}
						\frac{d}{dr} \widehat{\Phi_\sigma}(r) = \frac{d}{dr}\left(-\sigma r^2 e^{-\sigma^2 r^2/2}\right).
					\end{equation*}
					Using the product rule,
					\begin{equation*}
						\frac{d}{dr} \widehat{\Phi_\sigma}(r) = -2\sigma r e^{-\sigma^2 r^2/2} + \sigma^3 r^3 e^{-\sigma^2 r^2/2}.
					\end{equation*}
					Factoring the common term $\sigma r e^{-\sigma^2 r^2/2}$, we obtain
					\begin{equation*}
						\frac{d}{dr} \widehat{\Phi_\sigma}(r) = -\sigma r e^{-\sigma^2 r^2/2} \left(2 - \sigma^2 r^2\right).
					\end{equation*}
					Since $\sigma > 0$ and $r \ge 0$, the critical points satisfy $2 - \sigma^2 r^2 = 0$, so
					\begin{equation*}
						r = \frac{\sqrt{2}}{\sigma}.
					\end{equation*}
					For $0 < r < \sqrt{2}/\sigma$, one has $2 - \sigma^2 r^2 > 0$, hence $\widehat{\Phi_\sigma}'(r) < 0$, while for $r > \sqrt{2}/\sigma$ one has $2 - \sigma^2 r^2 < 0$, so $\widehat{\Phi_\sigma}'(r) > 0$. Thus the function decreases on $(0, \sqrt{2}/\sigma)$ and increases on $(\sqrt{2}/\sigma, \infty)$, and the point $r = \sqrt{2}/\sigma$ is a global minimum. This proves (3).
					
					\item Finally,
					\begin{equation*}
						\lim_{|\xi| \to \infty} \widehat{\Phi_\sigma}(\xi) = -\lim_{r \to \infty} \sigma r^2 e^{-\sigma^2 r^2/2} = -\sigma \lim_{r \to \infty} \frac{r^2}{e^{\sigma^2 r^2/2}} = 0,
					\end{equation*}
					since the exponential factor dominates any polynomial. This proves (4).
				\end{enumerate}
				
			\end{proof}
			
			\begin{figure}[htb]
				\centering
				\includegraphics[width=0.72\textwidth]{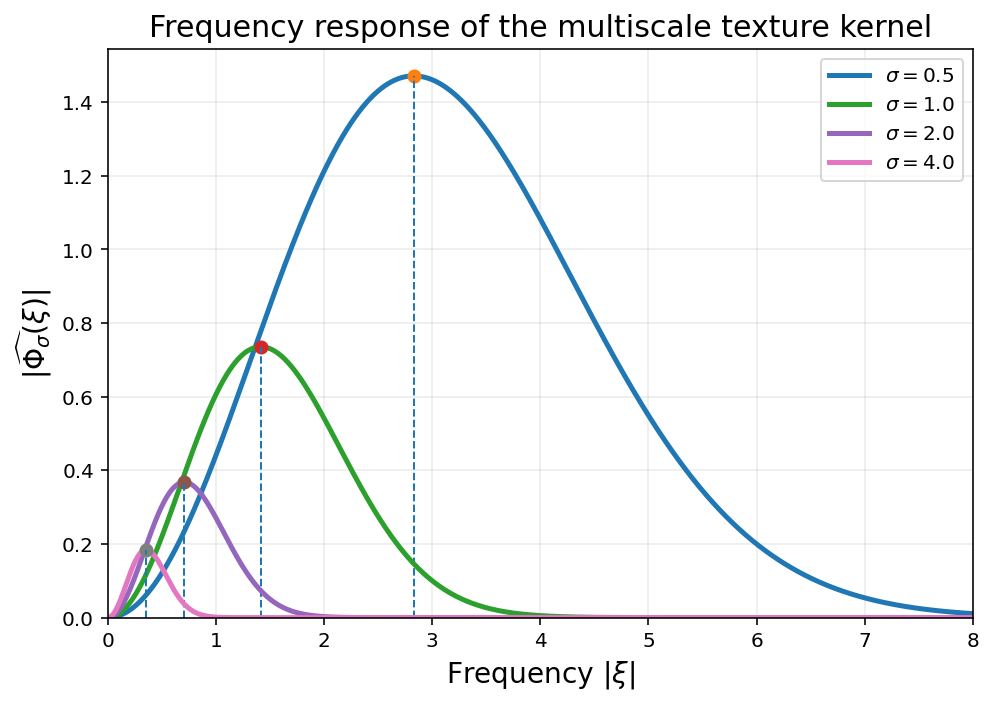}
				\caption{Frequency response of the multiscale texture kernel for several values of the scale parameter $\sigma$. The magnitude of the symbol, $|\widehat{\Phi_\sigma}(\xi)|=\sigma |\xi|^2\exp\!\left(-\frac{\sigma^2|\xi|^2}{2}\right)$, acts as a Gaussian band-pass filter. The dashed vertical lines indicate the characteristic frequency $|\xi|=\sqrt{2}/\sigma$, where the response attains its maximum. Increasing the observation scale shifts the pass band towards lower frequencies while preserving the same qualitative profile.} \label{fig:bandpass}
			\end{figure}
			
			Consequently the operator suppreses both very low and very high frequencies. These spectral properties provide the analytical foundation for the scale-selective behaviour of the texture operator. Their consequences can already be observed in several elementary examples, which we discuss next.
		
		\subsection{Boundedness and smoothing}
			
			In this subsection we show that the texture operator acts boundedly on $L^p$ spaces and, in the Hilbertian setting, exhibits strong regularizing properties. These results follow from classical estimates for the Gaussian kernel and its derivatives and provide the analytical foundation for the multiscale texture spaces introduced later in the paper.
			
			\begin{proposition}\label{prop:Lp-boundedness}
				For every $\sigma>0$ and every $1\le p\le\infty$, the operator $u\mapsto\tau_{\cdot,u}(\sigma)$ defines a bounded linear map on $L^p(\R^n)$. More precisely,
				\begin{equation*}
					\|\tau_{\cdot,u}(\sigma)\|_{L^p}\le C(\sigma)\,\|u\|_{L^p}.
				\end{equation*}
			\end{proposition}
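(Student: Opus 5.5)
The plan is to reduce the statement to Young's inequality through the convolution representation $\tau_{\cdot,u}(\sigma)=u*\Phi_\sigma$ with $\Phi_\sigma=\sigma\Delta\gamma_\sigma$, which follows from Proposition~\ref{prop:diffusion}. All that is then needed is $\Phi_\sigma\in L^1(\R^n)$, with $C(\sigma)=\|\Phi_\sigma\|_{L^1}$. Once this is known, Young's inequality from Section~\ref{sec:preliminares} gives
\begin{equation*}
\|u*\Phi_\sigma\|_{L^p}\le \|\Phi_\sigma\|_{L^1}\|u\|_{L^p}, \qquad 1\le p\le\infty .
\end{equation*}
Linearity is immediate from linearity of convolution.

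The first step is to compute $\|\Phi_\sigma\|_{L^1}$ from the explicit formula for $\Phi_\sigma$. Substituting $x=\sigma y$ gives $\Phi_\sigma(\sigma y)=\sigma^{1-n}(2\pi)^{-n/2}(|y|^2-n)e^{-|y|^2/2}$. The Jacobian contributes $\sigma^n$, so
\begin{equation*}
\|\Phi_\sigma\|_{L^1}=\frac{1}{\sigma}\,\frac{1}{(2\pi)^{n/2}}\int_{\R^n}\bigl||y|^2-n\bigr|e^{-|y|^2/2}\,dy=\frac{c_n}{\sigma}.
\end{equation*}
The constant $c_n$ is finite because the Gaussian moment $\int|y|^2e^{-|y|^2/2}\,dy$ is finite. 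As a crude explicit bound, the triangle inequality gives $c_n\le \mathbb{E}|Y|^2+n=2n$ for a standard Gaussian $Y$. Hence $C(\sigma)=c_n/\sigma\le 2n/\sigma$, which also records the expected scaling of the constant.

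The only step requiring care is justifying $\tau_{\cdot,u}(\sigma)=u*\Phi_\sigma$ for general $u\in L^p$ and not just for Schwartz functions. I would treat it as follows. For fixed $\sigma>0$, the family $\partial_\sigma\gamma_\sigma(z-\cdot)$ is dominated, uniformly for $\sigma$ in a compact subinterval of $(0,\infty)$, by a function of the form $C(1+|z-y|^2)e^{-|z-y|^2/(2\sigma_1^2)}$. This dominating function lies in $L^{p'}$ for every $p'$. Hölder's inequality together with dominated convergence then permits differentiation under the integral sign pointwise in $z$.

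Alternatively, following the paper's remark, one can define the operator on $\mathcal S(\R^n)$, apply the estimate there, and extend by density for $p<\infty$. For $p=\infty$, where density fails, the direct differentiation argument above is the one I would rely on.
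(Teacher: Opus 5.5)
Your proof is correct and is the paper's argument: Young's inequality applied to $\tau_{\cdot,u}(\sigma)=u*\Phi_\sigma$ with $C(\sigma)=\|\Phi_\sigma\|_{L^1}$. Your dominated-convergence justification of that identity for all $u\in L^p$, including $p=\infty$ where density fails, supplies a step the paper only asserts.

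One harmless slip in the auxiliary scaling computation. The correct prefactor of $\Phi_\sigma$ is $\sigma^{-1}(2\pi\sigma^2)^{-n/2}$; the paper's displayed $\sigma(2\pi\sigma^2)^{-n/2}$ is a typo. So $\Phi_\sigma(\sigma y)$ carries $\sigma^{-1-n}$, not $\sigma^{1-n}$, and it is this corrected power that gives your (correct) $c_n/\sigma$ after the Jacobian $\sigma^n$. The boundedness itself is unaffected.
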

			
			\begin{proof}
				By Proposition~\ref{prop:diffusion}, we have $\tau_{\cdot,u}(\sigma)=u*\Phi_\sigma$. Since $\Phi_\sigma\in L^1(\R^n)$ for every $\sigma>0$, Young's inequality yields
				\begin{equation*}
					\|\tau_{\cdot,u}(\sigma)\|_{L^p}=\|u*\Phi_{\sigma}\|_{L^p}\le\|\Phi_{\sigma}\|_{L^1}\,\|u\|_{L^p}.
				\end{equation*}
				The result follows by taking $C(\sigma)=\|\Phi_\sigma\|_{L^1}$.
			\end{proof}
			
			The previous proposition shows that, for each fixed scale $\sigma$, the pointwise multiscale texture operator defines a continuous linear operator on $L^p(\R^n)$. In the particular case $p=2$, the Gaussian factor appearing in the symbol of $\Phi_\sigma$ yields a much stronger property: the operator is infinitely smoothing.
			
			\begin{proposition}[Infinite-order smoothing]\label{prop:infinite-smoothing}
				Let $\sigma>0$ and $k\ge0$. Then the map
				\begin{equation*}
					u\mapsto\tau_{\cdot,u}(\sigma)
				\end{equation*}
				extends to a bounded linear operator
				\begin{equation*}
					L^2(\R^n)\longrightarrow H^k(\R^n).
				\end{equation*}
				Moreover, there exists a constant $C_k(\sigma)>0$ such that
				\begin{equation*}
					\|\tau_{\cdot,u}(\sigma)\|_{H^k}\le C_k(\sigma)\,\|u\|_{L^2},
				\end{equation*}
				for every $u\in L^2(\R^n)$.
			\end{proposition}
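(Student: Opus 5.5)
The plan is to work entirely on the Fourier side, using Plancherel and the explicit symbol from Proposition~\ref{prop:TF_nucleo}. We fix the normalization of the $H^k$ norm as
\begin{equation*}
\|v\|_{H^k}^2=(2\pi)^{-n}\int_{\R^n}(1+|\xi|^2)^k|\hat v(\xi)|^2\,d\xi,
\end{equation*}
consistent with the Fourier convention of Section~\ref{sec:preliminares}, so that Plancherel reads $\|u\|_{L^2}^2=(2\pi)^{-n}\|\hat u\|_{L^2}^2$. We first work with $u\in\mathcal S(\R^n)$. There $\tau_{\cdot,u}(\sigma)=u*\Phi_\sigma$ with $\Phi_\sigma\in\mathcal S$, so $\widehat{\tau_{\cdot,u}(\sigma)}=\widehat{\Phi_\sigma}\,\hat u$. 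This gives
\begin{equation*}
\|\tau_{\cdot,u}(\sigma)\|_{H^k}^2=(2\pi)^{-n}\int_{\R^n}(1+|\xi|^2)^k\,\sigma^2|\xi|^4e^{-\sigma^2|\xi|^2}\,|\hat u(\xi)|^2\,d\xi .
\end{equation*}

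The key step is a uniform bound on the multiplier
\begin{equation*}
m_{k,\sigma}(\xi)=(1+|\xi|^2)^{k/2}\sigma|\xi|^2e^{-\sigma^2|\xi|^2/2}.
\end{equation*}
It is continuous and tends to $0$ as $|\xi|\to\infty$, since the Gaussian dominates every polynomial, exactly as in part (4) of Theorem~\ref{thm:bandpass}. Hence $M_k(\sigma):=\sup_\xi m_{k,\sigma}(\xi)<\infty$. For an explicit constant, substitute $t=\sigma^2|\xi|^2$ and use $1+|\xi|^2\le(1+\sigma^{-2})(1+t)$. This bounds $m_{k,\sigma}$ by $\sigma^{-1}(1+\sigma^{-2})^{k/2}\sup_{t\ge0}(1+t)^{k/2}t\,e^{-t/2}$, and the last supremum is a finite constant depending only on $k$. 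Then $\|\tau_{\cdot,u}(\sigma)\|_{H^k}\le M_k(\sigma)\|u\|_{L^2}$ for all Schwartz $u$, with $C_k(\sigma)=M_k(\sigma)$.

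Finally we extend by density. $\mathcal S$ is dense in $L^2$ and $H^k$ is complete, so the operator extends uniquely to a bounded map $L^2\to H^k$ with the same constant. This extension agrees with $u\mapsto u*\Phi_\sigma$ on all of $L^2$. The reason is that both are continuous into $L^2$: the extension because $H^k\hookrightarrow L^2$, and convolution by Proposition~\ref{prop:Lp-boundedness}. So it coincides with the $L^2$ extension used in the paper. Non-integer $k\ge0$ needs no extra work, since the argument uses only the Fourier definition of $H^k$.

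There is no real obstacle here. The only points needing care are the consistent Fourier normalization in Plancherel and the identification of the extension with the convolution operator. The sup bound is elementary.
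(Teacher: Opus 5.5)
Your proof is correct and follows the same route as the paper: write $\widehat{\tau_{\cdot,u}(\sigma)}=\widehat{\Phi_\sigma}\,\hat u$, bound the multiplier $(1+|\xi|^2)^k|\widehat{\Phi_\sigma}(\xi)|^2$ uniformly in $\xi$, and conclude by Plancherel. Your additional steps are sound refinements of that same argument rather than a different approach: the explicit $(2\pi)^{-n}$ normalization, which the paper glosses over when it writes $\|\widehat u\|_{L^2}=\|u\|_{L^2}$; the explicit constant $\sigma^{-1}(1+\sigma^{-2})^{k/2}\sup_{t\ge0}(1+t)^{k/2}t\,e^{-t/2}$; and the density argument identifying the extension with $u\mapsto u*\Phi_\sigma$.
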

			
			\begin{proof}
				By Proposition~\ref{prop:TF_nucleo},
				\begin{equation*}
					\widehat{\tau_{\cdot,u}(\sigma)}(\xi)=\widehat{\Phi}_{\sigma}(\xi)\,\widehat{u}(\xi),
				\end{equation*}
				where
				\begin{equation*}
					\widehat{\Phi}_{\sigma}(\xi)=-\sigma|\xi|^2 e^{-\sigma^2|\xi|^2/2}.
				\end{equation*}
				Therefore,
				\begin{equation*}
					\|\tau_{\cdot,u}(\sigma)\|_{H^k}^2=\int_{\R^n}(1+|\xi|^2)^k\,|\widehat{\Phi_\sigma}(\xi)|^2\,|\widehat{u}(\xi)|^2\,d\xi.
				\end{equation*}
				Since the multiplier
				\begin{equation*}
					(1+|\xi|^2)^k|\widehat{\Phi_\sigma}(\xi)|^2 = \sigma^2 (1+|\xi|^2)^k |\xi|^4 e^{-\sigma^2|\xi|^2}
				\end{equation*}
				is smooth and decays exponentially as $|\xi|\to\infty$, it is bounded on $\R^n$. Hence,
				\begin{equation*}
					\|\tau_{\cdot,u}(\sigma)\|_{H^k}^2\le\sup_{\xi\in\R^n}\bigl[(1+|\xi|^2)^k|\widehat{\Phi_\sigma}(\xi)|^2\bigr]\int_{\R^n}|\widehat{u}(\xi)|^2\,d\xi.
				\end{equation*}
				Defining
				\begin{equation*}
					C_k(\sigma)^2=\sup_{\xi\in\R^n}(1+|\xi|^2)^k|\widehat{\Phi_\sigma}(\xi)|^2<\infty.
				\end{equation*}
				Applying Plancherel's theorem, we obtain
				\begin{equation*}
					\|\tau_{\cdot,u}(\sigma)\|_{H^k}\le C_k(\sigma)\,\|\widehat{u}\|_{L^2}=C_k(\sigma)\,\|u\|_{L^2},
				\end{equation*}
				which concludes the proof.
			\end{proof}
			
			The infinite-order smoothing property implies that, although the operator is designed to detect textural variations, its output remains highly regular at every fixed scale. This regularity will play a central role in the multiscale constructions developed in the following sections.
			
	\section{Stability and invariance of the texture operator} \label{sec:spectral}

		The results of the previous section establish the basic analytical properties of the texture operator. We now adopt a spectral viewpoint to study its stability and invariance properties. The Fourier representation of the kernel provides a natural framework for quantifying the effect of perturbations, understanding the dependence on the scale parameter, and motivating the multiscale constructions developed in the subsequent sections.
		
		\subsection{Band-pass properties}
		
			The explicit expression of the Fourier symbol
			\begin{equation*}
				\widehat{\Phi_\sigma}(\xi) = -\sigma |\xi|^2 \exp\!\left(-\frac{\sigma^2|\xi|^2}{2}\right)
			\end{equation*}
			reveals the frequency-selective nature of the texture operator. As established in Theorem~\ref{thm:bandpass}, the symbol vanishes only at the origin, is strictly negative for every $\xi\neq0$, attains its largest magnitude at
			\begin{equation*}
				|\xi|=\frac{\sqrt{2}}{\sigma},
			\end{equation*}
			and decays exponentially as $|\xi|\to\infty$. Consequently, the response of the operator is concentrated around frequencies of order $1/\sigma$, providing a precise mathematical explanation of its band-pass behaviour.
			
			For each fixed scale $\sigma$, the operator
			\begin{equation*}
				u\longmapsto u*\Phi_\sigma
			\end{equation*}
			acts as a band-pass filter that emphasizes oscillatory components whose characteristic frequency is proportional to $1/\sigma$, while attenuating both slowly varying components and very high-frequency fluctuations.
			
			This interpretation is particularly relevant from the multiscale viewpoint. The family
			\begin{equation*}
				\{\tau_{\cdot,u}(\sigma)\}_{\sigma>0}
			\end{equation*}
			may be regarded as a continuous decomposition of the texture content of a signal, where each scale probes a specific frequency range. In this sense, the scale parameter plays a role analogous to frequency localization in classical Littlewood--Paley theory.
			
			This spectral perspective motivates the $r$-adic discretization introduced in the next section and, in turn, the construction of the associated multiscale texture spaces.
		
		\subsection{Stability and invariances}
		
			The spectral description developed above also provides a natural framework for understanding the basic invariance and stability properties of the texture operator.
			
			Since the symbol $\widehat{\Phi_\sigma}(\xi)$ is radial, the operator is isotropic and therefore invariant under rotations of the spatial domain. More precisely, for every orthogonal transformation $R\in O(n)$, every function $u$, and every $\sigma>0$, one has
			\begin{equation*}
				\tau_{Rz,u\circ R^{-1}}(\sigma) = \tau_{z,u}(\sigma).
			\end{equation*}
			This follows immediately from the radial symmetry of the kernel $\Phi_\sigma$ together with a change of variables in the convolution integral.
			
			Moreover, the condition $\widehat{\Phi_\sigma}(0)=0$, i.e. $\int \Phi = 0$, implies that $\Phi_\sigma$ has zero mean. Consequently, the operator is invariant under the addition of constants:
			\begin{equation*}
				\tau_{z,u+c}(\sigma) = \tau_{z,u}(\sigma), \qquad c\in\R.
			\end{equation*}
			The boundedness and smoothing results established in Propositions~\ref{prop:Lp-boundedness} and~\ref{prop:infinite-smoothing} immediately imply that the mapping
			\begin{equation*}
				u\mapsto\tau_{\cdot,u}(\sigma)
			\end{equation*}
			depends continuously on the input data in both $L^p$ and Sobolev topologies. In particular, small perturbations of $u$ in $L^2$ produce correspondingly small perturbations of $\tau_{\cdot,u}(\sigma)$ in $H^k$ for every $k\ge0$.
			
			Finally, the band-pass structure of $\widehat{\Phi_\sigma}$ provides an additional form of spectral stability. The Gaussian factor
			\begin{equation*}
				e^{-\sigma^2|\xi|^2/2}
			\end{equation*}
			strongly attenuates high-frequency components, preventing highly oscillatory perturbations from dominating the response. This provides a spectral explanation for the robustness of the operator with respect to high-frequency noise.
			
			\paragraph{Quantitative stability with respect to $L^2$ perturbations.}
			
			The boundedness and smoothing results established in Section~\ref{sec:basicproperties} allow us to quantify the stability of the texture operator under additive perturbations of the input signal.
			
			\begin{proposition}[Lipschitz stability under perturbations]\label{prop:stability_L2}
				Let $u,v\in L^2(\R^n)$ and let $\sigma>0$. Then
				\begin{equation*}
					\|\tau_{\cdot,u}(\sigma)-\tau_{\cdot,v}(\sigma)\|_{L^2} \le C(\sigma)\,\|u-v\|_{L^2},
				\end{equation*}
				where
				\begin{equation*}
					C(\sigma)=\|\Phi_\sigma\|_{L^1}.
				\end{equation*}
				In particular, if $v=u+\eta$, then
				\begin{equation*}
					\|\tau_{\cdot,u+\eta}(\sigma)-\tau_{\cdot,u}(\sigma)\|_{L^2} \le C(\sigma)\,\|\eta\|_{L^2}.
				\end{equation*}
			\end{proposition}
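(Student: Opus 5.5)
The plan is to reduce the estimate to linearity of the operator together with Proposition~\ref{prop:Lp-boundedness} in the case $p=2$. For fixed $\sigma>0$, the map $u\mapsto\tau_{\cdot,u}(\sigma)$ is given by the convolution $u\mapsto u*\Phi_\sigma$. This representation comes from Proposition~\ref{prop:diffusion} and is extended to all of $L^2(\R^n)$ by continuity, as explained after the definition. Since convolution is linear in its first argument, for $u,v\in L^2(\R^n)$ we get
\begin{equation*}
\tau_{\cdot,u}(\sigma)-\tau_{\cdot,v}(\sigma)=u*\Phi_\sigma-v*\Phi_\sigma=(u-v)*\Phi_\sigma=\tau_{\cdot,u-v}(\sigma).
\end{equation*}
On $\mathcal S(\R^n)$ this identity follows directly from linearity of the integral. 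It then passes to $L^2$ through the density argument used to define the extension, because the bounded extension of a linear map is linear.

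Next I apply Young's inequality with $p=2$ to the function $u-v\in L^2(\R^n)$. This requires $\Phi_\sigma\in L^1(\R^n)$. That holds because $\Phi_\sigma$, given explicitly after Proposition~\ref{prop:diffusion}, is a polynomial in $|x|$ times a Gaussian, so it decays rapidly and is integrable, with $\|\Phi_\sigma\|_{L^1}<\infty$. Young's inequality then gives
\begin{equation*}
\|(u-v)*\Phi_\sigma\|_{L^2}\le\|\Phi_\sigma\|_{L^1}\|u-v\|_{L^2},
\end{equation*}
which is the claimed bound with $C(\sigma)=\|\Phi_\sigma\|_{L^1}$. The special case follows by taking $v=u+\eta$, using that $\|{-\eta}\|_{L^2}=\|\eta\|_{L^2}$, and swapping the order of the difference, which is harmless inside the norm.

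The only step needing any care is justifying linearity and the convolution formula on all of $L^2$, rather than only on Schwartz functions. This is routine: for $u\in L^2$ and $\Phi_\sigma\in L^1$ the convolution integral converges absolutely almost everywhere, so the formula holds directly without any limiting argument.

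As a side remark, the Plancherel argument in the proof of Proposition~\ref{prop:infinite-smoothing} gives the sharper constant $\sup_\xi|\widehat{\Phi_\sigma}(\xi)|=2/(e\sigma)$, obtained at $|\xi|=\sqrt2/\sigma$ by Theorem~\ref{thm:bandpass}. This constant is no larger than $\|\Phi_\sigma\|_{L^1}$, so the stated constant is valid but not optimal.
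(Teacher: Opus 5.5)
Your proof is correct and is the same as the paper's: linearity gives $\tau_{\cdot,u}(\sigma)-\tau_{\cdot,v}(\sigma)=(u-v)*\Phi_\sigma$, and Young's inequality with $\Phi_\sigma\in L^1(\R^n)$ then yields the constant $C(\sigma)=\|\Phi_\sigma\|_{L^1}$. Your side remark also holds: by Plancherel, the optimal constant is $\sup_\xi|\widehat{\Phi_\sigma}(\xi)|=2/(e\sigma)$, and this never exceeds $\|\Phi_\sigma\|_{L^1}$.
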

			
			\begin{proof}
				By linearity,
				\begin{equation*}
					\tau_{\cdot,u}(\sigma)-\tau_{\cdot,v}(\sigma) = (u-v)*\Phi_\sigma.
				\end{equation*}
				Applying Young's inequality yields
				\begin{equation*}
					\|(u-v)*\Phi_\sigma\|_{L^2} \le \|\Phi_\sigma\|_{L^1}\,\|u-v\|_{L^2},
				\end{equation*}
				which proves the claim.
			\end{proof}
			
			\begin{remark}
				The constant
				\begin{equation*}
					C(\sigma)=\|\Phi_\sigma\|_{L^1}
				\end{equation*}
				satisfies the scaling law
				\begin{equation*}
					C(\sigma)\sim\sigma^{-1}, \qquad \sigma\to0.
				\end{equation*}
				
				To justify this scaling, note that
				\begin{equation*}
					\Phi_\sigma(x) = \sigma \Delta \gamma_\sigma(x) = \frac{\sigma}{(2\pi\sigma^2)^{n/2}} \left( \frac{|x|^2}{\sigma^2} - n \right) e^{-|x|^2/(2\sigma^2)}.
				\end{equation*}
				Making the change of variables $y = x/\sigma$, so that $dx = \sigma^n dy$, we obtain
				\begin{equation*}
					\|\Phi_\sigma\|_{L^1} = \int_{\mathbb{R}^n} |\Phi_\sigma(x)| \, dx = \sigma^{-1} \int_{\mathbb{R}^n} \frac{1}{(2\pi)^{n/2}} \left| |y|^2 - n \right| e^{-|y|^2/2} \, dy.
				\end{equation*}
				The integral is a finite constant depending only on the dimension $n$:
				\begin{equation*}
					C_n = \frac{1}{(2\pi)^{n/2}} \int_{\mathbb{R}^n} \left| |y|^2 - n \right| e^{-|y|^2/2} \, dy < \infty.
				\end{equation*}
				Hence,
				\begin{equation*}
					\|\Phi_\sigma\|_{L^1} = C_n \, \sigma^{-1},
				\end{equation*}
				which yields the exact identity
				\begin{equation*}
					C(\sigma)=C_n\,\sigma^{-1},
				\end{equation*}
				and, in particular, the asymptotic scaling law $C(\sigma)\sim\sigma^{-1}$ as $\sigma\to0$.
				
				As a simple consequence, if $\eta$ is a random perturbation with finite second moment, then Proposition~\ref{prop:stability_L2} implies
				\begin{equation*}
					\mathbb{E} \Bigl[ \|\tau_{\cdot,u+\eta}(\sigma)-\tau_{\cdot,u}(\sigma)\|_{L^2}^2 \Bigr] \le C(\sigma)^2\, \mathbb{E} \Bigl[ \|\eta\|_{L^2}^2 \Bigr].
				\end{equation*}
				Therefore, the expected perturbation of the texture representation grows at most linearly with the mean energy of the noise.
				
				Moreover, the Gaussian factor appearing in
				\begin{equation*}
					\widehat{\Phi_\sigma}(\xi) = -\sigma |\xi|^2 e^{-\sigma^2|\xi|^2/2}
				\end{equation*}
				exponentially attenuates high-frequency components, providing an additional spectral mechanism that enhances the robustness of the operator with respect to highly oscillatory perturbations.
			\end{remark}
			
			\begin{proposition}[Stability with respect to scale variations]\label{prop:stability_scale}
				Let $u\in L^2(\R^n)$ and let $\sigma,\sigma'>0$. Then
				\begin{equation*}
					\|\tau_{\cdot,u}(\sigma)-\tau_{\cdot,u}(\sigma')\|_{L^2} \le \|u\|_{L^2}\, \|\Phi_\sigma-\Phi_{\sigma'}\|_{L^1}.
				\end{equation*}
				Furthermore, for every compact interval
				\begin{equation*}
					I=[\sigma_{\min},\sigma_{\max}] \subset(0,\infty),
				\end{equation*}
				there exists a constant $L_I>0$, depending only on $I$, such that
				\begin{equation*}
					\|\tau_{\cdot,u}(\sigma)-\tau_{\cdot,u}(\sigma')\|_{L^2} \le L_I\,|\sigma-\sigma'|\, \|u\|_{L^2}
				\end{equation*}
				for all $\sigma,\sigma'\in I$. In particular, the map
				\begin{equation*}
					\sigma\longmapsto\tau_{\cdot,u}(\sigma)
				\end{equation*}
				is continuous from $(0,\infty)$ into $L^2(\R^n)$.
			\end{proposition}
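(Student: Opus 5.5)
The first estimate follows from linearity and Young's inequality. Since $\tau_{\cdot,u}(\sigma)=u*\Phi_\sigma$ (Proposition~\ref{prop:diffusion}), we have
\begin{equation*}
\tau_{\cdot,u}(\sigma)-\tau_{\cdot,u}(\sigma')=u*(\Phi_\sigma-\Phi_{\sigma'}).
\end{equation*}
The difference $\Phi_\sigma-\Phi_{\sigma'}$ lies in $L^1(\R^n)$, being a difference of two $L^1$ kernels. Young's inequality with $p=2$ therefore gives the bound $\|u\|_{L^2}\|\Phi_\sigma-\Phi_{\sigma'}\|_{L^1}$ immediately.

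For the Lipschitz estimate it suffices to show that
\begin{equation*}
\|\Phi_\sigma-\Phi_{\sigma'}\|_{L^1}\le L_I|\sigma-\sigma'| \qquad \text{for } \sigma,\sigma'\in I.
\end{equation*}
The pointwise map $\sigma\mapsto\Phi_\sigma(x)$ is smooth on $(0,\infty)$, so the fundamental theorem of calculus gives
\begin{equation*}
\Phi_\sigma(x)-\Phi_{\sigma'}(x)=\int_{\sigma'}^{\sigma}\partial_s\Phi_s(x)\,ds.
\end{equation*}
Integrating in $x$ and applying Tonelli yields
\begin{equation*}
\|\Phi_\sigma-\Phi_{\sigma'}\|_{L^1}\le|\sigma-\sigma'|\sup_{s\in I}\|\partial_s\Phi_s\|_{L^1}.
\end{equation*}
We can then set $L_I:=\sup_{s\in I}\|\partial_s\Phi_s\|_{L^1}$.

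The main step is to show that $L_I<\infty$. We use the scaling structure $\Phi_s(x)=s^{-1-n}\Psi(x/s)$, where
\begin{equation*}
\Psi(y)=(2\pi)^{-n/2}(|y|^2-n)e^{-|y|^2/2}
\end{equation*}
is a Schwartz function; this is the same change of variables as in the remark following Proposition~\ref{prop:stability_L2}. Differentiating in $s$ gives
\begin{equation*}
\partial_s\Phi_s(x)=-s^{-2-n}\bigl[(n+1)\Psi(y)+y\cdot\nabla\Psi(y)\bigr]_{y=x/s}.
\end{equation*}
After the substitution $y=x/s$ this yields
\begin{equation*}
\|\partial_s\Phi_s\|_{L^1}=s^{-2}\|(n+1)\Psi+y\cdot\nabla\Psi\|_{L^1}=:D_n s^{-2},
\end{equation*}
where $D_n<\infty$ because $\Psi$ is Schwartz. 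Hence $L_I=D_n\sigma_{\min}^{-2}$, which depends only on $I$ (and on $n$).

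Two technical points need care. The first is justifying the pointwise FTC and the interchange of integrals; Tonelli suffices here because the integrand is nonnegative after taking absolute values. The second is to remember that $L_I$ blows up as $\sigma_{\min}\to0$, which is why the statement restricts to compact intervals.

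Continuity of $\sigma\mapsto\tau_{\cdot,u}(\sigma)$ from $(0,\infty)$ into $L^2$ then follows, since every $\sigma>0$ has a compact neighbourhood $I\subset(0,\infty)$ on which the map is Lipschitz.
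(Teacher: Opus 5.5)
Your proof is correct and follows the paper's argument: Young's inequality gives the first bound, and the fundamental theorem of calculus reduces the Lipschitz estimate to finiteness of $\sup_{\sigma\in I}\|\partial_\sigma\Phi_\sigma\|_{L^1}$. The only difference is in that last step: the paper gets finiteness from continuity of $\sigma\mapsto\partial_\sigma\Phi_\sigma$ in $L^1$ plus compactness of $I$, using an $L^1$-valued fundamental theorem of calculus, whereas your pointwise version with Tonelli and the exact scaling identity $\|\partial_s\Phi_s\|_{L^1}=D_n s^{-2}$ yield the explicit constant $L_I=D_n\sigma_{\min}^{-2}$ (and your normalization $\Phi_s(x)=s^{-1-n}\Psi(x/s)$ is the correct one for $\sigma\Delta\gamma_\sigma$, consistent with the paper's remark $\|\Phi_\sigma\|_{L^1}=C_n\sigma^{-1}$).
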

			
			\begin{proof}
				Since
				\begin{equation*}
					\tau_{\cdot,u}(\sigma)-\tau_{\cdot,u}(\sigma') = u*(\Phi_\sigma-\Phi_{\sigma'}),
				\end{equation*}
				Young's inequality immediately yields
				\begin{equation*}
					\|\tau_{\cdot,u}(\sigma)-\tau_{\cdot,u}(\sigma')\|_{L^2} \le \|u\|_{L^2}\, \|\Phi_\sigma-\Phi_{\sigma'}\|_{L^1}.
				\end{equation*}
			
				Indeed, differentiating the explicit expression of $\Phi_\sigma$ shows that $\partial_\sigma\Phi_\sigma$ is again a polynomial in $x/\sigma$ multiplied by the Gaussian $e^{-|x|^2/(2\sigma^2)}$, and therefore belongs to $L^1(\mathbb R^n)$.
			
				The dependence on $\sigma$ is smooth in the explicit expression of $\Phi_\sigma$, and dominated convergence implies that $\sigma\mapsto \partial_\sigma\Phi_\sigma$ is continuous as an $L^1(\mathbb R^n)$-valued map. Consequently,
				\begin{equation*}
					\sigma\longmapsto \|\partial_\sigma\Phi_\sigma\|_{L^1}
				\end{equation*}
				is continuous on $(0,\infty)$. Therefore, for every compact interval
				\begin{equation*}
					I=[\sigma_{\min},\sigma_{\max}],
				\end{equation*}
				the quantity
				\begin{equation*}
					M_I = \sup_{\sigma\in I} \|\partial_\sigma\Phi_\sigma\|_{L^1}
				\end{equation*}
				is finite. Applying the fundamental theorem of calculus in $L^1$ yields
				\begin{equation*}
					\|\Phi_\sigma-\Phi_{\sigma'}\|_{L^1} \le M_I\,|\sigma-\sigma'|, \qquad \sigma,\sigma'\in I.
				\end{equation*}
				Combining the previous estimates yields the desired inequality.
			\end{proof}
		
	\section{$r$-adic discretization and Littlewood-Paley structure}\label{sec:radic}
		
		The continuous family of texture operators
		\begin{equation*}
			\{\tau_{\cdot,u}(\sigma)\}_{\sigma>0}
		\end{equation*}
		provides a multiscale description of the texture content of a signal. For both theoretical analysis and practical computation, it is natural to replace the continuous scale parameter by a discrete geometric sequence.
		
		In this section we introduce an $r$-adic discretization of the scale variable and show that the resulting family of texture filters possesses a wavelet-type dilation structure. This construction establishes a natural bridge between the continuous theory developed in the previous sections and the multiresolution framework that underlies the multiscale texture spaces introduced later in the paper.
		
		\subsection{$r$-adic difference-of-Gaussians filters}
		
			Motivated by the band-pass analysis developed in Section~\ref{sec:spectral}, we discretize the continuous scale variable by means of a geometric progression. Let $r>1$ and define
			\begin{equation*}
				\sigma_s = r^s\sigma_0, \qquad s\in\mathbb{Z}.
			\end{equation*}
			
			The corresponding Gaussian kernels generate a discrete family of band-pass filters by taking finite differences across adjacent scales.
			
			\begin{definition}
				The $r$-adic \emph{difference-of-Gaussians} (DoG) filter is defined by
				\begin{equation*}
					\Phi_s(x)=\gamma_{\sigma_s}(x)-\gamma_{\sigma_{s-1}}(x).
				\end{equation*}
				The associated discretized texture operator is
				\begin{equation*}
					\tau^{(r)}_{z,u}(\sigma_s)=(u*\Phi_s)(z).
				\end{equation*}
			\end{definition}
			
			The family $\{\Phi_s\}_{s\in\mathbb{Z}}$ provides a discrete approximation of the continuous scale derivative by sampling the scale parameter on a geometric grid of ratio $r$. The following proposition shows that these filters are generated by geometric dilations of a single mother filter, which is a fundamental structural feature underlying wavelet-type constructions.
			
			\begin{proposition}[Dilation property]\label{prop:dilation}
				For every $x\in\R^n$ and every $s\in\mathbb{Z}$, one has
				\begin{equation*}
					\Phi_{s+1}(x)=r^{-n}\,\Phi_s(x/r).
				\end{equation*}
			\end{proposition}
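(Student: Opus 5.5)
The plan is to first establish the elementary dilation identity for a single Gaussian, and then subtract two instances of it. The key observation is that $\gamma_{r\sigma}$ is the $L^1$-normalized dilation of $\gamma_\sigma$. Concretely, for every $\sigma>0$ and $x\in\R^n$, the explicit formula from Section~\ref{sec:preliminares} gives
\begin{equation*}
	\gamma_{r\sigma}(x)=\frac{1}{(2\pi r^2\sigma^2)^{n/2}}\exp\!\left(-\frac{|x|^2}{2r^2\sigma^2}\right)
	=r^{-n}\,\frac{1}{(2\pi\sigma^2)^{n/2}}\exp\!\left(-\frac{|x/r|^2}{2\sigma^2}\right)=r^{-n}\gamma_\sigma(x/r).
\end{equation*}
This requires only pulling the factor $r^{2}$ out of $(2\pi r^2\sigma^2)^{n/2}$, which produces $r^{n}$, and rewriting $|x|^2/r^2=|x/r|^2$.

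Next, I would apply this identity with $\sigma=\sigma_s$ and with $\sigma=\sigma_{s-1}$. Since $\sigma_{s+1}=r\sigma_s$ and $\sigma_s=r\sigma_{s-1}$ by the definition $\sigma_s=r^s\sigma_0$, we obtain $\gamma_{\sigma_{s+1}}(x)=r^{-n}\gamma_{\sigma_s}(x/r)$ and $\gamma_{\sigma_s}(x)=r^{-n}\gamma_{\sigma_{s-1}}(x/r)$.

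Finally, subtracting these two identities gives
\begin{equation*}
	\Phi_{s+1}(x)=\gamma_{\sigma_{s+1}}(x)-\gamma_{\sigma_s}(x)=r^{-n}\bigl(\gamma_{\sigma_s}(x/r)-\gamma_{\sigma_{s-1}}(x/r)\bigr)=r^{-n}\Phi_s(x/r),
\end{equation*}
which is the claim.

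There is no genuine obstacle here. The only point requiring care is bookkeeping the normalization exponent $r^{-n}$, which must come from the $\sigma^{-n}$ prefactor and not from the exponential. It may also be worth noting as a consistency check that the identity holds for all $s\in\mathbb Z$, including negative $s$, since only the ratio $\sigma_{s}/\sigma_{s-1}=r$ is used. An equivalent Fourier-side check would be $\widehat{\Phi_{s+1}}(\xi)=\widehat{\Phi_s}(r\xi)$, which follows from $\hat\gamma_\sigma(\xi)=e^{-\sigma^2|\xi|^2/2}$.
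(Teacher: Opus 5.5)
Your proof is correct and is the paper's argument: you apply the Gaussian scaling identity $\gamma_{r\sigma}(x)=r^{-n}\gamma_\sigma(x/r)$ at $\sigma_s$ and at $\sigma_{s-1}$, then subtract the two identities. The only difference is that you verify the scaling identity explicitly from the formula for $\gamma_\sigma$, whereas the paper just cites it.
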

			
			\begin{proof}
				Using the scaling property of the Gaussian kernel, we have
				\begin{equation*}
					\gamma_{\sigma_{s+1}}(x) = r^{-n}\gamma_{\sigma_s}(x/r),
				\end{equation*}
				which follows from the definition of $\gamma_\sigma$ and the relation $\sigma_{s+1}=r\sigma_s$. Similarly,
				\begin{equation*}
					\gamma_{\sigma_s}(x) = r^{-n}\gamma_{\sigma_{s-1}}(x/r).
				\end{equation*}
				Therefore,
				\begin{equation*}
					\Phi_{s+1}(x)
					=
					\gamma_{\sigma_{s+1}}(x)-\gamma_{\sigma_s}(x)
					=
					r^{-n}\gamma_{\sigma_s}(x/r)-r^{-n}\gamma_{\sigma_{s-1}}(x/r)
					=
					r^{-n}\Phi_s(x/r),
				\end{equation*}
				which proves the claim.
			\end{proof}
			
			The previous identity shows that the family $\{\Phi_s\}_{s\in\mathbb{Z}}$ is generated from a single mother filter through geometric dilations. Consequently, the family possesses the characteristic dilation structure of wavelet constructions. This observation provides the first indication that the texture framework developed here admits a natural discrete counterpart closely related to wavelet and frame theories. The corresponding frame properties will be established in the following subsection.
		
		\subsection{Wavelet-type interpretation.}
		
			The discrete family $\{\Phi_s\}_{s\in\mathbb{Z}}$ can be viewed as a natural discretization of the continuous texture operator. Indeed, the finite difference
			\begin{equation*}
				\Phi_s = \gamma_{\sigma_s} - \gamma_{\sigma_{s-1}}
			\end{equation*}
			approximates the derivative of the Gaussian scale-space with respect to the scale parameter. Since the Gaussian kernel satisfies
			\begin{equation*}
				\partial_\sigma\gamma_\sigma = \sigma\Delta\gamma_\sigma,
			\end{equation*}
			a first-order Taylor expansion gives
			\begin{equation*}
				\gamma_{\sigma_s} - \gamma_{\sigma_{s-1}} = (\sigma_s-\sigma_{s-1}) \,\partial_\sigma\gamma_{\sigma_{s-1}} + O\!\left((\sigma_s-\sigma_{s-1})^2\right).
			\end{equation*}
			
			Consequently, when $r$ is close to $1$, the discrete filter $\Phi_s$ provides a finite-difference approximation of the continuous texture kernel
			\begin{equation*}
				\Phi_{\sigma_s} = \sigma_s\Delta\gamma_{\sigma_s} = \partial_\sigma\gamma_{\sigma_s},
			\end{equation*}
			up to a multiplicative factor $\sigma_s-\sigma_{s-1}$ and an error of order $O\bigl((\sigma_s-\sigma_{s-1})^2\bigr)$. In particular, the discrete filters inherit the qualitative spectral properties of the continuous operator, including their band-pass behaviour and their localization around frequencies of order $\sigma_s^{-1}$.
			
			Passing to the Fourier domain, the dilation property established in Proposition~\ref{prop:dilation} implies
			\begin{equation*}
				\widehat{\Phi_{s+1}}(\xi) = \widehat{\Phi_s}(r\xi).
			\end{equation*}
			Thus, the spectrum of each filter is obtained from that of the preceding one by a geometric contraction of the frequency axis with ratio $r$. As a consequence, the family
			\begin{equation*}
				\{\widehat{\Phi_s}\}_{s\in\mathbb{Z}}
			\end{equation*}
			provides a logarithmic tiling of the frequency domain analogous to that arising in wavelet and Littlewood--Paley constructions.
			
			From this perspective, each filter $\Phi_s$ probes a frequency range centered around frequencies proportional to $r^{-s}$ while preserving a comparable relative bandwidth across successive levels. This constant-$Q$ behaviour is a characteristic feature of wavelet filter banks and motivates the interpretation of the family $\{\Phi_s\}_{s\in\mathbb{Z}}$ as a wavelet-type system adapted to multiscale texture analysis.
			
			The heuristic observations above will be made precise in the next result, where we establish a Littlewood--Paley type estimate for the discrete texture decomposition.
			
			These observations are intended to provide an intuitive motivation for the discrete construction. The rigorous multiscale properties of the family are established in the next subsection.
		
		\subsection{Multiresolution analysis}
			The dilation property established in Proposition~\ref{prop:dilation} suggests interpreting the family
			\begin{equation*}
				\{\Phi_s\}_{s\in\mathbb{Z}}
			\end{equation*}
			as a multiscale system of wavelet-type filters. The components
			\begin{equation*}
				u_s = u*\Phi_s
			\end{equation*}
			capture the contribution of $u$ within the frequency band selected by the filter $\Phi_s$. Since the associated frequency bands cover the frequency space on a logarithmic scale with controlled overlap, the family
			\begin{equation*}
				\{u_s\}_{s\in\mathbb{Z}}
			\end{equation*}
			provides a stable decomposition of $u$ into multiscale texture components.
			
			To quantify the stability of this decomposition, it is natural to analyze how the spectral energy is distributed across the family of discrete filters. Throughout this subsection, $\Phi_s$ denotes the discrete difference-of-Gaussians filters introduced above, and should not be confused with the continuous kernel $\Phi_\sigma=\sigma\Delta\gamma_\sigma$. The key object is the function
			\begin{equation*}
				F(\xi) = \sum_{s\in\mathbb{Z}} |\widehat{\Phi_0}(r^s\xi)|^2, \qquad \xi\neq0,
			\end{equation*}
			which measures the cumulative contribution of all frequency bands around the frequency $\xi$. The dilation property and the band-pass behaviour of $\Phi_0$ imply that the family provides a logarithmic covering of the frequency space by overlapping frequency bands. The following lemma makes this statement precise.
			
			\begin{lemma}\label{lem:technical}
				Let $r>1$ and $\alpha>0$. Then:
				\begin{enumerate}
					\item[(i)] There exists a constant $C_1>0$ such that
					\begin{equation*}
						\sup_{\xi\in\mathbb{R}} \sum_{s=0}^{\infty} r^{4s}|\xi|^{4} e^{-2\alpha r^{2s}|\xi|^{2}} \le C_1.
					\end{equation*}
					
					\item[(ii)]
					For every compact set $K\subset\mathbb{R}^n$ there exists a constant $C_2(K)>0$ such that
					\begin{equation*}
						\sup_{\xi\in K} \sum_{s=1}^{\infty} r^{-4s}|\xi|^{4} \le C_2(K).
					\end{equation*}
				\end{enumerate}
				
			\end{lemma}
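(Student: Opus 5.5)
Part (ii) is immediate, so I would dispose of it first. Since $r>1$, the geometric series $\sum_{s\ge1} r^{-4s} = r^{-4}/(1-r^{-4}) = 1/(r^4-1)$ converges. For a compact set $K$ we have $R_K:=\sup_{\xi\in K}|\xi|<\infty$, and therefore
\begin{equation*}
\sup_{\xi\in K}\sum_{s=1}^\infty r^{-4s}|\xi|^4 \le \frac{R_K^4}{r^4-1} =: C_2(K).
\end{equation*}

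Part (i) is the real content. I would set $t=|\xi|^2$ and $a_s = r^{2s}t$, so that the general term becomes $h(a_s)$ with $h(a)=a^2e^{-2\alpha a}$. For $\xi=0$ the sum vanishes, so assume $t>0$. The key structural fact is that the points $a_s$ form a geometric sequence of ratio $r^2>1$. I would split the index set at the threshold $a_s\le 1$ versus $a_s>1$.

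For the indices with $a_s\le1$ I use $h(a)\le a^2$. The values $a_s\le 1$ are geometric with ratio $r^2$, so listing them downward from the largest one (which is at most $1$), their squares are bounded by $1, r^{-4}, r^{-8},\dots$. This part of the sum is therefore at most $1/(1-r^{-4})$.

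For the indices with $a_s>1$ I use $h(a)\le C_\alpha e^{-\alpha a}$, where $C_\alpha=\sup_{a>0} a^2e^{-\alpha a}=4/(e\alpha)^2$. Listing these $a_s$ upward from the smallest one, which exceeds $1$, gives $a_{s_0+k}\ge r^{2k}\ge 1+k(r^2-1)$ by Bernoulli's inequality. Hence this part is at most $C_\alpha e^{-\alpha}\sum_{k\ge0}e^{-\alpha k(r^2-1)} = C_\alpha e^{-\alpha}/(1-e^{-\alpha(r^2-1)})$. Both bounds are independent of $\xi$, so adding them gives $C_1$. Restricting the sum to $s\ge0$ only decreases it, so the same bound also covers the full sum over $s\in\mathbb{Z}$, which is what the later analysis of $F(\xi)$ needs.

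The only delicate point is getting uniformity in $\xi$, meaning the bound must not depend on where the first index $s_0$ lands. The splitting at $a_s=1$, combined with the geometric spacing of the $a_s$, handles exactly this. I would also note that although the statement writes $\xi\in\mathbb{R}$, the argument depends only on $|\xi|$ and so holds verbatim for $\xi\in\mathbb{R}^n$.
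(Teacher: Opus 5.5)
Your proof is correct. Part (ii) is the same as the paper's argument. For part (i) you take a genuinely different route.

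The paper writes the terms as $f(t_s)$ with $f(t)=t^2e^{-2\alpha t}$. It notes that $f$ is unimodal with peak at $t=1/\alpha$ and that $\int_0^\infty f(t)\,dt/t<\infty$. It then passes to $u=\log t$, where the sample points $u_s=\log|\xi|^2+2s\log r$ are equally spaced with step $2\log r$, and uses monotonicity on each side of the peak to dominate the sum by Riemann sums of that integral.

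You instead split at the fixed threshold $a_s=1$ and bound each piece by an explicit geometric series. Below the threshold you use $h(a)\le a^2$; above it you use $h(a)\le C_\alpha e^{-\alpha a}$ together with Bernoulli's inequality.

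What your version buys:
\begin{itemize}
\item It is fully elementary.
\item It gives the explicit constant $C_1=(1-r^{-4})^{-1}+4e^{-2-\alpha}\alpha^{-2}\bigl(1-e^{-\alpha(r^2-1)}\bigr)^{-1}$.
\item It avoids bookkeeping the paper leaves implicit. On each monotone branch, the sample next to the peak is not controlled by the integral and must be bounded separately by $\sup f$, so the correct bound there is $(2\log r)^{-1}\int_0^\infty f(t)\,dt/t+2\sup f$.
\end{itemize}

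What the paper's comparison buys: it makes the scale-invariant, logarithmic-covering nature of the estimate transparent. It also applies verbatim to any bounded unimodal profile integrable against $dt/t$. Your argument, by contrast, uses the specific vanishing of $h$ at $0$ and its decay at $\infty$.

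Your closing remark about the two-sided sum over $s\in\mathbb{Z}$ is apt. Since $|\widehat{\Phi_0}(\eta)|^2\le C|\eta|^4e^{-2\alpha|\eta|^2}$ holds for every $\eta$, a two-sided version of (i) gives a uniform upper bound for $F$ on all of $\mathbb{R}^n\setminus\{0\}$. Part (ii), being only local, cannot give this.

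However, the justification you wrote runs backwards. That restricting to $s\ge0$ decreases the sum does not show your bound covers the full sum over $s\in\mathbb{Z}$. The correct reason is that your splitting argument never uses the lower limit $s=0$, so it bounds the two-sided sum directly, and the one-sided sum is then a fortiori bounded.
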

			
			\begin{proof}
				
				\begin{enumerate}
					\item[(i)] Since the case $\xi=0$ is trivial, we only need to consider $\xi\neq 0$.
				
					We consider the function $f(t)=t^{2}e^{-2\alpha t}$ for $t\geqslant 0$. This function is positive, differentiable, satisfies $f(0)=0$, and $\lim_{t\rightarrow \infty}f(t)=0$. Therefore it attains a maximum at some point in the interval $(0,\infty)$. Differentiating,
					\begin{equation*}
						f'(t)=2te^{-2\alpha t}\left(1-\alpha t\right),
					\end{equation*}
					so $f$ has a unique maximum at $t_{M}=\frac{1}{\alpha}$.
				
					Now, for a fixed $\xi\neq 0$, we consider the sequence $t_{s}=r^{2s}\lvert\xi\rvert^{2}$, $s=0,1,2,\dots$. Then
					\begin{equation*}
						\sum_{s=0}^{\infty}r^{4s}\lvert\xi\rvert^{4}e^{-2\alpha r^{2s}\lvert\xi\rvert^{2}} =\sum_{s=0}^{\infty}f(t_{s}) =\sum_{t_{s}<\frac{1}{\alpha}}f(t_{s}) +\sum_{t_{s}\geqslant\frac{1}{\alpha}}f(t_{s}).
					\end{equation*}
				
					Since
					\begin{equation*}
						\int_{0}^{\infty}f(t)\,\frac{dt}{t}<\infty,
					\end{equation*}
					the change of variables $u=\log t$ transforms this integral into
					\begin{equation*}
						\int_{-\infty}^{\infty}f(e^u)\,du.
					\end{equation*}
					Moreover,
					\begin{equation*}
						u_s=\log t_s=\log|\xi|^2+2s\log r,
					\end{equation*}
					so that $\{u_s\}_{s\ge0}$ is a uniform partition of the real line with constant step $2\log r$. Since $f$ is monotone on each side of its unique maximum, the sums over both monotone branches are comparable with the corresponding upper Riemann sums associated with this partition. Consequently,
					\begin{equation*}
						\sum_{s=0}^{\infty}f(t_s)\le C_1(\alpha,r),
					\end{equation*}
					where $C_1(\alpha,r)$ is independent of $\xi$.
					
					\item[(ii)] The proof of (ii) is immediate since
					\begin{equation*}
						\sum_{s=1}^{\infty}r^{-4s}
					\end{equation*}
					is a convergent geometric series and, on other hand, $K$ is compact, there exists
					\begin{equation*}
						C_{2}'(K)=\max_{x\in K}\lvert x\rvert
					\end{equation*}
					such that
					\begin{equation*}
						\sup_{\xi\in K}\sum_{s=1}^{\infty}r^{-4s}\lvert\xi\rvert^{4} \leqslant \left(C_{2}'(K)\right)^{4}\sum_{s=1}^{\infty}r^{-4s} =C_{2}(K)<\infty.
					\end{equation*}
				\end{enumerate}
			\end{proof}
				
			\begin{lemma}[Uniform logarithmic covering]\label{lem:covering}
				Let
				\begin{equation*}
					F(\xi) = \sum_{s\in\mathbb{Z}} |\widehat{\Phi_0}(r^s\xi)|^2, \qquad \xi\in\mathbb{R}^n\setminus\{0\}.
				\end{equation*}
				Then the series defining $F$ converges absolutely and uniformly on compact subsets of $\mathbb{R}^n\setminus\{0\}$. Moreover, there exist constants
				\begin{equation*}
					0<c\le C<\infty
				\end{equation*}
				such that
				\begin{equation*}
					c\le F(\xi)\le C, \qquad \forall\,\xi\neq0.
				\end{equation*}
				
				In particular, the family yields a Littlewood--Paley partition of unity in energy up to multiplicative constants.
			\end{lemma}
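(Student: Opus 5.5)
Writing $t=|\xi|$, the Fourier symbol of $\Phi_0$ follows from $\widehat{\gamma_\sigma}(\xi)=e^{-\sigma^2|\xi|^2/2}$:
\begin{equation*}
\widehat{\Phi_0}(\xi)=e^{-\sigma_0^2 t^2/2}-e^{-\sigma_0^2 t^2/(2r^2)}=:m(t).
\end{equation*}
The function $m$ is radial and continuous, with $m(0)=0$ and $m(t)\neq0$ for every $t>0$ because $r>1$. By the Fourier form of Proposition~\ref{prop:dilation}, $\widehat{\Phi_s}(\xi)=\widehat{\Phi_0}(r^s\xi)$, so $F(\xi)=\sum_{s\in\mathbb Z}m(r^s t)^2$. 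This gives the key invariance $F(r\xi)=F(\xi)$ by reindexing. Granted convergence, both bounds therefore reduce to bounds on the compact annulus $A=\{1\le|\xi|\le r\}$.

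\textbf{Convergence.} Split the sum into $s\ge0$ and $s<0$.

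For $s\ge0$ (large argument), use $|m(t)|\le e^{-\sigma_0^2t^2/(2r^2)}\le C\,t^2e^{-\alpha t^2}$ with $\alpha=\sigma_0^2/(4r^2)$. The tail then takes the form $\sum_{s\ge0}r^{4s}t^4e^{-2\alpha r^{2s}t^2}$, which Lemma~\ref{lem:technical}(i) bounds uniformly in $\xi$. On compact sets the terms decay superexponentially, so the M-test gives uniform convergence there.

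For $s<0$ (small argument), the mean value theorem gives $|m(t)|\le \tfrac{\sigma_0^2}{2}(1-r^{-2})\,t^2$. Thus $m(r^{-k}t)^2\le C r^{-4k}t^4$, and Lemma~\ref{lem:technical}(ii) controls $\sum_{k\ge1}r^{-4k}|\xi|^4$ uniformly on compacts. Together these give absolute and locally uniform convergence on $\mathbb R^n\setminus\{0\}$. In particular $F$ is continuous there.

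\textbf{Upper bound.} Once continuity is known, $F$ is bounded on the compact annulus $A$. Dilation invariance then transfers this bound to all $\xi\ne0$, since every $\xi\neq0$ equals $r^k\eta$ for some $\eta\in A$ and $k\in\mathbb Z$. Alternatively, the bounds above can be made global by applying the Riemann-sum argument of Lemma~\ref{lem:technical}(i) to both tails.

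\textbf{Lower bound.} Here I keep a single term: $F(\xi)\ge m(|\xi|)^2$. Since $m$ is continuous and nonvanishing on $[1,r]$, we get $\min_A m^2=c>0$. Invariance under $\xi\mapsto r\xi$ then gives $F\ge c$ everywhere.

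The only delicate point I expect is the uniformity of the $s\ge0$ tail, which is exactly what Lemma~\ref{lem:technical}(i) supplies. Making the domination $|m(t)|\le C t^2e^{-\alpha t^2}$ valid for all $t\ge0$ needs a small check near $t=0$. There $m$ is $O(t^2)$, as shown in the convergence step, so the constant $C$ can be chosen uniformly.
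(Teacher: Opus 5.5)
Your proof is correct, and its convergence half is the paper's. Both split the sum into $s\ge0$ and $s<0$, feed a bound of the form $C\,r^{4s}|\xi|^4e^{-2\alpha r^{2s}|\xi|^2}$ into Lemma~\ref{lem:technical}(i), and feed $C\,r^{-4s}|\xi|^4$ into Lemma~\ref{lem:technical}(ii).

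The paper gets the first bound globally in one line. It factors $\widehat{\Phi_0}(\eta)=e^{-\sigma_0^2|\eta|^2/(2r^2)}\bigl(e^{-(1-r^{-2})\sigma_0^2|\eta|^2/2}-1\bigr)$ and uses $1-e^{-t}\le t$. This avoids the small-$t$ patch you needed, since your intermediate inequality $e^{-\sigma_0^2t^2/(2r^2)}\le C\,t^2e^{-\alpha t^2}$ is false near $t=0$ as written.

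The genuine difference is in the two-sided bound. The paper chooses an annulus $a\le|\xi|\le b$ on which $|\widehat{\Phi_0}|\ge m$. It asserts that the dilates of this annulus cover $\mathbb{R}^n\setminus\{0\}$ with bounded overlap, and concludes that one summand is at least $m^2$ while ``only finitely many summands can contribute''. You instead use the exact invariance $F(r\xi)=F(\xi)$ and the continuity of $F$ (from locally uniform convergence). This reduces both bounds to the compact annulus $\{1\le|\xi|\le r\}$.

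Your route is the tighter one, for two reasons. First, the covering argument needs $b/a\ge r$. That requires exactly the nonvanishing of $\widehat{\Phi_0}$ off the origin that you invoke; the paper's ``not identically zero'' does not suffice. Second, the paper's upper bound is not really justified as written. Every summand is nonzero, and Lemma~\ref{lem:technical}(ii) controls $F_-$ only on compact sets. Some global argument is therefore needed, such as your dilation invariance or a two-sided Riemann-sum estimate.

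You are also right to claim uniform convergence only on compacts. The paper's remark that $F_+$ converges uniformly on all of $\mathbb{R}^n$ overstates what Lemma~\ref{lem:technical}(i) gives: it yields a uniform bound, not uniformly small tails.
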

			
			\begin{proof}[Proof of Lemma~\ref{lem:covering}]
				
				Write
				\begin{equation*}
					F(\xi)=F_+(\xi)+F_-(\xi),
				\end{equation*}
				where
				\begin{equation*}
					F_+(\xi) = \sum_{s=0}^{\infty} |\widehat{\Phi_0}(r^s\xi)|^2, \qquad F_-(\xi) = \sum_{s=1}^{\infty} |\widehat{\Phi_0}(r^{-s}\xi)|^2.
				\end{equation*}
				
				Since
				\begin{equation*}
					\widehat{\Phi_0}(\eta) = e^{-r^{-2}\frac{\sigma_0^2|\eta|^2}{2}} \left( e^{-(1-r^{-2})\frac{\sigma_0^2|\eta|^2}{2}} -1 \right),
				\end{equation*}
				the inequality
				\begin{equation*}
					1-e^{-t}\le t, \qquad t\ge0,
				\end{equation*}
				gives
				\begin{equation*}
					|\widehat{\Phi_0}(\eta)| \le \frac{(1-r^{-2})\sigma_0^2}{2} |\eta|^2 e^{-r^{-2}\frac{\sigma_0^2|\eta|^2}{2}}.
				\end{equation*}
				Hence
				\begin{equation*}
					|\widehat{\Phi_0}(r^s\xi)|^2 \le C\,r^{4s}|\xi|^4 e^{-2\alpha r^{2s}|\xi|^2},
				\end{equation*}
				where
				\begin{equation*}
					C=\frac{(1-r^{-2})^2\sigma_0^4}{4}, \qquad \alpha=\frac{\sigma_0^2}{2r^2}.
				\end{equation*}
				Lemma~\ref{lem:technical}(i) therefore implies that $F_+$ converges uniformly on $\mathbb{R}^n$.
				
				On the other hand,
				\begin{equation*}
					|\widehat{\Phi_0}(r^{-s}\xi)|^2 \le C\,r^{-4s}|\xi|^4,
				\end{equation*}
				and Lemma~\ref{lem:technical}(ii) yields the uniform convergence of $F_-$ on compact subsets of $\mathbb{R}^n$.
				
				Therefore the series defining $F$ converges absolutely and uniformly on compact subsets of $\mathbb{R}^n\setminus\{0\}$.
				
				The lower and upper frame bounds follow from the positivity of $|\widehat{\Phi_0}|$, the logarithmic covering of the frequency domain induced by the dilations $r^s$, and the fact that $|\widehat{\Phi_0}|$ is strictly positive on an annulus.
				
				Indeed, since $\widehat{\Phi_0}$ is continuous and not identically zero, there exist $0<a<b<\infty$ and $m>0$ such that
				\begin{equation*}
					|\widehat{\Phi_0}(\xi)|\ge m, \qquad a\le|\xi|\le b.
				\end{equation*}
				The dilated annuli
				\begin{equation*}
					\{\,\xi\in\mathbb{R}^n:\; ar^{-s}\le |\xi|\le br^{-s}\,\}, \qquad s\in\mathbb Z,
				\end{equation*}
				form a logarithmic covering of $\mathbb{R}^n\setminus\{0\}$. Hence every nonzero frequency belongs to at least one such annulus and to at most a uniformly bounded number of them, where the bound depends only on $r$ and on the ratio $b/a$.
				
				Therefore at least one summand in the definition of $F(\xi)$ is bounded below by $m^2$, while only finitely many summands can contribute at each frequency. This yields the existence of constants
				\begin{equation*}
					0<c\le C<\infty
				\end{equation*}
				such that
				\begin{equation*}
					c \le F(\xi) \le C, \qquad \xi\neq0.
				\end{equation*}
				
			\end{proof}
			
			The uniform bounds obtained in the previous lemma imply that the total energy of a signal can be recovered, up to multiplicative constants, from the sum of the energies of its multiscale components. This constitutes a discrete version of the Littlewood--Paley principle and provides the fundamental stability estimate for the discrete texture decomposition.
			
			\begin{theorem}[Littlewood--Paley estimate]\label{thm:casi-isometria}
				There exist constants
				\begin{equation*}
					0<c\le C<\infty,
				\end{equation*}
				depending only on $r$ and $\sigma_0$, such that every $u\in L^2(\R^n)$ satisfies
				\begin{equation*}
					c\,\|u\|_{L^2}^2 \le \sum_{s\in\mathbb{Z}}\|u_s\|_{L^2}^2 \le C\,\|u\|_{L^2}^2.
				\end{equation*}
			\end{theorem}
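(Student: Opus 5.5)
The plan is to reduce the statement to the uniform covering bounds of Lemma~\ref{lem:covering} using Plancherel's theorem and the dilation relation in frequency. Fix $u\in L^2(\R^n)$. Since $\Phi_s\in L^1(\R^n)$ (a difference of two Gaussians), Young's inequality gives $u_s=u*\Phi_s\in L^2(\R^n)$ and $\widehat{u_s}=\widehat{\Phi_s}\,\widehat u$. Plancherel's theorem, with the constant from our Fourier normalization absorbed into both sides, then yields
\begin{equation*}
	\|u_s\|_{L^2}^2=(2\pi)^{-n}\int_{\R^n}|\widehat{\Phi_s}(\xi)|^2|\widehat u(\xi)|^2\,d\xi .
\end{equation*}
Iterating the relation $\widehat{\Phi_{s+1}}(\xi)=\widehat{\Phi_s}(r\xi)$, which follows from Proposition~\ref{prop:dilation}, gives $\widehat{\Phi_s}(\xi)=\widehat{\Phi_0}(r^s\xi)$ for every $s\in\mathbb Z$; for negative $s$ we read the relation backwards.

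Next we sum over $s$. All integrands are nonnegative, so Tonelli's theorem allows the sum and the integral to be interchanged:
\begin{equation*}
	\sum_{s\in\mathbb Z}\|u_s\|_{L^2}^2=(2\pi)^{-n}\int_{\R^n}F(\xi)\,|\widehat u(\xi)|^2\,d\xi ,
\end{equation*}
where $F$ is the function of Lemma~\ref{lem:covering}. The single point $\xi=0$ has Lebesgue measure zero, so it can be ignored. Applying $c\le F\le C$ on $\R^n\setminus\{0\}$ and Plancherel's theorem once more gives both inequalities, with the same constants $c$ and $C$ as in the lemma. These constants depend only on $r$ and $\sigma_0$, because $\widehat{\Phi_0}(\eta)=e^{-\sigma_0^2|\eta|^2/2}-e^{-r^{-2}\sigma_0^2|\eta|^2/2}$ depends only on those parameters.

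The main obstacle is that the argument rests entirely on the two-sided bound of Lemma~\ref{lem:covering}, and its upper-bound proof, as written, only gives uniform convergence on compact sets. To make the upper bound fully explicit and independent of $\xi$, I would check it directly. By the dilation invariance $F(r\xi)=F(\xi)$, it suffices to bound $F$ on the annulus $1\le|\xi|\le r$.

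On that annulus, $F_+$ is bounded uniformly by Lemma~\ref{lem:technical}(i). For $F_-$, I would use the estimate $|\widehat{\Phi_0}(\eta)|\le C'|\eta|^2$ together with $|\xi|\le r$, which gives the geometric bound $\sum_{s\ge1}C'^2r^{4-4s}<\infty$. The lower bound uses the same reduction: on the compact annulus $1\le|\xi|\le r$, the function $F$ is continuous and strictly positive, since $\widehat{\Phi_0}(\eta)\neq0$ for $\eta\neq0$. Hence $F$ attains a positive minimum there, and by dilation invariance this minimum bounds $F$ from below on all of $\R^n\setminus\{0\}$.
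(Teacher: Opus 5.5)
Your proposal is correct and follows the paper's proof: Plancherel, the dilation identity $\widehat{\Phi_s}(\xi)=\widehat{\Phi_0}(r^s\xi)$, Tonelli to exchange sum and integral, and then the bounds $c\le F\le C$ of Lemma~\ref{lem:covering}, with the $(2\pi)^{-n}$ normalization tracked more carefully than in the paper. Your extra verification of $c\le F\le C$ is a cleaner and more rigorous substitute for the paper's argument that ``only finitely many summands contribute'', which is not literally accurate because the Gaussian symbols never vanish: you use the invariance $F(r\xi)=F(\xi)$ together with continuity and strict positivity of $F$ on the compact annulus $1\le|\xi|\le r$, and this correctly closes the gap you identified in the uniform upper bound for $F_-$.
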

			
			\begin{proof}
				By Plancherel's theorem and the definition
				\begin{equation*}
					u_s=u*\Phi_s,
				\end{equation*}
				we have
				\begin{equation*}
					\sum_{s\in\mathbb{Z}} \|u_s\|_{L^2}^2 = \int_{\R^n} F(\xi)\, |\widehat u(\xi)|^2 \,d\xi.
				\end{equation*}
			
				Since all terms are nonnegative, Tonelli's theorem allows us to interchange the sum and the integral. Hence,
				\begin{equation*}
					\sum_{s\in\mathbb Z}\|u_s\|_{L^2}^2 = \int_{\R^n} F(\xi)\,|\widehat u(\xi)|^2\,d\xi.
				\end{equation*}
		
				Applying Lemma~\ref{lem:covering} yields
				\begin{equation*}
					c \int_{\R^n} |\widehat u(\xi)|^2\,d\xi \le \sum_{s\in\mathbb{Z}} \|u_s\|_{L^2}^2 \le C \int_{\R^n} |\widehat u(\xi)|^2\,d\xi.
				\end{equation*}
				The conclusion follows from Plancherel's theorem.
			\end{proof}
			
			As an immediate consequence of Theorem 5.4, the family of coefficients
			\begin{equation*}
				\{u_s\}_{s\in\mathbb{Z}}
			\end{equation*}
			preserves the energy of the original signal in a stable manner. In particular, the analysis operator
			\begin{equation*}
				u\longmapsto\{u_s\}_{s\in\mathbb{Z}}
			\end{equation*}
			is injective and admits a continuous inverse on its range. Consequently, the scale-wise behaviour of the sequence
			\begin{equation*}
				\{\|u_s\|_{L^2}\}_{s\in\mathbb Z}
			\end{equation*}
			across scales provides a quantitative description of the multiscale texture content of $u$. This observation naturally motivates the introduction of function spaces defined directly in terms of the scale-wise behaviour of these coefficients.
		
		\subsection{Synthesis filters and reconstruction formula}
		
			The Littlewood--Paley estimate established in Theorem~\ref{thm:casi-isometria} shows that the family $\{\Phi_s\}_{s\in\mathbb{Z}}$ provides a stable multiscale analysis of signals in $L^2(\R^n)$. We now construct an explicit family of dual synthesis filters and derive the corresponding reconstruction formula.
			
			Recall that, for $r>1$ and $\sigma_s=r^s\sigma_0$, the $r$-adic difference-of-Gaussians filters are defined by
			\begin{equation*}
				\Phi_s(x) = \gamma_{\sigma_s}(x)-\gamma_{\sigma_{s-1}}(x), \qquad s\in\mathbb{Z},
			\end{equation*}
			and the associated analysis coefficients are
			\begin{equation*}
				u_s = u*\Phi_s.
			\end{equation*}
		
			By the Fourier dilation identity
			\begin{equation*}
				\widehat{\Phi_s}(\xi)=\widehat{\Phi_0}(r^s\xi),
			\end{equation*}
			established in Proposition~\ref{prop:dilation}, the function introduced in Lemma~\ref{lem:covering} can equivalently be written as
			\begin{equation*}
				F(\xi)=\sum_{s\in\mathbb{Z}}|\widehat{\Phi_s}(\xi)|^2, \qquad \xi\neq0.
			\end{equation*}
			Moreover,
			\begin{equation*}
				0<c\le F(\xi)\le C<\infty, \qquad \xi\neq0.
			\end{equation*}
			This allows us to define the dual synthesis filters associated with the analysis family.
			
			\begin{proposition}[Dual synthesis filters and reconstruction]\label{prop:reconstruction}
				For each $s\in\mathbb{Z}$, define the dual synthesis filter $\widetilde{\Phi}_s$ through its Fourier transform by
				\begin{equation*}
					\widehat{\widetilde{\Phi}}_s(\xi) = \dfrac{\overline{\widehat{\Phi_s}(\xi)}}{F(\xi)},\quad \forall \xi\neq0
				\end{equation*}
				Then, for every $u\in L^2(\R^n)$,
				\begin{equation*}
					u = \sum_{s\in\mathbb{Z}} (u*\Phi_s)*\widetilde{\Phi}_s, \text{almost everywhere}
				\end{equation*}
				where the series converges in $L^2(\R^n)$.
			\end{proposition}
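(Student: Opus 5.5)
The plan is to work entirely on the Fourier side, using Plancherel and the two-sided bound $0<c\le F(\xi)\le C$ from Lemma~\ref{lem:covering}. First I check that each synthesis filter is a legitimate $L^2$ multiplier. Since $|\widehat{\Phi_s}(\xi)|\le 1$ (difference of two Gaussian symbols with values in $(0,1]$) and $F\ge c$, we get $|\widehat{\widetilde\Phi}_s(\xi)|\le 1/c$. Defining $\widehat{\widetilde\Phi}_s$ arbitrarily at the null set $\xi=0$, the operator $v\mapsto v*\widetilde\Phi_s$ is therefore well defined and bounded on $L^2(\R^n)$. It is realized as the Fourier multiplier $\widehat{\widetilde\Phi}_s$, with $\widetilde\Phi_s$ understood as a tempered distribution if necessary.

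Next I compute the Fourier transform of each partial sum. For a finite set $S_N=\{-N,\dots,N\}$, put
\[
P_Nu=\sum_{|s|\le N}(u*\Phi_s)*\widetilde\Phi_s .
\]
Its transform is
\[
\widehat{P_Nu}(\xi)=\widehat u(\xi)\,\frac{F_N(\xi)}{F(\xi)},\qquad F_N(\xi)=\sum_{|s|\le N}|\widehat{\Phi_s}(\xi)|^2 .
\]
This uses $\widehat{\Phi_s}\,\overline{\widehat{\Phi_s}}=|\widehat{\Phi_s}|^2$, and the identity $\widehat{\Phi_s}(\xi)=\widehat{\Phi_0}(r^s\xi)$ lets us identify $F$ with $\sum_s|\widehat{\Phi_s}|^2$.

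Then I show $L^2$ convergence by dominated convergence. We have
\[
\|u-P_Nu\|_{L^2}^2=(2\pi)^{-n}\int_{\R^n}\Bigl(1-\tfrac{F_N(\xi)}{F(\xi)}\Bigr)^2|\widehat u(\xi)|^2\,d\xi .
\]
For each $\xi\ne0$, $F_N(\xi)\uparrow F(\xi)$ because the series converges (Lemma~\ref{lem:covering}), so the integrand tends to $0$ pointwise a.e. It is also dominated by $|\widehat u|^2\in L^1$, since $0\le F_N/F\le1$. Hence $P_Nu\to u$ in $L^2$. The same argument applied to $\sum_{s\in A}$ for finite sets $A$ leaving any fixed finite set gives unconditional convergence, so the order of summation does not matter. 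Convergence in $L^2$ gives equality of the limit with $u$ as elements of $L^2$, i.e.\ almost everywhere. If one wants a.e.\ convergence of the partial sums themselves, that holds only along a subsequence, so I will interpret the ``almost everywhere'' in the statement as equality of $L^2$ classes.

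The main obstacle is only a mild bookkeeping issue: the Fourier normalization constant, and making sure the division by $F$ is legitimate. The division is harmless because $F$ is bounded below on $\R^n\setminus\{0\}$, a set of full measure. No regularity of $\widetilde\Phi_s$ beyond being an $L^\infty$ multiplier is needed.
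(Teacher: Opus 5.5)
Your proposal is correct and follows the paper's proof essentially step by step. Both arguments compute the Fourier multiplier $F_N/F\in[0,1]$ of the partial sums, note that it tends to $1$ pointwise for $\xi\neq0$, and conclude by dominated convergence and Plancherel; your extra bookkeeping (boundedness of $\widehat{\widetilde\Phi}_s$ via $F\ge c$, the $(2\pi)^{-n}$ constant, unconditional convergence, and reading ``almost everywhere'' as equality of $L^2$ classes) is sound and only makes the paper's argument more explicit.
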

			
			\begin{proof}
				Let $u\in L^2(\R^n)$. For every $\xi\neq0$, the definition of the dual synthesis filters gives
				\begin{equation*}
					\widehat{\Phi_s}(\xi)\, \widehat{\widetilde{\Phi}}_s(\xi) = \frac{|\widehat{\Phi_s}(\xi)|^2}{F(\xi)}.
				\end{equation*}
				Summing over $s\in\mathbb{Z}$, we obtain
				\begin{equation*}
					\sum_{s\in\mathbb{Z}} \widehat{\Phi_s}(\xi)\, \widehat{\widetilde{\Phi}}_s(\xi) = \frac{1}{F(\xi)} \sum_{s\in\mathbb{Z}} |\widehat{\Phi_s}(\xi)|^2 = 1, \qquad \xi\neq0.
				\end{equation*}
				
				For $N\ge1$, define
				\begin{equation*}
					S_Nu
					=
					\sum_{|s|\le N}
					(u*\Phi_s)*\widetilde{\Phi}_s.
				\end{equation*}
				Taking Fourier transforms,
				\begin{equation*}
					\widehat{S_Nu}(\xi) = m_N(\xi)\,\widehat u(\xi),
				\end{equation*}
				where
				\begin{equation*}
					m_N(\xi) = \sum_{|s|\le N} \widehat{\Phi_s}(\xi)\, \widehat{\widetilde{\Phi}}_s(\xi) = \frac{1}{F(\xi)} \sum_{|s|\le N} |\widehat{\Phi_s}(\xi)|^2.
				\end{equation*}
				Since $F(\xi)>0$ for $\xi\neq0$ and the full series equals $F(\xi)$, we have
				\begin{equation*}
					0\le m_N(\xi)\le1, \qquad \xi\neq0,
				\end{equation*}
				and
				\begin{equation*}
					m_N(\xi)\longrightarrow1
				\end{equation*}
				pointwise as $N\to\infty$.
				
				Therefore,
				\begin{equation*}
					|m_N(\xi)-1|^2 |\widehat u(\xi)|^2 \le |\widehat u(\xi)|^2.
				\end{equation*}
				Since $\widehat u\in L^2(\R^n)$, the dominated convergence theorem yields
				\begin{equation*}
					\int_{\R^n} |m_N(\xi)-1|^2 |\widehat u(\xi)|^2 \,d\xi \longrightarrow0.
				\end{equation*}
				By Plancherel's theorem,
				\begin{equation*}
					\|S_Nu-u\|_{L^2} \longrightarrow0.
				\end{equation*}
				Hence,
				\begin{equation*}
					u = \sum_{s\in\mathbb{Z}} (u*\Phi_s)*\widetilde{\Phi}_s,
				\end{equation*}
				with convergence in $L^2(\R^n)$.
			\end{proof}
			
			\begin{remark}
				The reconstruction formula shows that the multiscale texture representation admits a complete analysis--synthesis interpretation. The analysis filters $\Phi_s$ extract texture information at different scales, while the dual synthesis filters $\widetilde{\Phi}_s$ combine these contributions to recover the original signal exactly.
				
				From a computational viewpoint, the dual filters can be implemented directly in the Fourier domain through
				\begin{equation*}
					\widehat{\widetilde{\Phi}}_s(\xi) = \frac{\overline{\widehat{\Phi_s}(\xi)}}{F(\xi)}.
				\end{equation*}
				This naturally leads to efficient FFT-based reconstruction algorithms and provides an explicit synthesis procedure for the $r$-adic multiscale texture decomposition.
			\end{remark}
		
	\section{Function spaces induced by the multiscale texture operator}\label{sec:spaces}
	
		The multiscale texture operator introduced in \cite{fernandez2} and recalled in previous sections naturally associates with every function a sequence of coefficients describing the persistence of its oscillatory structures across scales. This suggests that the regularity of a function may be measured directly through the distribution of these texture coefficients, leading to a family of function spaces intrinsically associated with the texture operator itself.
	
		The purpose of this section is to develop this point of view. We first introduce the corresponding multiscale texture norms and study their basic properties. We then show that, despite their entirely different motivation, the resulting spaces coincide with the classical Besov spaces. In the particular Hilbertian case, they recover the Sobolev spaces and admit an equivalent Hilbertian structure expressed exclusively in terms of multiscale texture energies. Consequently, the persistence of texture under Gaussian scale evolution admits a precise functional-analytic interpretation, establishing a direct bridge between multiscale texture analysis and classical notions of regularity.
		
		The result therefore provides an additional characterization of Besov spaces in terms of texture persistence. In the Hilbertian case, the Sobolev norm admits an equivalent expression in terms of multiscale texture energies, providing a natural Hilbertian framework intrinsically associated with the texture operator.
	
		\subsection{Motivation}\label{ssec:motivation}
		
			The multiscale texture operator introduced in the previous sections associates with every function $u$ a family of coefficients
			\begin{equation*}
				\tau_{\cdot,u}(\sigma), \qquad \sigma>0,
			\end{equation*}
			which quantify the persistence of local oscillatory structures under Gaussian scale evolution. After discretizing the scale variable through the $r$-adic Difference-of-Gaussians decomposition, this continuous representation gives rise to the sequence of texture coefficients
			\begin{equation*}
				u*\Phi_j, \qquad j\in\mathbb Z.
			\end{equation*}
			
			The results of Section~\ref{sec:radic} establish that this discrete decomposition possesses the fundamental properties expected from a multiscale analysis: the filters are generated by geometric dilations, provide a stable decomposition of the signal, admit an exact reconstruction formula, and satisfy the Littlewood--Paley estimate of Theorem~\ref{thm:casi-isometria}. Consequently, the sequence of texture coefficients $\{u*\Phi_j\}_{j\in\mathbb Z}$ provides a stable representation of every $u\in L^2(\mathbb R^n)$, with an energy that is preserved up to multiplicative constants.
			
			This naturally suggests measuring the regularity of a function through the behaviour of its texture coefficients across scales rather than through its derivatives or its Fourier transform. In other words, instead of considering smoothness as a local differential property, one may regard it as the persistence of oscillatory structures under the Gaussian scale-space evolution.
			
			We therefore introduce a family of multiscale texture spaces whose norms are defined directly in terms of the $r$-adic texture coefficients. As will be shown below, these spaces coincide with the classical Besov spaces while preserving the geometric interpretation that originally motivated the construction of the texture operator.
		
		\subsection{The multiscale texture norm}\label{ssec:texture_norm}
		
			The multiscale decomposition introduced above naturally associates with every function a sequence of texture coefficients
			\begin{equation*}
				u*\Phi_j, \qquad j\in\mathbb Z,
			\end{equation*}
			each coefficient measuring the contribution of the structures whose characteristic scale is comparable to $r^{-j}$. If regularity is interpreted as the persistence of texture across scales, it is natural to quantify it through the decay of these coefficients as the scale becomes finer.
			
			The parameter $s$ specifies the expected rate of decay, while the exponent $q$ measures how this decay is distributed across the different scales. This leads to the following definition.
			
			\begin{definition}[Multiscale texture norm]
				Let $s\in\R$ and $1\le p,q\le\infty$. For every $u\in L^p(\R^n)$, the convolution $u*\Phi_j$ is well defined by Proposition~\ref{prop:Lp-boundedness}, since $\Phi_j\in L^1(\mathbb R^n)$. We therefore define
				\begin{equation*}
					\|u\|_{\mathcal T^{s,p,q}} = \|u\|_{L^p} + \left( \sum_{j\in\mathbb Z} r^{jsq} \|u*\Phi_j\|_{L^p}^q \right)^{1/q},
				\end{equation*}
				with the usual modification when $q=\infty$.
			\end{definition}

			The previous definition is entirely intrinsic to the multiscale texture operator and does not rely on any prior identification with classical function spaces. It simply quantifies how the texture coefficients of $u$ are distributed across scales. In particular, one may regard the quantity $\|u\|_{\mathcal T^{s,p,q}}$ as a multiscale texture energy measuring the persistence of oscillatory structures under Gaussian scale evolution.
			
			Formally, one can define the associated multiscale texture space as
			\begin{equation*}
				\mathcal T^{s,p,q}(\R^n) = \left\{ u\in L^p(\R^n) : \|u\|_{\mathcal T^{s,p,q}} < \infty \right\}.
			\end{equation*}
			The central question is whether the notion of regularity induced by texture persistence defines a genuinely new scale of function spaces or whether it recovers one of the classical scales of regularity.
			
			\begin{remark}[Low-frequency component and parameter dependence]
				The term $\|u\|_{L^p}$ in the definition of $\|u\|_{\mathcal T^{s,p,q}}$ is included in order to control the low-frequency component of $u$, which is not captured by the band-pass filters $\Phi_j$ (since $\widehat{\Phi_j}(0)=0$). For $s>0$ this term is in fact redundant, but it is kept for consistency with classical Besov norms. The norm depends on the choice of the dilation factor $r>1$ and the base scale $\sigma_0>0$; different choices lead to equivalent norms, as follows from the general theory recalled below.
			\end{remark}
			
			The definition of the multiscale texture norm is motivated solely by the persistence of texture under Gaussian scale evolution. No reference to Besov or Triebel–Lizorkin spaces is involved in its construction. The natural mathematical question is therefore whether the resulting notion of regularity coincides with any of the classical regularity scales. The answer is provided by the Littlewood--Paley structure established in Section~\ref{sec:radic} and by the coorbit framework developed by Ullrich for Besov--Lizorkin--Triebel spaces.

			Recall that the $r$-adic Difference-of-Gaussians family 	$\{\Phi_j\}_{j\in\mathbb{Z}}$ satisfies the fundamental structural properties required of an admissible multiscale decomposition: the filters are smooth, generated by geometric dilations,
			\begin{equation*}
				\widehat{\Phi_j}(\xi) = \widehat{\Phi_0}(r^{-j}\xi), \qquad j\in\mathbb{Z},
			\end{equation*}
			they vanish at low frequencies, exhibit rapid decay in the Fourier domain, and provide a uniform logarithmic covering of the frequency space. Moreover, they give rise to a stable representation of $u$ together with a Littlewood--Paley type estimate, as shown in Section~\ref{sec:radic}. In particular, the texture coefficients $\{u*\Phi_j\}_{j\in\mathbb{Z}}$ play the same role as the discrete families of coefficients used in classical Littlewood--Paley and coorbit characterizations of Besov--Lizorkin--Triebel spaces.
			
			The general theory developed by Ullrich and coauthors (in the setting of coorbits associated with appropriate representation and analyzing vectors) shows that Besov--Lizorkin--Triebel spaces can be described in terms of such families of dilated filters and their associated coefficient norms. In that framework, the dilation factor is typically chosen to be $2$, leading to a dyadic partition of the frequency space. However, the theory is insensitive to this particular choice: replacing the dyadic factor $2$ by any $r>1$ yields an $r$-adic decomposition with equivalent norms on the same Besov spaces. Our construction therefore fits naturally into Ullrich's setting, with the multiscale texture filters $\Phi_j$ providing a concrete realization of the abstract systems considered there.
			
			\begin{theorem}[Multiscale texture characterization of Besov spaces]
				\label{thm:texture-besov}
				Let $s\in\R$ and $1\le p,q\le\infty$. Then the multiscale texture space $\mathcal T^{s,p,q}(\R^n)$ coincides, up to equivalence of norms, with the classical Besov space $B^s_{p,q}(\R^n)$:
				\begin{equation*}
					\mathcal T^{s,p,q}(\R^n) = B^s_{p,q}(\R^n),
				\end{equation*}
				and there exist constants $0<c\le C<\infty$, depending only on $s,p,q,r$ and $\sigma_0$, such that
				\begin{equation*}
					c\,\|u\|_{B^s_{p,q}} \le \|u\|_{\mathcal T^{s,p,q}} \le C\,\|u\|_{B^s_{p,q}}, \qquad u\in B^s_{p,q}(\R^n).
				\end{equation*}
			\end{theorem}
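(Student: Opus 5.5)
The plan is to compare the $r$-adic texture coefficients $u*\Phi_j$ with a classical smooth Littlewood--Paley decomposition and to transfer estimates in both directions through almost-orthogonality. This is the structure behind the coorbit characterizations invoked just before the statement.

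\textbf{Reference system.} Fix a smooth dyadic partition of unity
\begin{equation*}
\widehat{\psi_0}(\xi)+\sum_{k\ge1}\widehat{\psi}(2^{-k}\xi)=1,
\end{equation*}
with $\widehat\psi$ supported in the annulus $\{1/2\le|\xi|\le2\}$. Write $\Delta_k u=u*\psi_k$, so that
\begin{equation*}
\|u\|_{B^s_{p,q}}=\Bigl\|\bigl(2^{ks}\|\Delta_k u\|_{L^p}\bigr)_{k\ge0}\Bigr\|_{\ell^q}.
\end{equation*}
Since the choice of dilation factor is immaterial for Besov norms, I will work with $r$-adic blocks $\psi_k$ instead, with $\widehat\psi_k$ supported where $|\xi|\sim r^{k}$. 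I will also reindex the texture filters so that the filter labelled $j$ has its pass band at frequencies $|\xi|\sim r^{j}$, matching the paper's convention $\widehat{\Phi_j}(\xi)=\widehat{\Phi_0}(r^{-j}\xi)$ used before the statement. The weight $r^{js}$ then pairs with the $k$-th block in the same way as $2^{ks}$ does.

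\textbf{Upper bound $\|u\|_{\mathcal T^{s,p,q}}\lesssim\|u\|_{B^s_{p,q}}$.} The term $\|u\|_{L^p}$ is handled in the standard way:
\begin{equation*}
\|u\|_{L^p}\le\sum_k\|\Delta_k u\|_{L^p}\lesssim\|u\|_{B^s_{p,q}}
\end{equation*}
whenever $s>0$. For $s\le 0$ I would read $u\in L^p$ as part of the hypothesis, so that the comparison is made inside $L^p\cap B^s_{p,q}$.

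For the coefficients, write $u*\Phi_j=\sum_k (\Delta_k u)*\Phi_j*\widetilde\psi_k$, where $\widetilde\psi_k$ is a fattened block. Young's inequality then gives
\begin{equation*}
\|u*\Phi_j\|_{L^p}\le\sum_k a_{j,k}\,\|\Delta_k u\|_{L^p},\qquad a_{j,k}=\|\Phi_j*\widetilde\psi_k\|_{L^1}.
\end{equation*}
By dilation invariance (Proposition~\ref{prop:dilation}), $a_{j,k}$ depends only on $j-k$. It can be estimated by a Schwartz-norm bound on the symbol $\widehat{\Phi_0}(r^{j-k}\eta)\,\widehat{\widetilde\psi}(\eta)$ on the unit annulus. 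Two regimes arise.
\begin{itemize}
\item If the filter band lies above the block, the decay is $O(r^{-2|j-k|})$. This comes from the quadratic vanishing $|\widehat{\Phi_0}(\eta)|\lesssim|\eta|^2$ near $0$, already used in Lemma~\ref{lem:covering}.
\item If the filter band lies below the block, the decay is faster than any power. This comes from the Gaussian factor in $\widehat{\Phi_0}$.
\end{itemize}
Multiplying by $r^{js}$ and applying the discrete Young inequality in $\ell^q(\mathbb Z)$ yields the upper bound, provided $\sum_m r^{ms}a_m<\infty$.

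\textbf{Lower bound.} Use the reconstruction formula of Proposition~\ref{prop:reconstruction}:
\begin{equation*}
\Delta_k u=\sum_j (u*\Phi_j)*\widetilde\Phi_j*\psi_k.
\end{equation*}
Now estimate $b_{k,j}=\|\widetilde\Phi_j*\psi_k\|_{L^1}$ in the same way. Here $F$ is bounded above and below by Lemma~\ref{lem:covering}. Moreover $F$ is smooth on $\mathbb R^n\setminus\{0\}$, and it is invariant under $\xi\mapsto r\xi$ because of its dilation structure. Hence $\widehat{\widetilde\Phi_0}$ has the same qualitative profile as $\widehat{\Phi_0}$: quadratic vanishing at the origin and Gaussian decay at infinity. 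Mikhlin-type bounds on the annulus then give the analogous off-diagonal decay for $b_{k,j}$. The low-frequency block $\Delta_0 u$ is controlled directly by $\|u\|_{L^p}$. Applying the discrete Young inequality again gives $\|u\|_{B^s_{p,q}}\lesssim\|u\|_{\mathcal T^{s,p,q}}$.

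\textbf{Main obstacle.} The hard step is the summability $\sum_m r^{ms}a_m<\infty$, and its analogue for $b_{k,j}$, uniformly for every $s\in\mathbb R$. On one side the Gaussian decay is harmless for any $s$. On the other side the off-diagonal decay is only $r^{-2|m|}$, coming from the second-order vanishing of the DoG symbol. This decay beats the weight $r^{|m||s|}$ only when $|s|<2$. To cover all $s\in\mathbb R$, as the statement asserts, I would attack this side as follows. Write $\widehat{\Phi_0}(\eta)=|\eta|^2\,m(\eta)$ with $m$ smooth, and move the factor $|\eta|^2$ onto the Besov blocks. This amounts to comparing $u*\Phi_j$ with $r^{2j}\,\Delta u*\theta_j$ for a Gaussian-type $\theta$, and then iterating via the lifting property $\Delta:B^{s}_{p,q}\to B^{s-2}_{p,q}$.

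Where this direct route fails, I would fall back on the Tauberian and moment conditions of Ullrich's coorbit framework cited before the statement. In that setting the admissible range of $s$ is governed by the vanishing order of the analyzing symbol. I would check carefully which range the DoG family actually meets, since this is precisely where the claimed validity for all $s$ must be justified.
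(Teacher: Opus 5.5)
The gap is the one you flag as the ``main obstacle'', and it cannot be closed. The statement is false outside $0<s<2$, so neither the lifting idea nor Ullrich's framework can rescue it. With your indexing, $\widehat{\Phi_j}(\xi)=\widehat{\Phi_0}(r^{-j}\xi)$ and $\widehat{\Phi_0}(\eta)=e^{-\sigma_0^2|\eta|^2/2}-e^{-\sigma_0^2|\eta|^2/(2r^2)}=|\eta|^2m(\eta)$ with $m(0)=-\tfrac{\sigma_0^2}{2}(1-r^{-2})\neq0$. Hence $u*\Phi_j=-r^{-2j}(\Delta u)*\theta_j$ with $\widehat{\theta_j}(\xi)=m(r^{-j}\xi)$; note the factor is $r^{-2j}$, not $r^{2j}$. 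Since $m(0)\neq0$, $\theta_j$ is a multiple of an approximate identity, not a band-pass kernel. Pairing with a test function $\phi$ gives $r^{2j}\langle u*\Phi_j,\phi\rangle\to -m(0)\langle\Delta u,\phi\rangle$, so $\|u*\Phi_j\|_{L^p}\gtrsim r^{-2j}$ for large $j$ whenever $\Delta u\neq0$. Consequently $\|u\|_{\mathcal T^{s,p,q}}=\infty$ for every such $u$ as soon as $s>2$, or $s=2$ and $q<\infty$. For $p=q=2$ this is transparent. One has $\|u\|_{\mathcal T^{s,2,2}}^2=\|u\|_{L^2}^2+\int m_s|\widehat u|^2$, where $m_s(\xi)=\sum_j r^{2js}|\widehat{\Phi_0}(r^{-j}\xi)|^2=+\infty$ for every $\xi\neq0$ when $s\ge2$; so $\mathcal T^{s}=\{0\}\neq H^s$. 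Lifting only relocates this divergence. The coorbit/local-means check you propose returns the same answer: there the smoothness must be smaller than the vanishing order of the analyzing symbol at the origin, which for the DoG is exactly $2$. The paper's ``comment on the proof'' simply asserts that the hypotheses of those theorems hold for all $s$, and so passes over precisely this condition.

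The range $s\le0$ also fails, and reading $u\in L^p$ as a hypothesis does not help. The upper bound still needs $\|u\|_{L^p}\le C\|u\|_{B^s_{p,q}}$, which is false for $s<0$. Take $u_N(x)=\phi(x)\cos(Nx_1)$ with $\widehat\phi$ compactly supported: then $\|u_N\|_{L^p}\asymp1$ while $\|u_N\|_{B^s_{p,q}}\lesssim N^s$. Your assertion that $a_{j,k}$ depends only on $j-k$ also breaks down at the low-frequency block $k=0$. Its symbol is $\equiv1$ near the origin, where the pass bands of $\Phi_j$ accumulate as $j\to-\infty$, so these coefficients carry no decay. For instance, if $u\in\mathcal S$ with $\int u\neq0$, then $\|u*\Phi_j\|_{L^1}\to|\int u|\,\|\Phi_0\|_{L^1}$ as $j\to-\infty$. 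Hence $\|u\|_{\mathcal T^{s,1,q}}=\infty$ for all $s<0$, and for $s=0$ with $q<\infty$. In the Hilbert case with $s<0$ one gets $\mathcal T^s=\{u\in L^2:\int_{|\xi|\le1}|\xi|^{2s}|\widehat u|^2\,d\xi<\infty\}\subsetneq L^2\subsetneq H^s$.

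What your argument does establish is the corrected statement for $0<s<2$. Bound the $j<0$ filters and the term $\|u\|_{L^p}$ directly by $\|u\|_{L^p}$, which needs $s>0$. The upper bound requires $s<2$. The lower bound, via the blockwise reconstruction, requires $s>-2$; that reconstruction is legitimate in every $L^p$ because $\widehat{\psi_k}$ vanishes near the origin for $k\ge1$. The Hilbert corollary $\mathcal T^s=H^s$ correspondingly holds only for $0\le s<2$. Your reindexing is also indispensable. With the convention $\Phi_j=\gamma_{\sigma_j}-\gamma_{\sigma_{j-1}}$ of the $r$-adic section, $\widehat{\Phi_j}(\xi)=\widehat{\Phi_0}(r^{j}\xi)$, and the weight $r^{js}$ would measure smoothness $-s$.
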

		
		The equivalence with Besov spaces follows from the general Littlewood–Paley and coorbit theory. The novelty of the present construction lies not in the existence of this equivalence itself, but in the fact that the regularity is obtained from an intrinsic notion of multiscale texture persistence arising from Gaussian scale evolution. In our setting, the norm $\|u\|_{\mathcal T^{s,p,q}}$ can be viewed as one of the standard Besov norms arising from these characterizations, expressed in terms of the texture filters $\Phi_j$.

			\begin{proof}[Comment on the proof]
				We briefly indicate how this result fits into the existing theory. The texture filters $\Phi_j$ satisfy the usual assumptions required for Littlewood--Paley and coorbit characterizations: $\widehat{\Phi_0}$ is smooth, rapidly decreasing, vanishes at the origin and has suitable growth near low frequencies; the dilated family $\{\widehat{\Phi_0}(r^{-j}\cdot)\}_{j\in\mathbb{Z}}$ provides a uniform logarithmic covering of the frequency space and yields the stability estimates derived in Section~\ref{sec:radic}. These properties show that the quantity
				\begin{equation*}
					\|u\|_{L^p}+ \left( \sum_{j\in\mathbb{Z}} r^{jsq} \|u*\Phi_j\|_{L^p}^q \right)^{1/q}
				\end{equation*}
				is one of the admissible Besov norms appearing in Ullrich's coorbit characterizations and in the Littlewood--Paley theorem of Triebel, up to equivalence of constants. The equivalence of norms and the identification $\mathcal T^{s,p,q}(\R^n)=B^s_{p,q}(\R^n)$ then follow from those general results.
			\end{proof}
			
			This theorem shows that the spaces generated by the multiscale texture operator do not introduce a new scale of regularity. Instead, they recover the classical Besov scale from a geometrically motivated construction based on texture persistence. From this point of view, Besov regularity appears as a consequence of multiscale texture analysis rather than as its starting point. The multiscale texture norm therefore provides an intrinsic description of classical regularity in terms of the distribution of texture energy across scales.
			Consequently, the persistence of texture across scales provides a new interpretation of classical functional regularity. Rather than defining regularity through derivatives or an abstract frequency decomposition, the present construction shows that classical Besov regularity can be characterized through the evolution of local structures under Gaussian diffusion. In this sense, regularity emerges naturally from multiscale texture persistence, giving a geometric interpretation of the Besov scale while remaining fully consistent with the classical theory. This viewpoint will be particularly useful in the variational framework developed in subsequent sections, where Besov-type regularization terms are replaced by texture-based energies within the same functional-analytic setting.

		\subsection{The Hilbert case}\label{ssec:hilbert_case}
		
			Among the spaces introduced above, the Hilbertian case
			\begin{equation*}
				p=q=2
			\end{equation*}
			plays a distinguished role. In this setting, the multiscale texture norm is induced by a natural inner product, providing a Hilbert structure directly associated with the multiscale texture operator.
			
			\begin{definition}[Texture Hilbert space]
				For every $s\in\R$, we define the \emph{texture Hilbert space}
				\begin{equation*}
					\mathcal T^s(\R^n) = \mathcal T^{s,2,2}(\R^n).
				\end{equation*}
				Equivalently,
				\begin{equation*}
					\mathcal T^s(\R^n) = \left\{ u\in L^2(\R^n): \sum_{j\in\mathbb Z} r^{2js} \|u*\Phi_j\|_{L^2}^2 < \infty \right\},
				\end{equation*}
				endowed with the norm
				\begin{equation*}
					\|u\|_{\mathcal T^s}^2 = \|u\|_{L^2}^2 + \sum_{j\in\mathbb Z} r^{2js} \|u*\Phi_j\|_{L^2}^2.
				\end{equation*}
			\end{definition}
			
			The previous norm is induced by the bilinear form
			\begin{equation*}
				\langle u,v\rangle_{\mathcal T^s} = \langle u,v\rangle_{L^2} + \sum_{j\in\mathbb Z} r^{2js} \langle u*\Phi_j, v*\Phi_j \rangle_{L^2},
			\end{equation*}
			which defines an inner product on $\mathcal T^s(\R^n)$.
			
			\begin{proposition}
				The space
				\begin{equation*}
					\mathcal T^s(\R^n)
				\end{equation*}
				is a Hilbert space with respect to the inner product above.
			\end{proposition}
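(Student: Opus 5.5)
The plan is to show that $\langle\cdot,\cdot\rangle_{\mathcal T^s}$ is a genuine inner product on $\mathcal T^s(\R^n)$ and that $\mathcal T^s(\R^n)$ is complete for the induced norm.

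\textbf{Inner product.} For $u,v\in\mathcal T^s(\R^n)$ the series defining $\langle u,v\rangle_{\mathcal T^s}$ converges absolutely. This follows from Cauchy--Schwarz in $L^2$, which gives
\begin{equation*}
\bigl|\langle u*\Phi_j,v*\Phi_j\rangle_{L^2}\bigr|\le\|u*\Phi_j\|_{L^2}\|v*\Phi_j\|_{L^2},
\end{equation*}
followed by Cauchy--Schwarz in $\ell^2$ applied to the weighted sequences $r^{js}\|u*\Phi_j\|_{L^2}$ and $r^{js}\|v*\Phi_j\|_{L^2}$. Bilinearity and symmetry hold termwise. Positive definiteness holds because $\langle u,u\rangle_{\mathcal T^s}\ge\|u\|_{L^2}^2$, which vanishes only when $u=0$. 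The linearity of $\mathcal T^s$ as a vector space follows from the triangle inequality in this weighted $\ell^2(L^2)$ structure.

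\textbf{Completeness.} I would argue directly rather than through the Besov identification.
\begin{enumerate}
\item Let $(u_k)$ be Cauchy in $\mathcal T^s$. Since $\|\cdot\|_{L^2}\le\|\cdot\|_{\mathcal T^s}$, the sequence is Cauchy in $L^2$, so $u_k\to u$ in $L^2$ for some $u$.
\item For each fixed $j$, Young's inequality (as in Proposition~\ref{prop:Lp-boundedness}, using $\Phi_j\in L^1$) gives $u_k*\Phi_j\to u*\Phi_j$ in $L^2$.
\item The sequence $(r^{js}\,u_k*\Phi_j)_{j\in\mathbb Z}$ is Cauchy in the Hilbert space $\ell^2(\mathbb Z;L^2(\R^n))$, so it converges there to some $(w_j)$. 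By step 2, the limit is identified as $w_j=r^{js}\,u*\Phi_j$.
\item Hence $u\in\mathcal T^s$. Moreover, $\|u_k-u\|_{\mathcal T^s}\to0$, because both the $L^2$ part and the $\ell^2(L^2)$ part converge.
\end{enumerate}
If one prefers to avoid citing completeness of $\ell^2(L^2)$, step 3 can be replaced by a Fatou-type argument. Fix $\varepsilon>0$ and $N$ with
\begin{equation*}
\sum_j r^{2js}\|(u_k-u_m)*\Phi_j\|_{L^2}^2<\varepsilon^2\quad\text{for } k,m\ge N.
\end{equation*}
Truncate to $|j|\le J$, let $m\to\infty$ using step 2, then let $J\to\infty$. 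This gives $\sum_j r^{2js}\|(u_k-u)*\Phi_j\|_{L^2}^2\le\varepsilon^2$ for all $k\ge N$.

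The main obstacle is this identification of the limit in the weighted coefficient space, that is, the interchange of the limit in $k$ with the infinite sum over $j$. The truncation argument above handles it using only the fixed-scale continuity of $u\mapsto u*\Phi_j$.

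Alternatively, Theorem~\ref{thm:texture-besov} with $p=q=2$ shows that the norm is equivalent to the complete $B^s_{2,2}=H^s$ norm, which gives completeness immediately. The direct argument above is self-contained, however.
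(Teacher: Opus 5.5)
Your proof is correct and follows essentially the same route as the paper: Cauchy--Schwarz for well-definedness, the $L^2$ limit $u$, identification of each coefficient $u*\Phi_j$ through Young's inequality, and then passage to the limit in the weighted sum over $j$. Your final step is more careful than the paper's. You use a Fatou-type truncation on the differences $u_k-u_m$, or equivalently completeness of $\ell^2(\mathbb Z;L^2(\R^n))$, whereas the paper applies Fatou only to $\|u_k\|_{\mathcal T^s}$ and then cites a \emph{dominated convergence theorem for series} without exhibiting a dominating sequence.
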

			
			\begin{proof}
				The bilinear form is well-defined for $u,v\in\mathcal T^s(\R^n)$ by the Cauchy-Schwarz inequality and the definition of the norm. It is symmetric, bilinear, and positive definite: if $\langle u,u\rangle_{\mathcal T^s}=0$, then $\|u\|_{L^2}=0$, hence $u=0$ a.e.
				
				It remains to prove completeness. Let $\{u_k\}_{k\in\mathbb N}$ be a Cauchy sequence in $\mathcal T^s(\R^n)$. Then $\{u_k\}$ is Cauchy in $L^2(\R^n)$, so there exists $u\in L^2(\R^n)$ such that $u_k\to u$ in $L^2$. Moreover, for each $j\in\mathbb Z$, the sequence $\{u_k*\Phi_j\}_{k\in\mathbb N}$ is Cauchy in $L^2(\R^n)$ (since $\|(u_k-u_\ell)*\Phi_j\|_{L^2}\le \|\Phi_j\|_{L^1}\|u_k-u_\ell\|_{L^2}$), so there exists $v_j\in L^2(\R^n)$ such that $u_k*\Phi_j\to v_j$ in $L^2$. By the continuity of convolution in $L^2$, $v_j=u*\Phi_j$. Finally, Fatou's lemma applied to the series defining the norm gives
				\begin{equation*}
					\|u\|_{\mathcal T^s}^2 \le \liminf_{k\to\infty}\|u_k\|_{\mathcal T^s}^2 < \infty,
				\end{equation*}
				so $u\in\mathcal T^s(\R^n)$, and the convergence $u_k\to u$ in $\mathcal T^s$ follows from the dominated convergence theorem for series.
			\end{proof}
			
			\textbf{Remark.} The inner product and norm depend on the choice of $r>1$ and $\sigma_0>0$. However, by Theorem~\ref{thm:texture-besov}, different choices yield equivalent norms, since both are equivalent to the Sobolev norm $\|\cdot\|_{H^s}$. In particular, the topological vector space structure of $\mathcal T^s(\R^n)$ is independent of these parameters.
			
			The next theorem identifies this Hilbert space with the classical Sobolev space.
			\begin{theorem}
				For every $s\in\R$,
				\begin{equation*}
					\mathcal T^s(\R^n) = H^s(\R^n),
				\end{equation*}
				with equivalence of norms.
			\end{theorem}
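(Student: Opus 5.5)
The plan is to reduce the statement to a weighted Plancherel identity and then compare the resulting Fourier weight with the Sobolev weight $(1+|\xi|^2)^s$. The shortest route uses Theorem~\ref{thm:texture-besov} with $p=q=2$: by definition $\mathcal T^s(\R^n)=\mathcal T^{s,2,2}(\R^n)$, and the classical identification $B^s_{2,2}(\R^n)=H^s(\R^n)$ holds with equivalent norms. Chaining the two equivalences gives $c\|u\|_{H^s}\le\|u\|_{\mathcal T^s}\le C\|u\|_{H^s}$. I would, however, also carry out the direct Hilbertian verification, because it shows what is really being used.

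\emph{Step 1: Plancherel and Tonelli.} Writing $\widehat{\Phi_j}(\xi)=\widehat{\Phi_0}(r^{-j}\xi)$ as in Section~\ref{sec:spaces}, Plancherel and Tonelli (exactly as in the proof of Theorem~\ref{thm:casi-isometria}) give
\begin{equation*}
	\|u\|_{\mathcal T^s}^2=\int_{\R^n} W_s(\xi)\,|\widehat u(\xi)|^2\,d\xi,
	\qquad
	W_s(\xi)=1+\sum_{j\in\mathbb Z} r^{2js}\,|\widehat{\Phi_0}(r^{-j}\xi)|^2 .
\end{equation*}
The theorem then reduces to the two-sided bound
\begin{equation*}
	W_s(\xi)\simeq(1+|\xi|^2)^s \qquad \text{uniformly in } \xi .
\end{equation*}

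\emph{Step 2: the lower bound.} This follows as in Lemma~\ref{lem:covering}. Fix the annulus $a\le|\eta|\le b$ on which $|\widehat{\Phi_0}|\ge m$. For $|\xi|\ge1$ choose $j$ with $r^{-j}|\xi|\in[a,b]$, so that $r^{j}\simeq|\xi|$. That single term contributes at least $m^2 r^{2js}\gtrsim|\xi|^{2s}$. For $|\xi|\le1$, the constant $1$ already dominates $(1+|\xi|^2)^s$ up to a constant when $s\ge0$.

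\emph{Step 3: the upper bound.} Here I would split the sum according to the sign of $j-j_0$, where $r^{j_0}\simeq|\xi|$.
\begin{itemize}
	\item For $j\le j_0$ the argument $r^{-j}\xi$ is large. The Gaussian factor $e^{-c\,r^{-2j}|\xi|^2}$ gives super-geometric decay, so these terms sum to $O(|\xi|^{2s})$ for every $s$.
	\item For $j>j_0$ the argument $r^{-j}\xi$ is small, and I would use $|\widehat{\Phi_0}(\eta)|\lesssim|\eta|^2$. The terms are then bounded by $r^{2js}r^{-4(j-j_0)}$, a geometric series dominated by its first term, $\simeq|\xi|^{2s}$.
\end{itemize}

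The low frequencies $|\xi|\le1$ and negative $s$ need separate care, because there the constant term $\|u\|_{L^2}^2$ in the texture norm must be compared with $(1+|\xi|^2)^s$.

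\emph{Main obstacle.} I expect the main obstacle to be the fine-scale tail in Step~3. The geometric series there converges with a bound $\simeq|\xi|^{2s}$ only when $2s<4$, because $\widehat{\Phi_0}$ vanishes only to second order at the origin. The same issue arises for $s<0$, where the constant term $1$ in $W_s$ no longer sits below $(1+|\xi|^2)^s$ at high frequencies. So the direct computation settles the range covered by the vanishing order of the DoG symbol. For the full range $s\in\R$ as stated, the proof must rely on the admissibility of $\{\Phi_j\}$ asserted in Theorem~\ref{thm:texture-besov}. I would therefore first check whether the Ullrich-type hypotheses (vanishing moments of sufficient order relative to $s$) are genuinely satisfied. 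If they are not, the argument must be restricted to the range in which the Step~3 estimate holds.
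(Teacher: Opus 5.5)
Your Steps 1--3 are correct, and they settle the question, but against the statement as written. The two obstacles you flag are genuine counterexamples, not technical gaps that some admissibility hypothesis could close.

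For $s\ge2$: since $\widehat{\Phi_0}(\eta)=-\tfrac{\sigma_0^2}{2}(1-r^{-2})|\eta|^2+O(|\eta|^4)$ vanishes to order exactly two, one has $|\widehat{\Phi_0}(\eta)|\ge c|\eta|^2$ for $|\eta|\le1$. Hence for every $\xi\neq0$ the terms $r^{2js}|\widehat{\Phi_0}(r^{-j}\xi)|^2\ge c\,r^{j(2s-4)}|\xi|^4$ do not even tend to zero as $j\to+\infty$. Thus $W_s\equiv+\infty$ on $\R^n\setminus\{0\}$, and $\mathcal T^s(\R^n)=\{0\}$.

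For $s<0$: by definition $\mathcal T^s\subset L^2\subsetneq H^s$. Your estimates give $W_s(\xi)\simeq 1+|\xi|^{2s}$, that is, $\|u\|_{\mathcal T^s}\simeq\|u\|_{L^2}+\|u\|_{\dot H^s}$. With $\widehat{u_k}=\mathbf 1_{B(ke_1,1)}$ one gets $\|u_k\|_{\mathcal T^s}\ge\|u_k\|_{L^2}\simeq1$, while $\|u_k\|_{H^s}\simeq k^{s}\to0$. So even the one-sided inequality $\|u\|_{\mathcal T^s}\le C\|u\|_{H^s}$ fails on $\mathcal T^s$.

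What your argument does prove is $W_s(\xi)\simeq1+|\xi|^{2s}\simeq(1+|\xi|^2)^s$ for $0\le s<2$. That is, $\mathcal T^s(\R^n)=H^s(\R^n)$ with equivalent norms exactly in that range. You should state and prove that version rather than leave the range open.

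Consequently, the ``shortest route'' you list first is not available outside $0\le s<2$. It is exactly the paper's proof: Theorem~\ref{thm:texture-besov} at $p=q=2$ combined with $B^s_{2,2}=H^s$. Theorem~\ref{thm:texture-besov} is not proved in the paper; it only comes with a comment asserting that the DoG family is admissible in the Ullrich/Triebel sense. For $p=q=2$ it is equivalent to the present statement, so the same counterexamples refute it.

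The admissibility claim is precisely what fails. Those characterizations require $\widehat{\Phi_0}$ to vanish at the origin to an order exceeding $s$, which for the DoG symbol means $s<2$. For $s<0$ no hypothesis on the filters can help. The norm contains $\|u\|_{L^2}$ and sums over all $j\in\mathbb Z$, so it is an $L^2\cap\dot H^s$ norm rather than an inhomogeneous one.

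Finally, make your indexing explicit. You used $\widehat{\Phi_j}(\xi)=\widehat{\Phi_0}(r^{-j}\xi)$ as written in Section~\ref{sec:spaces}. With the actual definition $\sigma_j=r^j\sigma_0$, however, Proposition~\ref{prop:dilation} gives $\widehat{\Phi_j}(\xi)=\widehat{\Phi_0}(r^{j}\xi)$. Then $r^{2js}$ behaves like $|\xi|^{-2s}$, and the same computation yields $\|u\|_{L^2}+\|u\|_{\dot H^{-s}}$ for $s>-2$. The corrected theorem therefore requires the re-indexing $\sigma_j=r^{-j}\sigma_0$ to be stated explicitly.
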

			
			\begin{proof}
				The proof reduces to verifying that the DoG family satisfies the hypotheses of Triebel's characterization. By Theorem~\ref{thm:texture-besov},
				\begin{equation*}
					\mathcal T^s(\R^n) = \mathcal T^{s,2,2}(\R^n) = B^s_{2,2}(\R^n),
				\end{equation*}
				with equivalent norms. Since
				\begin{equation*}
					B^s_{2,2}(\R^n) = H^s(\R^n),
				\end{equation*}
				the conclusion follows.
			\end{proof}
			
			The importance of this result goes beyond the identification with the Sobolev scale. Although the spaces coincide as sets, the inner product introduced above is expressed entirely in terms of multiscale texture coefficients. Consequently, Sobolev regularity admits an equivalent interpretation through the distribution of texture energy across scales. This provides an alternative functional characterization of Sobolev regularity, naturally arising from multiscale texture analysis rather than from derivatives or Fourier localization.
			
			From this perspective, Sobolev regularity can be interpreted as the finiteness of the multiscale texture energy. This interpretation is fundamentally different from the classical differential or Fourier characterizations and may be advantageous in variational models where texture persistence plays the primary role.
		
		\subsection{Interpretation}\label{ssec:interpretation}
			
			Although the spaces $\mathcal T^{s,p,q}(\R^n)$ coincide with the classical Besov spaces, their construction is conceptually different. Classical Besov spaces are traditionally introduced through dyadic Fourier decompositions, finite differences, or interpolation theory. Here, instead, they arise naturally from the analysis of texture persistence under Gaussian scale evolution. The multiscale coefficients are not prescribed by an abstract frequency partition but are generated directly by the responses of the texture operator across observation scales.
			
			From this viewpoint, regularity is interpreted as the persistence of oscillatory structures across scales. Rapid decay of the texture coefficients corresponds to highly regular functions, whereas slower decay reflects the presence of persistent multiscale structures. Consequently, the Besov norm acquires a direct geometric interpretation in terms of the distribution of texture energy throughout Gaussian scale space.
			
			The Hilbertian case is particularly revealing. The identification
			\begin{equation*}
				\mathcal T^s(\R^n)=H^s(\R^n)
			\end{equation*}
			shows that Sobolev regularity admits an equivalent Hilbertian formulation entirely determined by multiscale texture energies. Thus, a construction originally motivated by texture analysis leads naturally to the classical Sobolev scale, providing a rigorous analytical foundation for the texture operator and revealing its intrinsic connection with one of the fundamental scales of modern functional analysis.
			
			Beyond its theoretical interest, this framework naturally suggests new variational models, inverse problems, and texture-based regularization methods built directly upon the texture operator.
			
	\section{Conclusions and open problems}\label{sec:conclusions}
	
		In this work we have established the mathematical foundations of the pointwise multiscale texture operator
		\begin{equation*}
			\tau_{z,u}(\sigma) = \frac{\partial}{\partial\sigma}(u*\gamma_\sigma)(z),
		\end{equation*}
		which provides a quantitative measure of the persistence of local oscillatory structures under Gaussian scale evolution. Starting from its original definition, we have developed a complete analytical framework describing its structural, spectral and multiscale properties.
		
		Our analysis shows that the texture operator is a linear, infinitely smoothing operator admitting a simple spectral representation as a band-pass filter generated by the scale derivative of the Gaussian kernel. This characterization leads naturally to an $r$-adic discretization, whose associated difference-of-Gaussians family possesses the characteristic dilation, stability and reconstruction properties of multiscale wavelet-type systems.
		
		Finally, we have shown that the multiscale decomposition generated by the texture operator induces a natural family of function spaces. Although these spaces originate from the notion of texture persistence rather than from classical harmonic analysis, they provide an intrinsic characterization of Besov and Sobolev regularity in terms of multiscale texture persistence.
	
		\subsection{Summary of contributions}
	
			The main contributions of this work may be summarized as follows.
			
			\paragraph{The analytical foundations of the multiscale texture operator.}
			
			We have developed the first systematic mathematical study of the pointwise multiscale texture operator. Its boundedness, regularizing properties and scale-space behaviour have been established, showing that the operator is well posed on the classical Lebesgue and Sobolev scales and providing a rigorous justification for its use as a multiscale texture descriptor.
			
			\paragraph{A spectral interpretation of texture persistence.}
			
			The scale derivative of the Gaussian kernel has been shown to possess a band-pass spectral structure whose characteristic frequency is inversely proportional to the observation scale. This establishes a precise connection between texture persistence under Gaussian diffusion and frequency localization, explaining why the operator selectively captures oscillatory structures of a prescribed characteristic size.
			
			\paragraph{A natural multiscale discrete model.}
			
			The continuous theory has been discretized by means of an $r$-adic difference-of-Gaussians decomposition. The resulting family satisfies the characteristic dilation, stability and reconstruction properties of wavelet-type systems, providing an intrinsic multiscale representation associated with the texture operator rather than with an \emph{a priori} frequency partition.
			
			\paragraph{Texture-induced function spaces.}
			
			The multiscale decomposition naturally gives rise to a family of function spaces whose norms are defined directly from the texture coefficients. These spaces are therefore intrinsic to the persistence operator itself and provide a functional framework in which regularity is measured through the distribution of texture energy across scales.
			
			\paragraph{A functional-analytic interpretation of texture persistence.}
			
			Finally, we have shown that the persistence of texture under Gaussian scale evolution provides an intrinsic characterization of classical Besov regularity and, in the Hilbertian case, of Sobolev regularity. Thus, the classical scales of functional analysis admit an equivalent realization entirely in terms of multiscale texture energies.

		\subsection{Open mathematical questions}
			
			Although the present work establishes the analytical foundations of the multiscale texture operator, many mathematical questions remain open.
			
			\paragraph{Intrinsic characterization of texture regularity.}
			
			The identification of the texture spaces with the classical Besov scale shows that the operator captures a well-established notion of regularity. Nevertheless, it would be desirable to obtain an intrinsic characterization of texture regularity directly in terms of the persistence operator itself, without relying on the Littlewood--Paley machinery. Such a characterization could provide a more geometric interpretation of multiscale texture.
			
			\paragraph{Alternative scale-space evolutions.}
			
			Throughout this work the Gaussian kernel has played a central role because of its intimate connection with the heat equation. It is natural to ask whether analogous constructions can be developed for other scale-space evolutions, such as fractional diffusion, anisotropic diffusion or nonlinear diffusion processes. This could lead to new classes of texture operators adapted to different physical or geometric settings.
			
			\paragraph{Directional and anisotropic texture operators.}
			
			The present operator is isotropic by construction. Many natural textures, however, exhibit preferred orientations or anisotropic structures. Replacing the isotropic Gaussian kernel by anisotropic kernels or directional diffusion processes may provide texture operators capable of distinguishing orientation as well as scale. The corresponding functional framework remains to be developed.
			
			\paragraph{Adaptive multiscale texture analysis.}
			
			In many applications the characteristic scale of the texture varies across the image. This suggests considering adaptive versions of the operator in which the observation scale depends on the spatial position or evolves according to local image features. The mathematical analysis of such nonlinear and spatially adaptive operators is still completely open.
			
			\paragraph{Variational models based on texture energies.}
			
			One of the motivations for introducing a rigorous functional framework is the construction of variational models in which texture persistence plays the role traditionally assigned to gradients or wavelet coefficients. The texture norms introduced in this work provide a natural starting point for studying regularization, inverse problems and multiscale image decomposition driven directly by texture.
			
			\paragraph{Evolution equations driven by texture fields.}
			
			Another promising direction consists in studying partial differential equations whose coefficients are determined by texture fields extracted from a reference image. Examples include diffusion operators of the form
			\begin{equation*}
				-\nabla\!\cdot(D(T)\nabla u),
			\end{equation*}
			or reaction--diffusion systems evolving on media whose heterogeneity is encoded by a multiscale texture field $T$. Understanding how the geometry of the texture influences localization phenomena, spectral properties and pattern formation appears to be a particularly interesting direction for future research.
			
			From a broader perspective, the present work shows that texture persistence under Gaussian scale evolution is not only an effective descriptor for image analysis, but also gives rise naturally to classical structures of harmonic analysis and function space theory.
	
		\subsection{Perspectives for applications}
		
			Although the present work is primarily theoretical, its motivation originates in multiscale texture analysis and image processing. The analytical framework developed here provides a rigorous mathematical basis for future models in which texture persistence plays a central role.
			
			A first natural direction concerns variational methods. Since the texture operator measures the persistence of oscillatory structures across scales, the associated multiscale texture norms provide an alternative to classical gradient- or wavelet-based regularization terms. This opens the possibility of developing new variational models for image restoration, denoising, segmentation and decomposition in which texture, rather than intensity or gradient magnitude, becomes the primary quantity being controlled.
			
			The $r$-adic difference-of-Gaussians decomposition also leads to an efficient computational implementation. The texture coefficients are obtained by Gaussian convolutions at geometrically distributed scales, making the resulting multiscale representation naturally compatible with fast FFT-based algorithms and standard Gaussian pyramids. Consequently, the proposed framework may be incorporated into practical texture analysis pipelines without requiring fundamentally new numerical techniques.
			
			Beyond image processing, the notion of texture persistence may also prove useful in the analysis of multiscale spatial data arising in other applications. Examples include biomedical imaging, remote sensing, materials science and biological pattern analysis, where oscillatory structures often carry relevant geometric information across several spatial scales.
			
			Finally, the functional framework introduced in this paper provides a natural starting point for the development of new mathematical models driven by texture rather than by derivatives alone. We expect that the combination of multiscale texture operators, variational methods and partial differential equations will constitute a promising direction for future research, both from the theoretical and the applied points of view.

	\bibliographystyle{amsplain}

\end{document}